\documentclass[draft]{birkjour}
\usepackage[noadjust]{cite}
\usepackage{xcolor}
\usepackage{mathrsfs}

\usepackage{amsmath}
\usepackage{amssymb}
\usepackage{amsthm}
\usepackage{extarrows}
\RequirePackage[all]{xy}

\newtheorem{theorem}{Theorem}[section]
\newtheorem{corollary}[theorem]{Corollary}
\newtheorem{lemma}[theorem]{Lemma}
\newtheorem{proposition}[theorem]{Proposition}
\theoremstyle{definition}
\newtheorem{definition}[theorem]{Definition}
\theoremstyle{remark}
\newtheorem*{remark}{Remark}

\numberwithin{equation}{section}

\newcommand{\BibTeX}{B\kern-0.1emi\kern-0.017emb\kern-0.15em\TeX}
\newcommand{\XYpic}{$\mathrm{X\kern-0.3em\raisebox{-0.18em}{Y}}$-$\mathrm{pic}\,$}

\newcommand{\cl}{C \kern -0.1em \ell}  

\newcommand{\be}{\begin{eqnarray*}}
	\newcommand{\ee}{\end{eqnarray*}}

\allowdisplaybreaks

\newcommand{\ed}{\end{document}}
\begin{document}

%
%
%
%
%
%
%
%
%

\title[Integral formulas and Hodge decomposition]
{Integral formulas and Hodge decomposition in the theory of generalized partial-slice mo-nogenic functions}
\author[Manjie Hu]{Manjie Hu}
\address{%
	School of Mathematical Sciences,\\ Anhui University, Hefei, P.R. China}
\email{A23201033@stu.ahu.edu.cn}

\author[Chao Ding]{Chao Ding}
%
\address{%
	Center for Pure Mathematics, \\School of Mathematical Sciences,\\ Anhui University, Hefei, P.R. China}
\email{cding@ahu.edu.cn}

%
\subjclass{30G35, 32A30, 44A05.}
\keywords{Generalized partial-slice monogenic functions, Teodorescu transform, Cauchy-Pompeiu integral formula, Cauchy integral formula, Hodge decomposition}
\date{\today}
\begin{abstract}
	This paper explores generalized slice monogenic functions by introducing their operator symbols, representation formula, and integral formula. The study extends the Teodorescu transform to a broader class of theorems and inferences, providing new analytical tools for function theory in this setting. Additionally, the Hodge decomposition is established, providing a foundation for further research.
	
\end{abstract}
\label{page:firstblob}
\maketitle
\section{Introduction}
The study of function theories over Clifford algebras has been a rich and evolving field with significant implications in both pure and applied mathematics, including mathematical physics, engineering, and differential equations \cite{Di2,Br1}. Over the last century, Clifford analysis has become an important tool for solving differential equations and boundary value problems. This includes monogenic functions, which are solutions to the Weyl or Dirac systems, defined in $\mathbb{R}^{n+1} $ or $ \mathbb{R}^n $ with values in the real Clifford algebra $ \mathbb{R}_n $. These functions, linked to a generalized Cauchy–Riemann operator, extend holomorphic functions and have been studied since the late nineteenth century. For more details, see \cite{Co8,Ji}.
\par
This paper is motivated by recent advances in the theory of slice monogenic functions, particularly the integral formula and representation theorem established in \cite{Co2,Co1,Fu,Ge3}.  Additionally, global differential operators introduced in \cite{Gh2,Co3} have contributed to the study of slice regularity, while conformal invariance properties discussed in \cite{Ry} provide insight into the behavior of these functions under M\"obius transformations.
\par
The concept of slice functions, introduced by Gentili and Struppa \cite{Ge1,Ge2} in the early 2000s, differs from classical Fueter regularity in quaternionic analysis. Defined through power series expansions and integral representations, this approach extends holomorphicity to quaternionic, octonionic, and Clifford algebras. This generalization has led to the concept of slice monogenic functions \cite{Co2,Co5}, which share important properties of classical holomorphic functions, such as Cauchy integral formula and power series representations, while adapting to the non-commutativity and non-associativity of higher-dimensional algebras.
\par
Building on this foundation, Colombo, Sabadini, and Struppa formulated an alternative perspective on the theory of slice monogenic functions, enabling structural decompositions that facilitate representation theorem, integral formula, and functional calculus techniques. In \cite{Co6}, Colombo et al. also introduced a differential operator $G$ with non-constant coefficients, whose null solutions correspond to slice regular functions under certain conditions. This provides a direct way to define slice regular functions, which plays the same role as the Cauchy-Riemann operator does in complex analysis. Later, \cite{Xu2} introduced two such operators, $G_{\boldsymbol{x}}$ and $\overline{\vartheta }$, to study generalized partial-slice monogenic functions on partially symmetric domains.
\par
The integral over the boundary in the complex Cauchy–Pompeiu representation defines a weakly singular integral operator $T_{\Omega}$, which was studied by Vekua \cite{Ve}. In \cite{Zh1,Zh2}, the authors explained the Teodorescu transform as a solution to non-homogeneous Dirac equations in Clifford analysis. In \cite{Gu}, the authors introduces the Teodorescu transform systematically, and they particularly point out that $ T_{\Omega} $ acts like an left inverse operator for $\overline{\partial }$ when acting on functions with compact support. Further, this operator helps solve boundary value problems, create integral representations of monogenic functions, and extend function theories to generalized spaces like slice monogenic and polymonogenic functions. The purpose of this article is to investigate the Teodorescu transform in the theory of slice monogenic functions. Some mapping properties of the Teodorescu transform are also studied, and Hodge decomposition is given as an application.
\par
This paper is organized as follows. Section 2 reviews the fundamentals of Clifford algebras and introduces monogenic and slice monogenic functions. Section 3 introduces operator notations and important theorems on generalized slice monogenic functions. In Section 4, we use the integral formula of the generalized partial-slice monogenic function to estimate the norm of the Teodorescu transform. Section 5 presents the Hodge decomposition, which plays a important role in our research.

\section{Preliminaries}
This section provides a refined overview of Clifford algebras and the theories of monogenic and slice monogenic functions. For comprehensive details, refer to \cite{Br2,Co4,De,Ge1}.
\subsection{Clifford algebras}
Let $\{e_1, e_2, \cdots, e_n\}$ be an orthonormal basis of the $n$-dimensional real Euclidean space, denoted by $\mathbb{R}^n$. The associated Clifford algebra $\mathbb{R}_n$ is generated by this basis with the relationship
\begin{align*}
	e_i e_j + e_j e_i = -2 \delta_{ij}, \quad 1 \leq i, j \leq n,
\end{align*}
where $\delta_{ij}$ represents the Kronecker symbol. Any element $a \in \mathbb{R}_n$ is then expressible as
\begin{align*}
	a = \sum_A a_A e_A, \quad a_A \in \mathbb{R},
\end{align*}
where $e_A = e_{j_1} e_{j_2} \cdots e_{j_r}$ for index sets $A = \{j_1, j_2, \cdots, j_r\} \subseteq \{1, 2, \cdots, n\}$, with $1 \leqslant j_1 < j_2 < \cdots < j_r \leqslant n$, and $e_{\emptyset} = e_0 = 1$. Therefore, one can see that the Clifford algebra $\mathbb{R}_n$, considered as a real vector space, has dimension $2^n$.
\par
For each integer $k = 0, 1, \cdots, n$, the real linear subspace of $\mathbb{R}_n$, denoted $\mathbb{R}_n^k$, consists of $k$-vectors, which are generated by $\binom{n}{k}$ elements of the form
\begin{align*}
	e_A = e_{i_1} e_{i_2} \cdots e_{i_k}, \quad 1 \leqslant i_1 < i_2 < \cdots < i_k \leqslant n.
\end{align*}
The scalar part of $a$ is denoted by $[a]_0 = a_{\emptyset}$. The paravectors, a important subset of Clifford numbers $\mathbb{R}_n$, are formed by elements in $\mathbb{R}_n^0 \oplus \mathbb{R}_n^1$. This subset is mapped to $\mathbb{R}^{n+1}$ with the following mapping
\begin{align*}
	(x_0, x_1, \cdots, x_n) \mapsto x = x_0 + \underline{x} =x_0+ \sum_{i=1}^n e_i x_i.
\end{align*}

Next, we define two important involutions in the Clifford algebra $\mathbb{R}_n$ as follows. The first involution is called the \textbf{Clifford conjugation}. For any element $a = \sum_A a_A e_A \in \mathbb{R}_n$, its Clifford conjugate, denoted by $\overline{a}$, is defined as
\begin{align*}
	\overline{a} = \sum_A a_A \overline{e_A},
\end{align*}
where $\overline{e_{j_1} \cdots e_{j_r}} = \overline{e_{j_1}} \cdots \overline{e_{j_r}}$ and $\overline{e_j} = -e_j$ for $1 \leqslant j \leqslant n$, while $\overline{e_0} = e_0 = 1$.

The second involution is the \textbf{Clifford reversion}, which is defined for an element $a = \sum_A a_A e_A \in \mathbb{R}_n$ as
\begin{align*}
	\tilde{a} = \sum_A a_A \widetilde{e_A},
\end{align*}
where $\widetilde{e_{j_1} \cdots e_{j_r}} = e_{j_r} \cdots e_{j_1}$, and for any $a, b \in \mathbb{R}_n$, we have $\widetilde{ab} = \tilde{b} \tilde{a}$. This means that the reversion operation reverses the order of multiplication.
\par
The norm of an element $a \in \mathbb{R}_n$ is defined as $\left| a \right| = [a \overline{a}]_0 = \left( \sum_A |a_A|^2 \right)^{\frac{1}{2}}$. For a paravector $x \neq 0$, its norm is $\left| x \right| = (x \bar{x})^{\frac{1}{2}}$, and its inverse is given by $x^{-1} = \bar{x} \left| x \right|^{-2}$.

\subsection{Monogenic and slice monogenic functions}
Now, we introduce important concepts and notations associated with monogenic and slice monogenic functions. For further details, see \cite{Br2,Co4,De,Ge1}.

\begin{definition}[Monogenic functions]
	Let $\Omega \subset \mathbb{R}^{n+1}$ be a domain, and $f: \Omega \longrightarrow \mathbb{R}_n$ is a differentiable Clifford-valued function. The function $f$ is called \emph{left monogenic} on $\Omega$ if it satisfies the generalized Cauchy-Riemann equation
	\begin{align*}
		Df(x) = \sum_{i=0}^n e_i \frac{\partial f}{\partial x_i}(x) = 0, \quad \text{for all } x \in \Omega.
	\end{align*}
	Since the multiplications of Clifford numbers are not commutative in general, there is a similar definition for \emph{right monogenic} in $\Omega$.
	\end{definition}
	 The operator
\begin{align*}
	D = \sum_{i=0}^n e_i \frac{\partial}{\partial x_i} = \sum_{i=0}^n e_i \partial_{x_i} = \partial_{x_0} + \partial_{\underline{x}},
\end{align*}
where $D$ is the generalized Cauchy-Riemann operator (or Weyl operator), and $\partial_{\underline{x}}$ represents the classical Dirac operator in $\mathbb{R}^{n}$. 
\par
A central observation is that any non-real paravector in $\mathbb{R}^{n+1}$ can be written as
\begin{align*}
	x = x_0 + \sum_{i=1}^n x_i e_i = x_0 + \underline{x} = x_0 + r \omega,
\end{align*}
where $r = |\underline{x}| = \left( \sum_{i=1}^n x_i^2 \right)^{1/2}$ and $\omega = \frac{\underline{x}}{|\underline{x}|}$ is a uniquely defined unit vector resembling the classical imaginary unit. This implies that
\begin{align*}
	\omega \in \mathbb{S}^{n-1} = \left\{ x \in \mathbb{R}^{n+1}: x^2 = -1 \right\}.
\end{align*}
When $x$ is real, $r = 0$, and for every $\omega \in \mathbb{S}^{n-1}$, we have $x = x + \omega \cdot 0$.

\begin{definition}[Slice monogenic functions] 
	Let $\Omega \subset \mathbb{R}^{n+1}$ be a domain, and $f: \Omega \to \mathbb{R}_n$. The function $f$ is called \emph{(left) slice monogenic} if, for each direction $\omega \in \mathbb{S}^{n-1}$, the restriction of $f$ to the subset $\Omega_{\omega} = \Omega \cap (\mathbb{R} \oplus \omega \mathbb{R}) \subseteq \mathbb{R}^2$, denoted by $f_{\omega}$, is holomorphic. This means that $f_{\omega}$ has continuous partial derivatives and satisfies the following condition
	\begin{align*}
		\frac{1}{2}\left( \partial_{x_0} + \omega \partial_r \right) f_{\omega}(x_0 + r \omega) = 0
	\end{align*}
	for any $x_0 + r \omega \in \Omega_{\omega}$.
\end{definition}
\par
Another approach to defining slice monogenic functions, introduced by Ghiloni and Perotti \cite{Gh1} in 2011, uses the concept of stem functions. This framework has been widely explored in studies of slice monogenic functions, such as \cite{Di1, Gh2, Pe}. In the following section, we will review this approach for the theory of generalized partial-slice monogenic functions.

\section{Generalized partial-slice monogenic functions}
In \cite{Xu1,Xu2}, the authors introduced the concept of generalized partial-slice monogenic functions, extending the theory of slice monogenic functions. This new framework generalizes both the classical Clifford analysis and the theory of slice monogenic functions. In this section, we review some needed definitions and properties for the rest of the article. For a comprehensive discussion, see \cite{Xu2,Xu1}.
\par
Let $p$ and $q$ be non-negative integers, with $p,q > 0$. We consider functions $f : \Omega \to \mathbb{R}_{p+q}$, where $\Omega \subset \mathbb{R}^{p+q+1}$ is a domain. An element $\boldsymbol{x} \in \mathbb{R}^{p+q+1} = \mathbb{R}^{p+1} \oplus \mathbb{R}^q$ can be represented as a paravector in $\mathbb{R}_{p+q}$ as
\begin{align*}
	\boldsymbol{x} = \boldsymbol{x}_p + \underline{\boldsymbol{x}}_q \in \mathbb{R}^{p+1} \oplus \mathbb{R}^q, \quad \boldsymbol{x}_p = \sum_{i=0}^p x_i e_i, \quad \underline{\boldsymbol{x}}_q = \sum_{i=p+1}^{p+q} x_i e_i.
\end{align*}
Here, we define the generalized Cauchy-Riemann operator and the Euler operator as 
\begin{align*}
	D_{\boldsymbol{x}} &= \sum_{i=0}^{p+q} e_i \partial_{x_i} = \sum_{i=0}^p e_i \partial_{x_i} + \sum_{i=p+1}^{p+q} e_i \partial_{x_i} = D_{\boldsymbol{x}_p} + D_{\underline{\boldsymbol{x}}_q}, \\
	\mathbb{E}_{\boldsymbol{x}} &= \sum_{i=0}^{p+q} x_i \partial_{x_i} = \sum_{i=0}^p x_i \partial_{x_i} + \sum_{i=p+1}^{p+q} x_i \partial_{x_i} = \mathbb{E}_{\boldsymbol{x}_p} + \mathbb{E}_{\underline{\boldsymbol{x}}_q}.
\end{align*}
In these expressions, the operators $D_{\boldsymbol{x}_p}$ and $D_{\underline{\boldsymbol{x}}_q}$ act on the components $\boldsymbol{x}_p$ and $\underline{\boldsymbol{x}}_q$, respectively, and similarly for the Euler operators.

We denote the unit sphere in $\mathbb{R}^q$ by $\mathbb{S}$, consisting of elements $\underline{\boldsymbol{x}}_q = \sum_{i=p+1}^{p+q} x_i e_i$ that satisfy the condition
\begin{align*}
	\mathbb{S} = \left\{ \underline{\boldsymbol{x}}_q : {\underline{\boldsymbol{x}}_q}^2 = -1 \right\} = \left\{ \underline{\boldsymbol{x}}_q = \sum_{i=p+1}^{p+q} x_i e_i : \sum_{i=p+1}^{p+q} x_i^2 = 1 \right\}.
\end{align*}
For any non-zero vector $\underline{\boldsymbol{x}}_q$, there exists a unique $r \in \mathbb{R}^+$ and a unit vector $\underline{\omega} \in \mathbb{S}$ such that the vector $\underline{\boldsymbol{x}}_q$ can be written as
\begin{align*}
	\underline{\boldsymbol{x}}_q = r \underline{\omega}, \quad \text{where} \quad r = |\underline{\boldsymbol{x}}_q|, \quad \underline{\omega} = \frac{\underline{\boldsymbol{x}}_q}{|\underline{\boldsymbol{x}}_q|}.
\end{align*}
If $\underline{\boldsymbol{x}}_q = 0$, we define $r = 0$ and $\underline{\omega}$ is undefined. In this case, $\underline{\boldsymbol{x}}_q$ is the zero vector, which can be written as $\boldsymbol{x}_p + \underline{\omega} \cdot 0$ for any $\underline{\omega} \in \mathbb{S}$.

For an open set $\Omega \subset \mathbb{R}^{p+q+1}$, we introduce the notation
\begin{align*}
	\Omega_{\underline{\omega}} := \Omega \cap \left( \mathbb{R}^{p+1} \oplus \underline{\omega} \mathbb{R} \right) \subseteq \mathbb{R}^{p+2},
\end{align*}
which denotes the intersection of $\Omega$ with the subspace of $\mathbb{R}^{p+2}$ spanned by $\mathbb{R}^{p+1}$ and the line through the origin in the direction of $\underline{\omega}$. To develop the theory of generalized partial-slice monogenic functions, additional constraints on the domains are necessary.

\begin{definition}
	Let $\Omega$ be a domain in $\mathbb{R}^{p+q+1}$. The domain $\Omega$ is said to be \emph{partially symmetric} with respect to $\mathbb{R}^{p+1}$ (or \emph{$p$-symmetric} for short) if, for any $\boldsymbol{x}_p \in \mathbb{R}^{p+1}$, $r \in \mathbb{R}^+$, and $\underline{\omega} \in \mathbb{S}$, the following condition holds
	\begin{align*}
		\boldsymbol{x} = \boldsymbol{x}_p + r \underline{\omega} \in \Omega \quad \Longrightarrow \quad \left[ \boldsymbol{x} \right] := \boldsymbol{x}_p + r \mathbb{S} = \left\{ \boldsymbol{x}_p + r \underline{\omega} : \underline{\omega} \in \mathbb{S} \right\} \subseteq \Omega.
	\end{align*}
\end{definition}
Next, we introduce the concept of stem functions, which are fundamental to the study of generalized partial-slice functions.
 
\begin{definition}
	Let $F: D \longrightarrow \mathbb{R}_{p+q} \otimes_{\mathbb{R}} \mathbb{C}$ be a function defined on an open set $D \subseteq \mathbb{R}^{p+2}$. We say that $F$ is  \emph{stem function} if it is invariant under the reflection of the $(p+2)$-th variable, and its $\mathbb{R}_{p+q}$-valued components, $F_1$ and $F_2$ (where $F = F_1 + iF_2$), satisfy the following conditions
	\begin{align}\label{Stem Function Costituent}
		F_1(\boldsymbol{x}_p, -r) = F_1(\boldsymbol{x}_p, r), \quad F_2(\boldsymbol{x}_p, -r) = -F_2(\boldsymbol{x}_p, r), \quad (\boldsymbol{x}_p, r) \in D.
	\end{align}
	Given a stem function $F$, it induces a (left) generalized partial-slice function $f = \mathcal{I}(F)$ from $\Omega_D$ to $\mathbb{R}_{p+q}$, defined as
	\begin{align*}
		f(\boldsymbol{x}) := F_1(\boldsymbol{x}') + \underline{\omega} F_2(\boldsymbol{x}'), \quad \boldsymbol{x}' = (\boldsymbol{x}_p, r), \quad \boldsymbol{x} = \boldsymbol{x}_p + r \underline{\omega} \in \mathbb{R}^{p+q+1}, \quad \underline{\omega} \in \mathbb{S}.
	\end{align*}
\end{definition}
\begin{definition}
	Let $D \subseteq \mathbb{R}^{p+2}$ be a domain that is invariant under the reflection of the $(p+2)$-th variable. The \emph{p-symmetric completion} of $D$, denoted by $\Omega_D \subset \mathbb{R}^{p+q}$, is defined as
	\begin{align*}
		\Omega_D = \bigcup_{\underline{\omega} \in \mathbb{S}} \left\{ x_p + r \underline{\omega} : \exists x_p \in \mathbb{R}^{p+1}, r \geq 0, (x_p, r) \in D \right\}.
	\end{align*}
\end{definition}

It is important to note that a domain $\Omega \subset \mathbb{R}^{p+q+1}$ is $p$-symmetric if and only if there exists a corresponding domain $D \subset \mathbb{R}^{p+2}$ such that $\Omega = \Omega_D$. Throughout this paper, we will use $\Omega_D$ to refer to a $p$-symmetric domain in $\mathbb{R}^{p+q+1}$.
\par
We define the set of all generalized partial-slice functions induced on $\Omega_D$ as
$$ \mathcal{G}\mathcal{S}(\Omega_D) := \left\{ f = \mathcal{I}(F) : F \text{ is a stem function on } D \right\}. $$
\begin{definition}
	Let $f \in \mathcal{G}\mathcal{S}(\Omega_D)$. We say that $f$ is \emph{generalized partial-slice monogenic of type $(p,q)$} if its corresponding stem function $F = F_1 + iF_2$ satisfies the generalized Cauchy-Riemann equations
	$$
	\begin{cases}  
		D_{\boldsymbol{x}_p} F_1 - \partial_r F_2 = 0, \\  
		\overline{D_{\boldsymbol{x}_p}} F_2 - \partial_r F_1 = 0.
	\end{cases}
	$$
\end{definition}
Similar to the case of slice functions, there exists a representation formula for generalized partial-slice functions of type $(p,q)$, as described below.
\begin{theorem}[Representation Formula]\label{*Representation Formula*}
	\cite{Xu2} Let $f\in \mathcal{G} \mathcal{S} \left( \Omega _D \right)$. Then it holds that, for every $\boldsymbol{x}=\boldsymbol{x}_p+r\underline{\omega}\in \Omega _D$ with $\underline{\omega}\in \mathbb{S}$,
	\begin{align}
		f\left( \boldsymbol{x} \right)
		=&\left( \underline{\omega}-\underline{\omega}_2 \right) \left( \underline{\omega}_1-\underline{\omega}_2 \right) ^{-1}f\left( \boldsymbol{x}_p+r\underline{\omega}_1 \right)\nonumber\\
& -\left( \underline{\omega}-\underline{\omega}_1 \right) \left( \underline{\omega}_1-\underline{\omega}_2 \right) ^{-1}f\left( \boldsymbol{x}_p+r\underline{\omega}_2 \right) ,
	\end{align}
	for all $\underline{\omega}_1\ne \underline{\omega}_2\in \mathbb{S} $. In particular, if $\underline{\omega}_1=-\underline{\omega}_2 =\underline{\eta}\in \mathbb{S} $, we have
	\begin{align*}
		f\left( \boldsymbol{x} \right) &=\frac{1}{2}\left( 1-\underline{\omega}\underline{\eta} \right) f\left( \boldsymbol{x}_p+r\underline{\eta} \right) +\frac{1}{2}\left( 1+\underline{\omega}\underline{\eta} \right) f\left( \boldsymbol{x}_p-r\underline{\eta} \right)\\
		&=\frac{1}{2}\left( f\left( \boldsymbol{x}_p+r\underline{\eta} \right) +f\left( \boldsymbol{x}_p-r\underline{\eta} \right) \right) +\frac{1}{2}\underline{\omega}\underline{\eta}\left( f\left( \boldsymbol{x}_p-r\underline{\eta} \right) -f\left( \boldsymbol{x}_p+r\underline{\eta} \right) \right) .
	\end{align*}
\end{theorem}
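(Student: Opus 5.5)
The plan is to work directly from the definition of $\mathcal{G}\mathcal{S}(\Omega_D)$. Write $f=\mathcal{I}(F)$ with $F=F_1+iF_2$ a stem function on $D$. For $\boldsymbol{x}=\boldsymbol{x}_p+r\underline{\omega}$ and any $\underline{\omega}_j\in\mathbb{S}$, set $\boldsymbol{x}'=(\boldsymbol{x}_p,r)$. Then
\begin{align*}
f(\boldsymbol{x}_p+r\underline{\omega}_j)=F_1(\boldsymbol{x}')+\underline{\omega}_jF_2(\boldsymbol{x}'),\quad j=1,2 ,
\end{align*}
and $f(\boldsymbol{x})=F_1(\boldsymbol{x}')+\underline{\omega}F_2(\boldsymbol{x}')$. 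All three points lie in $\Omega_D$ by $p$-symmetry. The key observation is that the same $F_1(\boldsymbol{x}')$ and $F_2(\boldsymbol{x}')$ appear in all three values. At $r=0$ the parity conditions (\ref{Stem Function Costituent}) give $F_2(\boldsymbol{x}_p,0)=0$, so $f$ is well defined there and the formula holds trivially because the coefficients sum to $1$.

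First I will solve the $2\times 2$ linear system for $F_1,F_2$. Subtracting the two equations gives $(\underline{\omega}_1-\underline{\omega}_2)F_2=f(\boldsymbol{x}_p+r\underline{\omega}_1)-f(\boldsymbol{x}_p+r\underline{\omega}_2)$. For $\underline{\omega}_1\neq\underline{\omega}_2$ the vector $\underline{\omega}_1-\underline{\omega}_2$ is a nonzero vector in $\mathbb{R}^q$, hence invertible in $\mathbb{R}_{p+q}$ with inverse $\overline{(\underline{\omega}_1-\underline{\omega}_2)}\,|\underline{\omega}_1-\underline{\omega}_2|^{-2}$. Multiplying on the left gives $F_2=(\underline{\omega}_1-\underline{\omega}_2)^{-1}[f_1-f_2]$, where $f_j=f(\boldsymbol{x}_p+r\underline{\omega}_j)$. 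Substituting back gives
\begin{align*}
F_1=f_1-\underline{\omega}_1(\underline{\omega}_1-\underline{\omega}_2)^{-1}(f_1-f_2).
\end{align*}

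Next I substitute these into $f(\boldsymbol{x})=F_1+\underline{\omega}F_2$. This yields
\begin{align*}
f(\boldsymbol{x})=f_1+(\underline{\omega}-\underline{\omega}_1)(\underline{\omega}_1-\underline{\omega}_2)^{-1}(f_1-f_2).
\end{align*}
Collecting the coefficient of $f_1$ gives $1+(\underline{\omega}-\underline{\omega}_1)(\underline{\omega}_1-\underline{\omega}_2)^{-1}$. Writing $1=(\underline{\omega}_1-\underline{\omega}_2)(\underline{\omega}_1-\underline{\omega}_2)^{-1}$, this coefficient becomes $(\underline{\omega}-\underline{\omega}_2)(\underline{\omega}_1-\underline{\omega}_2)^{-1}$. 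The coefficient of $f_2$ is $-(\underline{\omega}-\underline{\omega}_1)(\underline{\omega}_1-\underline{\omega}_2)^{-1}$, which is exactly the stated formula.

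The main point needing care is noncommutativity. Every coefficient must multiply from the left, which works because $f$ is a left slice function with the imaginary unit on the left of $F_2$. The inverse must also be placed consistently, left of the differences $f_1-f_2$.

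For the special case, set $\underline{\omega}_1=\underline{\eta}$ and $\underline{\omega}_2=-\underline{\eta}$. Then $(2\underline{\eta})^{-1}=-\underline{\eta}/2$ since $\underline{\eta}^2=-1$. The coefficient of $f_1$ becomes $(\underline{\omega}+\underline{\eta})(-\underline{\eta}/2)=\tfrac12(1-\underline{\omega}\underline{\eta})$. The coefficient of $f_2$ becomes $-(\underline{\omega}-\underline{\eta})(-\underline{\eta}/2)=\tfrac12(1+\underline{\omega}\underline{\eta})$. Regrouping the terms gives the second displayed form. This is a routine computation; the only real step is the inversion of $\underline{\omega}_1-\underline{\omega}_2$.
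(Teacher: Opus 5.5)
Your proof is correct. Solving $f(\boldsymbol{x}_p+r\underline{\omega}_j)=F_1(\boldsymbol{x}')+\underline{\omega}_jF_2(\boldsymbol{x}')$, $j=1,2$, for $F_2$ and then $F_1$, using left multiplication by the inverse of the nonzero vector $\underline{\omega}_1-\underline{\omega}_2$, and substituting into $F_1+\underline{\omega}F_2$ yields exactly the stated coefficients; your special case with $(2\underline{\eta})^{-1}=-\underline{\eta}/2$ also checks out. The paper does not prove this theorem and only cites \cite{Xu2}; your direct stem-function computation is the standard argument for the result.
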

Recall that the Cauchy kernel for monogenic functions in $\mathbb{R}^{p+2}$ is expressed as
\begin{align*}
	E(\boldsymbol{x}) = \frac{1}{\sigma_{p+1}} \frac{\overline{\boldsymbol{x}}}{|\boldsymbol{x}|^{p+2}}, \quad x \in \mathbb{R}^{p+2} \setminus \{0\},
\end{align*}
where $\sigma_{p+1} = 2 \frac{\Gamma^{p+2}\left( \frac{1}{2} \right)}{\Gamma \left( \frac{p+2}{2} \right)}$ represents the surface area of the unit sphere in $\mathbb{R}^{p+2}$. Given the representation formula for generalized partial-slice monogenic functions in Theorem \ref{*Representation Formula*}, we naturally define the generalized partial-slice Cauchy kernel as follows.
\begin{definition}
	For $\boldsymbol{y} \in \mathbb{R}^{p+q+1}$, the left generalized partial-slice Cauchy kernel $\mathcal{E}_{\boldsymbol{y}}(\cdot)$ is defined as
	\begin{eqnarray}
		\mathcal{E}_{\boldsymbol{y}}(\boldsymbol{x}) = \frac{1}{2} \left( 1 - \underline{\omega} \underline{\eta} \right) E_{\boldsymbol{y}}\left( \boldsymbol{x}_p + r \underline{\eta} \right) + \frac{1}{2} \left( 1 + \underline{\omega} \underline{\eta} \right) E_{\boldsymbol{y}}\left( \boldsymbol{x}_p - r \underline{\eta} \right),
	\end{eqnarray}
	where $\underline{\omega}$ and $\underline{\eta}$ are defined as in Theorem \ref{*Representation Formula*}.
\end{definition}
Using the Cauchy-Pompeiu formula for monogenic functions along with the representation formula given in Theorem \ref{*Representation Formula*}, Cauchy-Pompeiu formula for partial-slice monogenic functions was derived in \cite{Xu2}. More specifically,
\begin{theorem}[Cauchy-Pompeiu formula]\label{CPF} \cite{Xu2} Let $f=\mathcal{I} \left( F \right) \in \mathcal{G} \mathcal{S} \left( \Omega _D \right) $ with its stem function $F\in C^1\left( \overline{D} \right)$ and set $\Omega =\Omega _D$. If $U$ is a domain in $\mathbb{R}^{p+q+1}$ such that $U_{\underline{\eta}}\subset \Omega _{\underline{\eta}}$ is a bounded domain in $\mathbb{R}^{p+2}$ with smooth boundary $\partial U_{\underline{\eta}}\subset \Omega _{\underline{\eta}}$ for some $\underline{\eta}\in \mathbb{S}$, then for any $\boldsymbol{x}\in U$, we have
	\begin{eqnarray}
		f\left( \boldsymbol{x} \right) =\int_{\partial U_{\underline{\eta}}}{\mathcal{E} _{\boldsymbol{y}}\left( \boldsymbol{x} \right) n\left( \boldsymbol{y} \right) f\left( \boldsymbol{y} \right) dS_{\underline{\eta}} \left( \boldsymbol{y} \right)}-\int_{U_{\underline{\eta}}}{\mathcal{E} _{\boldsymbol{y}}\left( \boldsymbol{x} \right) \left( D_{\underline{\eta}}f \right) \left( \boldsymbol{y} \right) d\sigma  _{\underline{\eta}}\left( \boldsymbol{y} \right)},
	\end{eqnarray}
	where $n\left( \boldsymbol{y} \right) =\sum_{i=0}^p{n_i\left( \boldsymbol{y} \right) e_i}+n_{p+1}\left( \boldsymbol{y} \right) \underline{\eta}$ is the unit exterior normal vector to $\partial U_{\underline{\eta}}$ at $\boldsymbol{y}$, $dS_{\underline{\eta}} $ and $d\sigma  _{\underline{\eta}}$ stand for the classical Lebesgue surface element and volume element in $\mathbb{R}^{p+2}$, respectively.
\end{theorem}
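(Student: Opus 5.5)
The plan is to reduce the statement to the classical Cauchy--Pompeiu formula for monogenic functions on the single slice $\mathbb{R}^{p+1}\oplus\underline{\eta}\mathbb{R}\cong\mathbb{R}^{p+2}$, and then to transport the resulting identity to an arbitrary point of $U$ by means of Theorem \ref{*Representation Formula*}.

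\textbf{Step 1: the restriction is a classical $C^1$ function on the slice.} Fix $\underline{\eta}$ as in the statement. Identify $\mathbb{R}^{p+1}\oplus\underline{\eta}\mathbb{R}$ with $\mathbb{R}^{p+2}$ via $\boldsymbol{y}=\boldsymbol{y}_p+s\underline{\eta}$, $s\in\mathbb{R}$. The restriction is
\[
f_{\underline{\eta}}(\boldsymbol{y}_p+s\underline{\eta})=F_1(\boldsymbol{y}_p,s)+\underline{\eta}F_2(\boldsymbol{y}_p,s).
\]
This holds for $s<0$ as well, by the even/odd conditions \eqref{Stem Function Costituent}. Since $F\in C^1(\overline D)$, $f_{\underline{\eta}}$ is $C^1$ up to the boundary.

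\textbf{Step 2: classical Cauchy--Pompeiu on $U_{\underline{\eta}}$.} The elements $e_0,\dots,e_p,\underline{\eta}$ generate a copy of $\mathbb{R}_{p+1}$ inside $\mathbb{R}_{p+q}$. With respect to this copy, the Weyl operator on the slice is
\[
D_{\underline{\eta}}=\sum_{i=0}^p e_i\partial_{y_i}+\underline{\eta}\,\partial_s .
\]
The values of $f$ lie in $\mathbb{R}_{p+q}$, which is a left module over this subalgebra, so the classical Cauchy--Pompeiu formula in $\mathbb{R}^{p+2}$ still applies: it is proved componentwise with right-linear coefficients, via Stokes' theorem. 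For $\boldsymbol{z}\in U_{\underline{\eta}}$ this gives
\[
f(\boldsymbol{z})=\int_{\partial U_{\underline{\eta}}}E_{\boldsymbol{y}}(\boldsymbol{z})\,n(\boldsymbol{y})f(\boldsymbol{y})\,dS_{\underline{\eta}}(\boldsymbol{y})-\int_{U_{\underline{\eta}}}E_{\boldsymbol{y}}(\boldsymbol{z})\,(D_{\underline{\eta}}f)(\boldsymbol{y})\,d\sigma_{\underline{\eta}}(\boldsymbol{y}),
\]
where $E_{\boldsymbol{y}}(\boldsymbol{z})=E(\boldsymbol{y}-\boldsymbol{z})$, with the sign convention matching the paper's kernel. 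Here we use that $\underline{\eta}^2=-1$ and that $\underline{\eta}$ anticommutes with $e_1,\dots,e_p$, so the usual identity $E\,D^{\text{right}}=0$ and Stokes' theorem carry over.

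\textbf{Step 3: extension to $\boldsymbol{x}\in U$ by the representation formula.} Write $\boldsymbol{x}=\boldsymbol{x}_p+r\underline{\omega}$. Apply the formula of Step 2 at the two points $\boldsymbol{z}_\pm=\boldsymbol{x}_p\pm r\underline{\eta}$. Then combine the two identities using the special case $\underline{\omega}_1=-\underline{\omega}_2=\underline{\eta}$ of Theorem \ref{*Representation Formula*}.

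The coefficients $\tfrac12(1\mp\underline{\omega}\underline{\eta})$ multiply the two integrals on the left. They are constant in $\boldsymbol{y}$, so they pass inside the integrals, and the sum of the kernels becomes exactly $\mathcal{E}_{\boldsymbol{y}}(\boldsymbol{x})$ by its definition. This is legitimate because all kernels act on the left of $n f$ and of $D_{\underline{\eta}}f$, and the coefficients multiply on the left as well.

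\textbf{Main obstacle.} The delicate point is that both points $\boldsymbol{z}_\pm$ must lie in $U_{\underline{\eta}}$, where Step 2 is valid. This needs the reflection invariance $s\mapsto -s$ of $U_{\underline{\eta}}$, which is inherited from $p$-symmetry, or at least requires interpreting $U$ as $p$-symmetric with $\boldsymbol{x}\in U$ meaning $[\boldsymbol{x}]\subset U$. I would state this explicitly. The case $r=0$ is trivial, since then $\mathcal{E}_{\boldsymbol{y}}(\boldsymbol{x})=E_{\boldsymbol{y}}(\boldsymbol{x}_p)$.

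A secondary check is the module argument in Step 2, namely that non-commutativity with the remaining generators $e_{p+1},\dots,e_{p+q}$ orthogonal to $\underline{\eta}$ causes no trouble. It does not, because those generators appear only inside the values of $f$, on the right of the kernel and of $n$.
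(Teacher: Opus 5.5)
Your proposal is correct and follows the route the paper indicates. The paper gives no proof of its own: it cites Xu--Sabadini and says only that the formula comes from the classical Cauchy--Pompeiu formula on the slice $\mathbb{R}^{p+1}\oplus\underline{\eta}\mathbb{R}$, combined with the representation formula applied at $\boldsymbol{x}_p\pm r\underline{\eta}$. Your observation that $U$ must be $p$-symmetric, so that both $\boldsymbol{x}_p\pm r\underline{\eta}$ lie in $U_{\underline{\eta}}$, is a genuine hypothesis the statement leaves implicit. Without it the identity can fail, so stating it explicitly is correct.
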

\par
Additionally, the global differential operator with non-constant coefficients for a $C^1$ function $f : \Omega \longrightarrow \mathbb{R}_{p+q}$ is given by
\begin{align*}
	\bar{\vartheta}f(\boldsymbol{x}) = D_{\boldsymbol{x}_p} f(\boldsymbol{x}) + \frac{\underline{\boldsymbol{x}}_q}{|\underline{\boldsymbol{x}}_q|^2} \mathbb{E}_{\underline{\boldsymbol{x}}_q} f(\boldsymbol{x}).
\end{align*}
Due to the singularities arising from the term $|\underline{\boldsymbol{x}}_q|^2$ in the operator, a more careful treatment is required. Hence, for the remainder of this paper, we introduce the notation $\mathbb{R}_*^{p+q+1} := \mathbb{R}^{p+q+1} \setminus \mathbb{R}^{p+1}$.
\par
A straightforward calculation leads to the following results.
\begin{proposition}\cite{Xu2}
	Let $\Omega$ be a domain in $\mathbb{R} _{*}^{p+q+1}$. For the $C^1$ function $f:\Omega \longrightarrow \mathbb{R} _{p+q}$, it holds that
	\begin{enumerate}
		\item $\bar{\vartheta}f\left( \boldsymbol{x} \right) =D_{\underline{\omega}}f\left( \boldsymbol{x} \right),\ \boldsymbol{x}=\boldsymbol{x}_p+r\underline{\omega}$,
		\item $f\in ker\bar{\vartheta}\Leftrightarrow f\in kerG_{\boldsymbol{x}}$.
	\end{enumerate}
\end{proposition}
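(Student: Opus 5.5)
The plan is to verify both items by a direct computation in the coordinates $\boldsymbol{x}=\boldsymbol{x}_p+r\underline{\omega}$. These coordinates are legitimate on $\Omega\subset\mathbb{R}_*^{p+q+1}$ because there $r=|\underline{\boldsymbol{x}}_q|>0$, so $\underline{\omega}=\underline{\boldsymbol{x}}_q/|\underline{\boldsymbol{x}}_q|\in\mathbb{S}$ is well defined and depends smoothly on $\boldsymbol{x}$. I read $D_{\underline{\omega}}$ as the Cauchy–Riemann operator on the slice $\mathbb{R}^{p+1}\oplus\underline{\omega}\mathbb{R}$, namely $D_{\underline{\omega}}=D_{\boldsymbol{x}_p}+\underline{\omega}\,\partial_r$. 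Here $\partial_r$ is the derivative of $r\mapsto f(\boldsymbol{x}_p+r\underline{\omega})$ with $\underline{\omega}$ fixed. I take $G_{\boldsymbol{x}}$ to be the operator of \cite{Xu2}, $G_{\boldsymbol{x}}=|\underline{\boldsymbol{x}}_q|^2D_{\boldsymbol{x}_p}+\underline{\boldsymbol{x}}_q\,\mathbb{E}_{\underline{\boldsymbol{x}}_q}$.

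For item (1), the key step is to identify the Euler operator in the $q$-variables with a radial derivative. Fix $\boldsymbol{x}=\boldsymbol{x}_p+r\underline{\omega}$ and set $g(t)=f(\boldsymbol{x}_p+t\underline{\boldsymbol{x}}_q)$. Since $f\in C^1$, the chain rule gives
\begin{align*}
g'(1)=\sum_{i=p+1}^{p+q}x_i\,\partial_{x_i}f(\boldsymbol{x})=\mathbb{E}_{\underline{\boldsymbol{x}}_q}f(\boldsymbol{x}).
\end{align*}
On the other hand, $\boldsymbol{x}_p+t\underline{\boldsymbol{x}}_q=\boldsymbol{x}_p+(tr)\underline{\omega}$. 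Hence $g'(1)=r\,\partial_rf(\boldsymbol{x}_p+r\underline{\omega})$, that is, $\mathbb{E}_{\underline{\boldsymbol{x}}_q}=r\partial_r$. Substituting this into the definition of $\bar{\vartheta}$ and using $\underline{\boldsymbol{x}}_q/|\underline{\boldsymbol{x}}_q|^2=r\underline{\omega}/r^2$ gives
\begin{align*}
\bar{\vartheta}f(\boldsymbol{x})=D_{\boldsymbol{x}_p}f(\boldsymbol{x})+\frac{r\underline{\omega}}{r^2}\,r\,\partial_rf(\boldsymbol{x})=\big(D_{\boldsymbol{x}_p}+\underline{\omega}\partial_r\big)f(\boldsymbol{x})=D_{\underline{\omega}}f(\boldsymbol{x}).
\end{align*}
This is item (1). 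The Clifford coefficient $\underline{\omega}$ multiplies from the left and commutes with the scalar operator $\partial_r$, so no ordering issue arises.

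For item (2), I would observe directly from the definitions that $G_{\boldsymbol{x}}f=|\underline{\boldsymbol{x}}_q|^2\,\bar{\vartheta}f$. Indeed, $|\underline{\boldsymbol{x}}_q|^2$ is a real scalar, so it commutes with everything, and $|\underline{\boldsymbol{x}}_q|^2\cdot\underline{\boldsymbol{x}}_q|\underline{\boldsymbol{x}}_q|^{-2}=\underline{\boldsymbol{x}}_q$. Since $|\underline{\boldsymbol{x}}_q|^2>0$ at every point of $\Omega\subset\mathbb{R}_*^{p+q+1}$, we get $\bar{\vartheta}f(\boldsymbol{x})=0$ if and only if $G_{\boldsymbol{x}}f(\boldsymbol{x})=0$, pointwise on $\Omega$. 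This gives $\ker\bar{\vartheta}=\ker G_{\boldsymbol{x}}$.

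The only step needing care is the identification $\mathbb{E}_{\underline{\boldsymbol{x}}_q}=r\partial_r$. It relies on $r>0$, which is exactly why the statement is restricted to $\mathbb{R}_*^{p+q+1}$. It also relies on interpreting $\partial_r$ with $\underline{\omega}$ held fixed, and the one-variable curve $t\mapsto\boldsymbol{x}_p+t\underline{\boldsymbol{x}}_q$ makes this rigorous for any $C^1$ function $f$. No slice structure of $f$ is needed.
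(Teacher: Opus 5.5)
Your proof is correct, and it is the ``straightforward calculation'' the paper refers to (the paper gives no proof and defers to \cite{Xu2}): the identity $\mathbb{E}_{\underline{\boldsymbol{x}}_q}f=r\,\partial_r f$ on $\mathbb{R}_*^{p+q+1}$ gives (1), and $G_{\boldsymbol{x}}=|\underline{\boldsymbol{x}}_q|^2\bar{\vartheta}$ together with $|\underline{\boldsymbol{x}}_q|>0$ gives (2). Your curve argument via $t\mapsto\boldsymbol{x}_p+t\underline{\boldsymbol{x}}_q$ justifies $\mathbb{E}_{\underline{\boldsymbol{x}}_q}=r\partial_r$ for arbitrary $C^1$ functions, which is more careful than the coordinatewise rule $\partial_{x_{q_i}}=\frac{x_{q_i}}{|\underline{\boldsymbol{x}}_q|}\partial_r$ the paper uses later, since that rule holds only for functions that depend on $\underline{\boldsymbol{x}}_q$ through $r$ alone.
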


\section{Norm estimates of Teodorescu transform}
In \cite{Hu}, the Cauchy kernel definition for generalized partial-slice monogenic functions is presented as follows
\begin{align*}
	K_{\boldsymbol{y}}\left( \boldsymbol{x} \right) =\frac{\mathcal{E} _{\boldsymbol{y}}\left( \boldsymbol{x} \right)}{\sigma _{q-1}\left| \underline{\boldsymbol{y}}_q \right|^{q-1}}
\end{align*}
where $\sigma_{q-1}$ is the area of the $\left( q-1 \right) \text{-}$sphere $\mathbb{S}$.
\par
The Cauchy-Pompeiu formula mentioned earlier is only applicable to slice domains $\Omega_{\underline{\eta}}$. However, by employing the methods presented in \cite[Theorem 3.5, 3.6]{Di1}, we can readily derive the Cauchy-Pompeiu formula and the Cauchy integral formula on $\Omega_D$, as shown below.

\begin{theorem}[Cauchy-Pompeiu Formula]\label{CPFG}
	Let $\Omega _D\subset \mathbb{R} _{*}^{p+q+1}$ be a bounded domain as previously defined, and let $f \in \ker \bar{\vartheta}$ be a generalized partial-slice function. Suppose $U$ is a domain in $\mathbb{R}^{p+q+1}$ such that $U_D \subset \Omega_D$ is a bounded domain in $\mathbb{R}^{p+2}$ with a smooth boundary $\partial U_D \subset \Omega_D$. Then, for any $\boldsymbol{x} \in U$, the following holds
	\begin{align*}
		f(\boldsymbol{x}) = \int_{\partial U_D} K_{\boldsymbol{y}}(\boldsymbol{x}) \, n(\boldsymbol{y}) \, f(\boldsymbol{y}) \, dS(\boldsymbol{y}) - \int_{U_D} K_{\boldsymbol{y}}(\boldsymbol{x}) \, (\bar{\vartheta}f)(\boldsymbol{y}) \, d\sigma(\boldsymbol{y}),
	\end{align*}
	where $n(\boldsymbol{y})$ denotes the unit outward normal vector to $\partial U_D$ at $\boldsymbol{y}$, and $dS$ and $d\sigma$ represent the Lebesgue surface and volume element in $\mathbb{R}^{p+2}$, respectively.
\end{theorem}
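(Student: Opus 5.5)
We derive the statement from the slice-wise Cauchy--Pompeiu formula of Theorem \ref{CPF} by averaging over all slices, in the spirit of \cite[Theorem 3.5, 3.6]{Di1}.

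\textbf{Step 1: slice-wise formula.} Fix $\boldsymbol{x}\in U$ and $\underline{\eta}\in\mathbb{S}$. Theorem \ref{CPF} applied on the slice $U_{\underline{\eta}}$ gives
\begin{align*}
f(\boldsymbol{x})=\int_{\partial U_{\underline{\eta}}}\mathcal{E}_{\boldsymbol{y}}(\boldsymbol{x})\,n(\boldsymbol{y})\,f(\boldsymbol{y})\,dS_{\underline{\eta}}(\boldsymbol{y})-\int_{U_{\underline{\eta}}}\mathcal{E}_{\boldsymbol{y}}(\boldsymbol{x})\,(D_{\underline{\eta}}f)(\boldsymbol{y})\,d\sigma_{\underline{\eta}}(\boldsymbol{y}).
\end{align*}
Because $\Omega_D\subset\mathbb{R}^{p+q+1}_*$, every point $\boldsymbol{y}=\boldsymbol{y}_p+s\underline{\eta}$ of the slice has $s=|\underline{\boldsymbol{y}}_q|>0$. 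By the proposition above, $D_{\underline{\eta}}f=\bar{\vartheta}f$ at such points. So the volume term can be rewritten with $\bar{\vartheta}f$, and the identity holds for every $\underline{\eta}\in\mathbb{S}$.

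\textbf{Step 2: average over $\mathbb{S}$.} Integrate the identity in $\underline{\eta}$ against the normalized measure $dS(\underline{\eta})/\sigma_{q-1}$ on $\mathbb{S}$; the left side stays $f(\boldsymbol{x})$.

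\textbf{Step 3: change variables on the right.} Use polar coordinates in the last $q$ variables,
\begin{align*}
d\sigma(\boldsymbol{y})=s^{q-1}\,ds\,d\boldsymbol{y}_p\,dS(\underline{\eta}),
\end{align*}
to combine the $\underline{\eta}$-integral with the slice integrals into integrals over $U_D$ and $\partial U_D$. The density $s^{q-1}=|\underline{\boldsymbol{y}}_q|^{q-1}$ produced by this change is absorbed by the factor $1/(\sigma_{q-1}|\underline{\boldsymbol{y}}_q|^{q-1})$ in $K_{\boldsymbol{y}}(\boldsymbol{x})$, which is exactly why that normalization appears. The boundary integral is handled the same way. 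On the $p$-symmetric hypersurface $\partial U_D$, the surface element is $s^{q-1}\,dS_{\underline{\eta}}\,dS(\underline{\eta})$ and the outward normal at $\boldsymbol{y}_p+s\underline{\eta}$ is the slice normal $\sum_{i\le p} n_ie_i+n_{p+1}\underline{\eta}$. Once this is established, the two integrals are precisely those in the statement.

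\textbf{Main obstacle.} I expect the difficulty to be justifying Step 3 rigorously. One must check that $\partial U_D$ really fibres over $\mathbb{S}$ with slice normals. One must also check that the kernel is integrable near $\boldsymbol{y}=\boldsymbol{x}$ (and near its conjugate points $\boldsymbol{x}_p\pm r\underline{\eta}$), so that Fubini applies to the weakly singular volume integral. The plan is to control the singularity slice by slice: $\mathcal{E}_{\boldsymbol{y}}(\boldsymbol{x})$ is bounded by $|E|$ terms of order $|\cdot|^{-(p+1)}$ in $\mathbb{R}^{p+2}$, which are locally integrable on each slice uniformly in $\underline{\eta}$. The weight $|\underline{\boldsymbol{y}}_q|^{1-q}$ is bounded on the bounded set $U_D$ because $U_D$ stays away from $\mathbb{R}^{p+1}$. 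These two bounds justify the interchange.
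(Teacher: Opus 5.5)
Your strategy is the one the paper intends. The paper gives no argument beyond invoking the method of \cite{Di1}: apply Theorem \ref{CPF} slice by slice, then average over $\mathbb{S}$ using polar coordinates in $\underline{\boldsymbol{y}}_q$. The gap is in Step 3, exactly where you assert that the averaged integrals are ``precisely those in the statement''.

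The slice $U_{\underline{\eta}}=U\cap(\mathbb{R}^{p+1}\oplus\underline{\eta}\mathbb{R})$ contains both the part with $s>0$ and the part with $s<0$. You need both: the slice formula reproduces $f(\boldsymbol{x})=\alpha f(\boldsymbol{x}_{\underline{\eta}})+\beta f(\boldsymbol{x}_{-\underline{\eta}})$ only because $\boldsymbol{x}_p\pm r\underline{\eta}$ both lie in $U_{\underline{\eta}}$. Moreover $U_{\underline{\eta}}=U_{-\underline{\eta}}$, with the same kernel $\mathcal{E}_{\boldsymbol{y}}(\boldsymbol{x})$.

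The formula $d\sigma=s^{q-1}ds\,d\boldsymbol{y}_p\,dS(\underline{\eta})$ covers $U_D$ once only for $s>0$. Equivalently, one may take $s\in\mathbb{R}$ with $\underline{\eta}$ in a hemisphere $\mathbb{S}^+$, which is how the paper decomposes $d\sigma$ later. Full slices over the full sphere cover $U_D$ twice, so
\[
\frac{1}{\sigma_{q-1}}\int_{\mathbb{S}}\int_{U_{\underline{\eta}}}\mathcal{E}_{\boldsymbol{y}}(\boldsymbol{x})\,g(\boldsymbol{y})\,d\sigma_{\underline{\eta}}(\boldsymbol{y})\,dS(\underline{\eta})=2\int_{U_D}K_{\boldsymbol{y}}(\boldsymbol{x})\,g(\boldsymbol{y})\,d\sigma(\boldsymbol{y}),
\]
and the same holds for the boundary term. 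What your argument actually yields is
\[
f(\boldsymbol{x})=2\int_{\partial U_D}K_{\boldsymbol{y}}(\boldsymbol{x})\,n(\boldsymbol{y})\,f(\boldsymbol{y})\,dS(\boldsymbol{y})-2\int_{U_D}K_{\boldsymbol{y}}(\boldsymbol{x})\,(\bar{\vartheta}f)(\boldsymbol{y})\,d\sigma(\boldsymbol{y}).
\]

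This factor is not an artifact of your method. With the kernel $K_{\boldsymbol{y}}(\boldsymbol{x})=\mathcal{E}_{\boldsymbol{y}}(\boldsymbol{x})/(\sigma_{q-1}|\underline{\boldsymbol{y}}_q|^{q-1})$, the displayed identity itself is off by $2$. Take $f\equiv1$. Theorem \ref{CPF} gives $\int_{\partial U_{\underline{\eta}}}\mathcal{E}_{\boldsymbol{y}}(\boldsymbol{x})n(\boldsymbol{y})\,dS_{\underline{\eta}}(\boldsymbol{y})=1$ for every $\underline{\eta}$. Hence $\int_{\partial U_D}K_{\boldsymbol{y}}(\boldsymbol{x})n(\boldsymbol{y})\,dS(\boldsymbol{y})=\frac{1}{\sigma_{q-1}}\int_{\mathbb{S}^+}dS=\frac12$, not $1$. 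The paper's own step $\frac{1}{\sigma_{q-1}}\int_{\mathbb{S}^+}2f\,dS=f$, in the proof of $\bar{\vartheta}_{\boldsymbol{x}}T_{\Omega_D}f=f$, confirms $|\mathbb{S}^+|=\sigma_{q-1}/2$.

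So the identity holds as displayed only if $\sigma_{q-1}$ in $K$ is replaced by $\sigma_{q-1}/2$, the area of $\mathbb{S}^+$. You should track this constant explicitly rather than claim that the normalization in $K$ absorbs everything.

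Your remaining analytic points are fine: integrability of the kernel near the sphere $\boldsymbol{x}_p+r\mathbb{S}$, boundedness of $|\underline{\boldsymbol{y}}_q|^{1-q}$ on $\overline{U_D}\subset\mathbb{R}^{p+q+1}_*$, and slice normals on $\partial U_D$. You do also need $f$ to be $C^1$ up to $\partial U_D$ in order to invoke Theorem \ref{CPF}.
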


\begin{theorem}[Cauchy Integral Formula]
    Let $\Omega _D\subset \mathbb{R} _{*}^{p+q+1}$ be a bounded domain as previously defined, and let $f \in \ker \bar{\vartheta}$ be a slice function. Then, for any $\boldsymbol{x} \in \Omega_D$, the following identity holds
	\begin{align} \label{CIF}
		\int_{\partial \Omega_D} K_{\boldsymbol{y}}(\boldsymbol{x}) \, n(\boldsymbol{y}) \, f(\boldsymbol{y}) \, d\sigma(\boldsymbol{y}) = f(\boldsymbol{x}),
	\end{align}
	where $n(\boldsymbol{y})$ is the unit outward normal vector to the boundary $\partial \Omega_D$ at $\boldsymbol{y}$.
\end{theorem}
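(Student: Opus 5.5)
The plan is to obtain \eqref{CIF} as the special case of Theorem \ref{CPFG} in which the volume term vanishes. Since $f\in\ker\bar{\vartheta}$, the integrand $K_{\boldsymbol{y}}(\boldsymbol{x})(\bar{\vartheta}f)(\boldsymbol{y})$ is identically zero, so only the boundary integral is left. There is one catch: Theorem \ref{CPFG} asks for a subdomain $U$ whose trace $U_D$ has smooth boundary strictly inside $\Omega_D$. I therefore intend to apply it on an exhaustion of $\Omega_D$ and then pass to the limit.

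\textbf{Step 1 (exhaustion).} Fix $\boldsymbol{x}\in\Omega_D$. Choose $p$-symmetric subdomains $U^{(k)}=\Omega_{D_k}$ with $D_k\subset\subset D$ reflection-invariant, with smooth boundaries, with $\boldsymbol{x}\in U^{(k)}$, and with $\partial D_k\to\partial D$ in $C^1$. Because $\Omega_D\subset\mathbb{R}^{p+q+1}_*$, every $D_k$ stays away from $r=0$, so $\bar{\vartheta}$ and the kernel $K_{\boldsymbol{y}}$ (which contains the factor $|\underline{\boldsymbol{y}}_q|^{1-q}$) are smooth and bounded near the boundaries.

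\textbf{Step 2.} Apply Theorem \ref{CPFG} on each $U^{(k)}$. Using $\bar{\vartheta}f=0$, this gives
\[
f(\boldsymbol{x})=\int_{\partial U^{(k)}_D}K_{\boldsymbol{y}}(\boldsymbol{x})\,n(\boldsymbol{y})\,f(\boldsymbol{y})\,dS(\boldsymbol{y}).
\]

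\textbf{Step 3 (limit).} Let $k\to\infty$. Here I assume, as is implicit in the statement, that $f$ extends continuously to $\overline{\Omega_D}$, which is what makes the boundary integral in \eqref{CIF} meaningful. The point $\boldsymbol{x}$ stays at positive distance from every $\partial U^{(k)}_D$ and from $\partial\Omega_D$. Hence $\boldsymbol{y}\mapsto K_{\boldsymbol{y}}(\boldsymbol{x})$ is uniformly bounded and continuous near $\partial\Omega_D$, through the Cauchy kernels $E_{\boldsymbol{y}}(\boldsymbol{x}_p\pm r\underline{\eta})$. Dominated convergence, combined with the $C^1$ convergence of the boundaries and their normals, then yields \eqref{CIF}.

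\textbf{Main obstacle.} The delicate part is the limit in Step 3: one needs a smooth $p$-symmetric exhaustion and uniform control of $K$, including the weight $|\underline{\boldsymbol{y}}_q|^{1-q}$. This is harmless precisely because the domain avoids $\mathbb{R}^{p+1}$. If $\partial\Omega_D$ is already smooth and $f\in C^1(\overline{\Omega_D})$, the limit is unnecessary: one takes $U=\Omega_D$ directly in Theorem \ref{CPFG}, following the argument of \cite[Theorem 3.6]{Di1}.
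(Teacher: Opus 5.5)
Your reduction is essentially the paper's own route. The paper gives no separate proof: it presents the formula as an immediate companion of Theorem~\ref{CPFG} (already stated for $f\in\ker\bar{\vartheta}$, so its volume term is absent) and credits both to the slice-integration method of \cite{Di1}, and your exhaustion step, under the added hypotheses that $\partial\Omega_D$ is smooth and $f$ is continuous on $\overline{\Omega_D}$, only makes explicit the passage from $\partial U_D\subset\Omega_D$ to $\partial\Omega_D$ that the paper skips. Be aware, though, that any reduction to Theorem~\ref{CPFG} inherits its normalization of $K_{\boldsymbol{y}}$, and that normalization fails a direct check: slicing $\partial\Omega_D$ as in the proof of Theorem~\ref{Plemelj integral formula}, applying Theorem~\ref{CPF} on each full slice, and using $|\mathbb{S}^+|=\sigma_{q-1}/2$ as the paper does gives $\int_{\partial\Omega_D}K_{\boldsymbol{y}}(\boldsymbol{x})n(\boldsymbol{y})f(\boldsymbol{y})\,dS(\boldsymbol{y})=\frac{1}{\sigma_{q-1}}\int_{\mathbb{S}^+}f(\boldsymbol{x})\,dS_{\underline{\omega}}=\frac{1}{2}f(\boldsymbol{x})$ (visible already for $f\equiv 1$), so with the factor $1/\sigma_{q-1}$ in $K_{\boldsymbol{y}}$ the identity is off by a factor $2$ --- a defect of the paper's statement, not of your argument.
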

\par

In \cite{Hu}, we further define the following operators
\begin{align*}
	T_{\Omega_D}f(\boldsymbol{x}) &= -\int_{\Omega_D} K_{\boldsymbol{y}}(\boldsymbol{x}) f(\boldsymbol{y}) \, d\sigma(\boldsymbol{y}), \\
	F_{\partial \Omega_D}f(\boldsymbol{x}) &= \int_{\partial \Omega_D} K_{\boldsymbol{y}}(\boldsymbol{x}) n(\boldsymbol{y}) f(\boldsymbol{y}) \, dS(\boldsymbol{y}).
\end{align*}
 Using these definitions, the Cauchy-Pompeiu formula in Theorem \ref{CPFG} can be expressed as
\begin{align*}
	F_{\partial \Omega_D}f(\boldsymbol{x}) + T_{\Omega_D}(\bar{\vartheta}f)(\boldsymbol{x}) = f(\boldsymbol{x}), \quad \boldsymbol{x} \in \Omega_D.
\end{align*}
Here, $T_{\Omega_D}$ is referred to as the \textit{Teodorescu transform}. For functions with compact support in $\Omega_D$, we have $F_{\partial \Omega_D}f(\boldsymbol{x}) = 0$, which simplifies the above equation to
\begin{align*}
	T_{\Omega_D}(\bar{\vartheta}f)(\boldsymbol{x}) = f(\boldsymbol{x}).
\end{align*}
This indicates that $T_{\Omega_D}$ acts as a left inverse of $\bar{\vartheta}$ for compactly supported functions.

Next, we discuss the existence of $T_{\Omega_D}f$ and provide a norm estimate for $T_{\Omega_D}$. To facilitate our analysis, we introduce the slice Teodorescu transform
\begin{align*}
	T_{\Omega _{\underline{\omega }}}f(\boldsymbol{x})=-\int_{\Omega _{\underline{\omega }}}{\mathcal{E} _{\boldsymbol{y}}(\boldsymbol{x})f(\boldsymbol{y})\,d\sigma _{\underline{\omega }}(\boldsymbol{y})}.
\end{align*}
 Additionally, we utilize a fundamental theorem from measure theory, which ensures the interchangeability of differentiation and integration, as a key tool in our arguments.

\begin{theorem}\label{interchange}
	\cite{Fo} Suppose that $f:X\times \left[ a,b \right] \longrightarrow \mathbb{C} ,\left( -\infty <a<b<+\infty \right) $ and $f\left( \cdot ,t \right) :X\longrightarrow \mathbb{C} $ is integrable for each $t\in \left[ a,b \right] $. Let 
	\begin{align*}
		F\left( t \right) =\int_X{f\left( x,t \right) d\mu \left( x \right)},
	\end{align*} and 
	\begin{enumerate}
		\item Suppose that $\frac{\partial f}{\partial t}$ exists;
		\item $\exists g\in L^1\left( \mu \right) $ such that $\left| \frac{\partial f}{\partial t}\left( x,t \right) \right|\leqslant g\left( x \right) $ for all $x$ and $t$.
	\end{enumerate}
	Then $F$ is differentiable and 
	\begin{align*}
		F^{\prime}\left( t \right) =\int_X{\frac{\partial f}{\partial t}\left( x,t \right) d\mu \left( x \right)}.
	\end{align*}
\end{theorem}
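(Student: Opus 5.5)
This is the classical result on differentiating under the integral sign. I would derive it from the dominated convergence theorem, using a mean value estimate to produce the dominating function. Throughout, I read hypothesis (1) as saying that $\frac{\partial f}{\partial t}(x,t)$ exists for every $x\in X$ and every $t\in[a,b]$, with one-sided derivatives at the endpoints $a$ and $b$.

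\textbf{Step 1: reduce to sequences.} Fix $t_0\in[a,b]$. By the sequential characterization of limits, it suffices to show the following. For every sequence $h_n\to 0$ with $h_n\neq 0$ and $t_0+h_n\in[a,b]$,
\begin{align*}
\frac{F(t_0+h_n)-F(t_0)}{h_n}\longrightarrow \int_X \frac{\partial f}{\partial t}(x,t_0)\,d\mu(x).
\end{align*}
Define the difference quotients
\begin{align*}
g_n(x)=\frac{f(x,t_0+h_n)-f(x,t_0)}{h_n}.
\end{align*}
Each $g_n$ is integrable, since $f(\cdot,t)$ is integrable for every $t$. By linearity of the integral, $\frac{F(t_0+h_n)-F(t_0)}{h_n}=\int_X g_n\,d\mu$. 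Moreover, by hypothesis (1), $g_n(x)\to\frac{\partial f}{\partial t}(x,t_0)$ for every $x$. In particular $\frac{\partial f}{\partial t}(\cdot,t_0)$ is measurable, being a pointwise limit of measurable functions.

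\textbf{Step 2: domination.} For fixed $x$, the map $s\mapsto f(x,s)$ is differentiable on $[a,b]$. I would apply the real mean value theorem separately to $\operatorname{Re}f(x,\cdot)$ and $\operatorname{Im}f(x,\cdot)$ on the interval between $t_0$ and $t_0+h_n$. Together with hypothesis (2), this gives
\begin{align*}
|g_n(x)|\le \Big|\operatorname{Re}\tfrac{\partial f}{\partial t}(x,\xi_1)\Big|+\Big|\operatorname{Im}\tfrac{\partial f}{\partial t}(x,\xi_2)\Big|\le 2g(x)
\end{align*}
for some intermediate points $\xi_1,\xi_2$. Alternatively, the mean value inequality $|\varphi(u)-\varphi(v)|\le \sup|\varphi'|\,|u-v|$ for vector-valued differentiable $\varphi$ gives the sharper bound $|g_n|\le g$. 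Either bound is integrable.

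\textbf{Step 3: conclude.} Since $2g\in L^1(\mu)$ and $g_n\to\frac{\partial f}{\partial t}(\cdot,t_0)$ pointwise, the dominated convergence theorem yields $\int_X g_n\,d\mu\to\int_X\frac{\partial f}{\partial t}(x,t_0)\,d\mu(x)$. The limiting integral is finite because $\big|\frac{\partial f}{\partial t}(\cdot,t_0)\big|\le g$. Since the sequence $h_n$ was arbitrary, $F$ is differentiable at $t_0$, and $F'(t_0)$ equals the stated integral.

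The only delicate point is Step 2. The mean value theorem fails as an equality for complex-valued functions, so I would either split into real and imaginary parts, accepting the harmless factor $2$, or invoke the mean value inequality. At the endpoints of $[a,b]$ the same argument applies with one-sided sequences $h_n$.
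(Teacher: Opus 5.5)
Your argument is correct and is essentially the standard proof the paper relies on: the paper gives no proof of its own and simply cites Folland (Theorem 2.27), whose proof uses the same three ingredients. Those are difference quotients along a sequence $h_n\to 0$, domination of those quotients via the mean value theorem, and the dominated convergence theorem. Your explicit handling of the complex-valued mean value step is a careful refinement of a point Folland passes over quickly; you either split into real and imaginary parts at the cost of a factor $2$, or invoke the mean value inequality.
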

We also need the following well-known Stokes' Theorem for the conjugate Cauchy-Riemann operator $\partial _{\boldsymbol{x}_I}$ as follows.
\begin{theorem}
	\cite{Gu} Let $\Omega _I\subset \mathbb{C} _I$ be a domain with sufficiently smooth boundary $\partial \Omega $, and $f\left( \boldsymbol{x} \right) $, $g\left( \boldsymbol{x} \right) \in C^1\left( \overline{\Omega _I} \right) $. Then, we have 
	\begin{align*}
		\int_{\Omega _I}{\left( f\left( \boldsymbol{x} \right) \partial _{\boldsymbol{x}_I} \right) g\left( \boldsymbol{x} \right) +f\left( \boldsymbol{x} \right) \left( \partial _{\boldsymbol{x}_I}g\left( \boldsymbol{x} \right) \right) d\sigma _I\left( \boldsymbol{x} \right)}=\int_{\partial \Omega _I}{f\left( \boldsymbol{x} \right) \overline{d\boldsymbol{x}^*}g\left( \boldsymbol{x} \right)},
	\end{align*}
	where $\overline{d\boldsymbol{x}^*}=Id\boldsymbol{x}$ and $d\boldsymbol{x}$ is the line element on $\partial \Omega _I$.
\end{theorem}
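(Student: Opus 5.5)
The plan is to reduce the statement to the classical Gauss divergence theorem in the real plane underlying $\mathbb{C}_I$, applied componentwise to Clifford-valued functions. Write $\boldsymbol{x}=x_0+Ix_1\in\mathbb{C}_I$ with $I^2=-1$, and set $e_0=1$, $e_1=I$. Then $\partial_{\boldsymbol{x}_I}=\partial_{x_0}+I\partial_{x_1}=\sum_{i=0}^{1}e_i\partial_{x_i}$. The right action is $f\partial_{\boldsymbol{x}_I}=\sum_{i}(\partial_{x_i}f)e_i$, and the left action is $\partial_{\boldsymbol{x}_I}g=\sum_i e_i\partial_{x_i}g$. Since $f,g$ take values in the (noncommutative) algebra, I will always keep the factor $e_i$ sandwiched between $f$ and $g$ and never move it past either function.

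The first step is the Leibniz rule. For each $i$ the product rule gives
\begin{align*}
\partial_{x_i}\bigl(f\,e_i\,g\bigr)=(\partial_{x_i}f)\,e_i\,g+f\,e_i\,(\partial_{x_i}g),
\end{align*}
which is valid because $e_i$ is constant and the multiplication is bilinear. Summing over $i=0,1$ shows that the integrand on the left-hand side equals $\sum_{i=0}^{1}\partial_{x_i}h_i$ with $h_i:=f e_i g\in C^1(\overline{\Omega_I})$.

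The second step applies the divergence theorem to each real component of the vector field $(h_0,h_1)$; this is legitimate since $f,g\in C^1(\overline{\Omega_I})$ and $\partial\Omega_I$ is sufficiently smooth. It yields
\begin{align*}
\int_{\Omega_I}\sum_{i=0}^{1}\partial_{x_i}h_i\,d\sigma_I=\int_{\partial\Omega_I}\sum_{i=0}^{1}n_i\,h_i\,dS=\int_{\partial\Omega_I}f\,n(\boldsymbol{x})\,g\,dS,
\end{align*}
where $n=n_0+In_1$ is the outward unit normal and $dS$ is arc length. Again, the scalar $n_i$ may be moved freely, but $n$ itself must stay between $f$ and $g$.

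The last step, which I expect to be the only delicate point, identifies $n\,dS$ with the oriented form $\overline{d\boldsymbol{x}^*}=I\,d\boldsymbol{x}$. Parametrize $\partial\Omega_I$ by arc length with the orientation fixed by the convention in \cite{Gu}. The tangent $d\boldsymbol{x}$ is then the unit tangent times $ds$, and multiplication by $I$ rotates it by a right angle in $\mathbb{C}_I$, producing $\pm n\,ds$. I will check the sign carefully against the chosen boundary orientation (the one making $I\,d\boldsymbol{x}$ outward), so that $I\,d\boldsymbol{x}=n\,dS$. Substituting this into the boundary integral gives the claimed formula $\int_{\partial\Omega_I}f\,\overline{d\boldsymbol{x}^*}\,g$.
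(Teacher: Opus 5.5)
Your argument is correct and is the standard one. The paper gives no proof of this statement and only cites \cite{Gu}, where it is obtained exactly as you do: the Leibniz rule $\partial_{x_i}(f e_i g)=(\partial_{x_i}f)e_i g+f e_i(\partial_{x_i}g)$, followed by the componentwise divergence theorem, yields $\int_{\partial\Omega_I} f\,n\,g\,dS$ with $n$ kept between $f$ and $g$.

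Do not leave the last step as ``I will check'', because the check comes out against the usual orientation convention. For the unit disc traversed counterclockwise, $\boldsymbol{x}(s)=\cos s+I\sin s$ gives $d\boldsymbol{x}=I\boldsymbol{x}\,ds$, hence $I\,d\boldsymbol{x}=-\boldsymbol{x}\,ds=-n\,dS$. As a check, take $f=1$ and $g=\overline{\boldsymbol{x}}$. The left-hand side is $2\pi$, while $\int_{\partial\Omega_I} I\,d\boldsymbol{x}\,\overline{\boldsymbol{x}}$ equals $-2\pi$ counterclockwise and $2\pi$ clockwise.

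So the printed identity $\overline{d\boldsymbol{x}^*}=I\,d\boldsymbol{x}$ holds precisely when $\partial\Omega_I$ is oriented with $\Omega_I$ on its right. Equivalently, with the positive orientation you must use $-I\,d\boldsymbol{x}=n\,dS$. State this orientation explicitly rather than attributing it to \cite{Gu}.

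Your reading $\partial_{\boldsymbol{x}_I}=\partial_{x_0}+I\partial_{x_1}$ is also essential, despite the paper calling it the ``conjugate'' Cauchy--Riemann operator. For $\partial_{x_0}-I\partial_{x_1}$ the divergence theorem produces $f\,\overline{n}\,g\,dS$, which is not $\pm f\,I\,d\boldsymbol{x}\,g$. For instance, with $f=1$, $g=\boldsymbol{x}$ on the unit disc, the two sides are $2\pi$ and $0$ for either orientation. The form $\int f\,n\,g\,dS$ that you actually prove is the one the paper uses later with $D_{\boldsymbol{y}_{\underline{\omega}}}$.
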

Now, we introduce the derivatives of the slice Teodorescu transform as follows. This is important to obtain the result that $\bar{\vartheta}$ is the left inverse of the Teodorescu transform $T_{\Omega_D}$.
\begin{theorem}\label{Theta T=2f}
	Let $\Omega _D\subset \mathbb{R} _{*}^{p+q+1}$ be a bounded p-symmetric domain, $f\in C^1\left( \overline{\Omega _D} \right) $, then, for $\boldsymbol{x}\in \Omega _D$, we have 
	\begin{align*}
		\partial _{x_{p_i}}T_{\Omega _{\underline{\omega }}}f\left( \boldsymbol{x} \right) =\int_{\Omega _{\underline{\omega }}}{\left( \partial _{x_{p_i}}\mathcal{E} _{\boldsymbol{y}}\left( \boldsymbol{x} \right) \right) f\left( \boldsymbol{y} \right) d\sigma _{\underline{\omega }}\left( \boldsymbol{y} \right)}-e_i\left[ \alpha f\left( \boldsymbol{x}_{\underline{\omega }} \right) +\beta f\left( \boldsymbol{x}_{-\underline{\omega }} \right) \right],
	\end{align*}
	\begin{align*}
		&\partial _{x_{q_i}}T_{\Omega _{\underline{\omega }}}f\left( \boldsymbol{x} \right)\\
		=&\frac{x_{q_i}}{\left| \underline{\boldsymbol{x}}_q \right|}\left[ \int_{\Omega _{\underline{\omega }}}{\left( \frac{\partial}{\partial r}\mathcal{E} _{\boldsymbol{y}}\left( \boldsymbol{x} \right) \right) f\left( \boldsymbol{y} \right) d\sigma _{\underline{\omega }}\left( \boldsymbol{y} \right)}-\left( \alpha \underline{\omega }f\left( \boldsymbol{x}_{\underline{\omega }} \right) -\beta \underline{\omega }f\left( \boldsymbol{x}_{-\underline{\omega }} \right) \right) \right],
	\end{align*}
	where 
	\begin{align*}
		\alpha =\frac{1-\underline{\omega}\underline{\eta}}{2}, \beta =\frac{1+\underline{\omega}\underline{\eta}}{2}.
	\end{align*}
	In particular, if $f\in\mathcal{GS}(\Omega_D)$, we have 
	\begin{align*}
		\bar{\vartheta}T_{\Omega _{\underline{\omega}}}f\left( \boldsymbol{x} \right) =2f\left( \boldsymbol{x} \right).
	\end{align*}
\end{theorem}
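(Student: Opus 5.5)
The plan follows the classical argument for the complex Teodorescu transform in \cite{Gu}, applied to each of the two Cauchy kernels hidden in $\mathcal{E}_{\boldsymbol{y}}$. By definition,
\begin{align*}
T_{\Omega_{\underline{\omega}}}f(\boldsymbol{x}) = -\alpha\int_{\Omega_{\underline{\omega}}} E_{\boldsymbol{y}}(\boldsymbol{x}_p+r\underline{\eta})f(\boldsymbol{y})\,d\sigma_{\underline{\omega}}(\boldsymbol{y}) - \beta\int_{\Omega_{\underline{\omega}}} E_{\boldsymbol{y}}(\boldsymbol{x}_p-r\underline{\eta})f(\boldsymbol{y})\,d\sigma_{\underline{\omega}}(\boldsymbol{y}).
\end{align*}
Here $\alpha,\beta$ depend on $\boldsymbol{x}$ only through $\underline{\omega}$. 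The coordinates $x_{p_i}$ do not affect $\underline{\omega}$ at all. The coordinates $x_{q_i}$ enter only through $r=|\underline{\boldsymbol{x}}_q|$ (with $\partial r/\partial x_{q_i}=x_{q_i}/|\underline{\boldsymbol{x}}_q|$) and through $\underline{\omega}$. I will read $\partial_{x_{q_i}}$ as acting along the slice, that is, with $\underline{\omega}$ held fixed and by the chain rule through $r$. This is exactly the structure of the stated formula, since it carries the common prefactor $x_{q_i}/|\underline{\boldsymbol{x}}_q|$.

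Each of the two integrals is a classical Teodorescu transform in the $(p+2)$-dimensional slice $\Omega_{\underline{\omega}}\cong\Omega_{\underline{\eta}}$, evaluated at the points $\boldsymbol{x}_{\pm\underline{\omega}}$ (the slice images $\boldsymbol{x}_p\pm r\underline{\eta}$). Its kernel has a weak singularity $|\boldsymbol{x}-\boldsymbol{y}|^{-(p+1)}$. The derivative of the kernel is strongly singular, so Theorem \ref{interchange} cannot be applied directly.

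\textbf{Main step.} I will excise a small ball $B_\varepsilon$ around the singular point $\boldsymbol{x}_{\pm\underline{\omega}}$ in the slice. On $\Omega_{\underline{\omega}}\setminus B_\varepsilon$ the kernel is smooth in $\boldsymbol{x}$ with an integrable bound, so Theorem \ref{interchange} permits differentiation under the integral sign. Equivalently, I will write $f(\boldsymbol{y})=f(\boldsymbol{x}_{\pm\underline{\omega}})+(f(\boldsymbol{y})-f(\boldsymbol{x}_{\pm\underline{\omega}}))$. The second piece is $O(|\boldsymbol{y}-\boldsymbol{x}|)$ because $f\in C^1$, so its differentiated kernel is integrable and the interchange is legitimate for it.

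For the constant piece I will use $\partial_{y_i}E(\boldsymbol{x}-\boldsymbol{y})=-\partial_{x_i}E(\boldsymbol{x}-\boldsymbol{y})$ and the Stokes theorem quoted above, applied on the slice, to convert $\int\partial_{x_i}E$ into boundary integrals over $\partial\Omega_{\underline{\omega}}$ and $\partial B_\varepsilon$. As $\varepsilon\to0$, the $\partial B_\varepsilon$ term gives the usual jump $-e_i f(\boldsymbol{x}_{\pm\underline{\omega}})$ for the $i$-th coordinate, with $e_i$ replaced by $\underline{\omega}$ for the $r$-direction. This uses $\int_{\partial B_\varepsilon}E(\boldsymbol{x}-\boldsymbol{y})\,n_i\,dS\to$ the fraction of the sphere weighted by $\overline{n}n_i$, giving $e_i$. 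The remaining boundary term recombines into the principal-value integral written as $\int(\partial\mathcal{E})f$.

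There is a sign flip for the $-r\underline{\eta}$ copy: $\partial_r$ of $\boldsymbol{x}_p-r\underline{\eta}$ produces $-\underline{\omega}$ after transport back to the slice. This is the source of $\alpha\underline{\omega}f(\boldsymbol{x}_{\underline{\omega}})-\beta\underline{\omega}f(\boldsymbol{x}_{-\underline{\omega}})$. The main obstacle is this jump computation, in particular keeping the non-commuting factors $\alpha,\beta$ on the left and $\underline{\omega}$ placed correctly.

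\textbf{Final assertion.} I will assemble
\begin{align*}
\bar{\vartheta}=\sum_i e_i\partial_{x_{p_i}}+\frac{\underline{\boldsymbol{x}}_q}{|\underline{\boldsymbol{x}}_q|^2}\sum_i x_{q_i}\partial_{x_{q_i}} = D_{\boldsymbol{x}_p}+\underline{\omega}\,\partial_r
\end{align*}
from the two formulas. The integral parts combine to $\int D_{\underline{\omega}}\mathcal{E}_{\boldsymbol{y}}(\boldsymbol{x})f\,d\sigma$, which vanishes because $\mathcal{E}_{\boldsymbol{y}}$ is generalized partial-slice monogenic in $\boldsymbol{x}$ away from the singularity (here $\bar{\vartheta}=D_{\underline{\omega}}$ by the Proposition), in the principal-value sense.

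For the jump terms, $-\sum_i e_ie_i=p+1$ in the $\boldsymbol{x}_p$-part and $-\underline{\omega}^2=1$ in the $r$-part. Together they give the full-dimensional constant, which is absorbed by the normalization of $E$, so that each point $\boldsymbol{x}_{\pm\underline{\omega}}$ contributes with weight $1$. Hence
\begin{align*}
\bar{\vartheta}T_{\Omega_{\underline{\omega}}}f(\boldsymbol{x}) = (\alpha+\beta)\,f(\boldsymbol{x}_{\underline{\omega}}) + \text{the }\beta\text{-term at }\boldsymbol{x}_{-\underline{\omega}}.
\end{align*}
When $f\in\mathcal{GS}(\Omega_D)$, Theorem \ref{*Representation Formula*} applied with $\underline{\eta}$ collapses these to $f(\boldsymbol{x})$ counted twice, once from each of the two kernels $E(\boldsymbol{x}_p\pm r\underline{\eta})$. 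This yields $2f(\boldsymbol{x})$. I expect the bookkeeping of this factor $2$ to be delicate.
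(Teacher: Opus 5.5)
The argument breaks at the constants, and the break cannot be repaired. The $\partial B_\varepsilon$-limit you describe (``the fraction of the sphere weighted by $\bar n n_i$'') equals $\frac{1}{\sigma_{p+1}}\int_{|\boldsymbol{\xi}|=1}\overline{\boldsymbol{\xi}}\,\xi_i\,dS(\boldsymbol{\xi})=\frac{\overline{e_i}}{p+2}$. In the $r$-direction it equals $\frac{\overline{\underline{\omega}}}{p+2}$. It is not $e_i$, resp.\ $\underline{\omega}$. Since $(\overline{e_0},\dots,\overline{e_p})$ is not proportional to $(e_0,\dots,e_p)$, no orientation convention turns this into the unit jumps $-e_i$ of the statement. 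So your computation does not produce the two displayed derivative formulas. Your assembly is also inconsistent with unit jumps: $-\sum_{i=0}^{p}e_ie_i=p-1$, not $p+1$, because $e_0^2=1$. Moreover, the normalization of $E$ is already fixed by $\sigma_{p+1}$, so nothing is left to be ``absorbed''. What does hold is $\sum_{i=0}^{p}e_i\frac{\overline{e_i}}{p+2}+\underline{\omega}\,\frac{\overline{\underline{\omega}}}{p+2}=1$, i.e.\ total weight one at each of $\boldsymbol{x}_{\pm\underline{\omega}}$. Your splitting $f(\boldsymbol{y})=f(\boldsymbol{x}_{\pm\underline{\omega}})+\bigl(f(\boldsymbol{y})-f(\boldsymbol{x}_{\pm\underline{\omega}})\bigr)$ is a legitimate alternative to the paper's device, which writes $\sigma_{p+1}E_{\boldsymbol{y}}(\boldsymbol{x}_{\underline{\omega}})=-\frac{1}{p}D_{\boldsymbol{y}_{\underline{\omega}}}|\boldsymbol{y}-\boldsymbol{x}_{\underline{\omega}}|^{-p}$ and integrates by parts before differentiating. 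The trouble is what the boundary term yields, not the method.

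With weight one, the factor $2$ has no source. Fix the roles, which your text blurs: let $\underline{\nu}$ be the direction of $\boldsymbol{x}$ and $\Omega_{\underline{\omega}}$ the slice of integration, so that $\alpha=\frac12(1-\underline{\nu}\,\underline{\omega})$ and $\beta=\frac12(1+\underline{\nu}\,\underline{\omega})$ are the representation-formula coefficients. Then $\bar{\vartheta}=D_{\boldsymbol{x}_p}+\underline{\nu}\,\partial_r$, and each $e_i$ with $0\le i\le p$ commutes with $\alpha$ and $\beta$. Also $\underline{\nu}\alpha=\alpha\underline{\omega}$ and $\underline{\nu}\beta=-\beta\underline{\omega}$; this identity is where the sign flip you noticed for the second kernel enters. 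Hence the jumps add up to $\alpha f(\boldsymbol{x}_{\underline{\omega}})+\beta f(\boldsymbol{x}_{-\underline{\omega}})$, which the representation formula turns into $f(\boldsymbol{x})$ \emph{once}. The two kernels supply the complementary shares $\alpha f(\boldsymbol{x}_{\underline{\omega}})$ and $\beta f(\boldsymbol{x}_{-\underline{\omega}})$, not a full $f(\boldsymbol{x})$ each. Carried out correctly, your argument gives $\bar{\vartheta}T_{\Omega_{\underline{\omega}}}f=f$, and this value is forced. Indeed, $T_{\Omega_{\underline{\omega}}}f(\boldsymbol{x})=\alpha G(\boldsymbol{x}_{\underline{\omega}})+\beta G(\boldsymbol{x}_{-\underline{\omega}})$ with $G$ the classical Teodorescu transform of $f$ on the slice. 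The same identities give $\bar{\vartheta}T_{\Omega_{\underline{\omega}}}f(\boldsymbol{x})=\alpha(D_{\underline{\omega}}G)(\boldsymbol{x}_{\underline{\omega}})+\beta(D_{\underline{\omega}}G)(\boldsymbol{x}_{-\underline{\omega}})$, and $D_{\underline{\omega}}G=f$ on the slice. Equivalently, Theorem \ref{CPF} gives $T_{\Omega_{\underline{\omega}}}(\bar{\vartheta}g)=g$ for every $g\in C^2_c(\Omega_D)\cap\mathcal{GS}(\Omega_D)$. Applying the claimed identity to $f=\bar{\vartheta}g\in C^1(\overline{\Omega_D})\cap\mathcal{GS}(\Omega_D)$ would then force $\bar{\vartheta}g=0$ for all such $g$. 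The paper reaches $2f$ by a count different from yours. It takes unit jumps $-e_i$ and $-\underline{\omega}$ and lets the $D_{\boldsymbol{x}_p}$-block produce one copy of $\alpha f(\boldsymbol{x}_{\underline{\omega}})+\beta f(\boldsymbol{x}_{-\underline{\omega}})$, which would require $-\sum_{i=0}^{p}e_i^2=1$. It then lets the Euler term produce a second copy. With correctly normalized jumps these two contributions are $\frac{p+1}{p+2}$ and $\frac{1}{p+2}$ of a single copy. So do not try to reproduce the $2$ or the unnormalized jump terms. If a factor $2$ is needed for the $\mathbb{S}^+$-average in the following theorem, it has to be built into the normalization of $K_{\boldsymbol{y}}$.
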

The expression $T_{\Omega_{\underline{\omega}}}f$ represents a singular integral only when $\boldsymbol{x}$ lies within the domain $\Omega_{\underline{\omega}}$. Our method relies on the fact that $\boldsymbol{x}$ can be rewritten in terms of $\boldsymbol{x}_{\underline{\omega}}$ and $\boldsymbol{x}_{-\underline{\omega}}$ by the representation formula. This transformation gives rise to two singular integral operators, which should be considered as Cauchy's principal values.
\begin{proof}
	Firstly, we denote $\boldsymbol{x}=\boldsymbol{x}_p+\underline{\eta}r$, where $\underline{\eta}=\frac{\underline{\boldsymbol{x}}_q}{\left| \underline{\boldsymbol{x}}_q \right|}$ and $r=\left| \underline{\boldsymbol{x}}_q \right|$. Similarly, we denote $\boldsymbol{y}=\boldsymbol{y}_p+\underline{\omega }\tilde{r}$, where $\underline{\omega }=\frac{\underline{\boldsymbol{y}}_q}{\left| \underline{\boldsymbol{y}}_q \right|}$ and $\tilde{r}=\left| \underline{\boldsymbol{y}}_q \right|$. Let $\boldsymbol{x}_{\underline{\omega}}=\boldsymbol{x}_p+\underline{\omega}r$, and $D_{\boldsymbol{x}_{\underline{\omega }}}=D_{\boldsymbol{x}_p}+\underline{\omega }\partial _r=\sum_{i=0}^p{e_i\partial _{x_i}}+\underline{\omega }\partial _r$. We know that 
	\begin{align*}
		\mathcal{E} _{\boldsymbol{y}}\left( \boldsymbol{x} \right) &=\frac{1}{2}\left( 1-\underline{\omega}\underline{\eta} \right) E_{\boldsymbol{y}}\left( \boldsymbol{x}_{\underline{w}} \right) +\frac{1}{2}\left( 1+\underline{\omega}\underline{\eta} \right) E_{\boldsymbol{y}}\left( \boldsymbol{x}_{-\underline{w}} \right)\\
		&=\alpha E_{\boldsymbol{y}}\left( \boldsymbol{x}_{\underline{w}} \right) +\beta E_{\boldsymbol{y}}\left( \boldsymbol{x}_{-\underline{w}} \right) ,
	\end{align*}
	and 
	\begin{align*}
		\sigma _{p+1}E_{\boldsymbol{y}}\left( \boldsymbol{x}_{\underline{w}} \right) =\frac{\overline{\boldsymbol{y}-\boldsymbol{x}_{\underline{w}}}}{\left| \boldsymbol{y}-\boldsymbol{x}_{\underline{w}} \right|^{p+2}}=\frac{1}{p}D_{\boldsymbol{x}_{\underline{w}}}\frac{1}{\left| \boldsymbol{y}-\boldsymbol{x}_{\underline{w}} \right|^p}=-\frac{1}{p}D_{\boldsymbol{y}_{\underline{w}}}\frac{1}{\left| \boldsymbol{y}-\boldsymbol{x}_{\underline{w}} \right|^p}.
	\end{align*}
	Since $T_{\Omega _{\underline{\omega}}}f$ is a singular integral, which only makes sense as a Cauchy principal value, let $B_{\epsilon}=B\left( \boldsymbol{x}_{\underline{\omega}},\epsilon \right) \cup B\left( \boldsymbol{x}_{-\underline{\omega}},\epsilon \right) \subset \Omega _{\underline{\omega}}$ for a sufficiently small $\epsilon >0$. Then, we have 
	\begin{align*}
		&-\sigma _{p+1}T_{\Omega _{\underline{\omega}}}f\left( \boldsymbol{x} \right)  =\sigma _{p+1}\int_{\Omega _{\underline{\omega }}}{\mathcal{E} _{\boldsymbol{y}}\left( \boldsymbol{x} \right) f\left( \boldsymbol{y} \right) d\sigma _{\underline{\omega }}\left( \boldsymbol{y} \right)}\\
		=&\sigma _{p+1}\lim_{\epsilon \rightarrow 0} \int_{\Omega _{\underline{\omega}}\backslash B_{\epsilon}}{\mathcal{E} _{\boldsymbol{y}}\left( \boldsymbol{x} \right) f\left( \boldsymbol{y} \right) d\sigma _{\underline{\omega}}\left( \boldsymbol{y} \right)}\\
		=&\sigma _{p+1}\lim_{\epsilon \rightarrow 0} \int_{\Omega _{\underline{\omega}}\backslash B_{\epsilon}}{\left( \alpha E_{\boldsymbol{y}}\left( \boldsymbol{x}_{\underline{w}} \right) +\beta E_{\boldsymbol{y}}\left( \boldsymbol{x}_{-\underline{w}} \right) \right) f\left( \boldsymbol{y} \right) d\sigma _{\underline{\omega}}\left( \boldsymbol{y} \right)}\\
		=&-\frac{1}{p}\lim_{\epsilon \rightarrow 0} \int_{\Omega _{\underline{\omega }}\backslash B_{\epsilon}}{\left[ \left( \alpha \frac{1}{\left| \boldsymbol{y}-\boldsymbol{x}_{\underline{w}} \right|^p}+\beta \frac{1}{\left| \boldsymbol{y}-\boldsymbol{x}_{-\underline{w}} \right|^p} \right) D_{\boldsymbol{y}_{\underline{\omega }}} \right] f\left( \boldsymbol{y} \right) d\sigma _{\underline{\omega }}\left( \boldsymbol{y} \right)}\\
		=&-\frac{1}{p}\lim_{\epsilon \rightarrow 0} \left[ -\int_{\Omega _{\underline{\omega }}\backslash B_{\epsilon}}{\left( \alpha \frac{1}{\left| \boldsymbol{y}-\boldsymbol{x}_{\underline{w}} \right|^p}+\beta \frac{1}{\left| \boldsymbol{y}-\boldsymbol{x}_{-\underline{w}} \right|^p} \right) \left( D_{\boldsymbol{y}_{\underline{\omega}}}f\left( \boldsymbol{y} \right) \right) d\sigma _{\underline{\omega}}\left( \boldsymbol{y} \right)} \right] \\
		& -\frac{1}{p}\lim_{\epsilon \rightarrow 0} \left[ \left( \int_{\partial \Omega _{\underline{\omega }}}{-\int_{\partial B_{\epsilon}}} \right) \left( \alpha \frac{1}{\left| \boldsymbol{y}-\boldsymbol{x}_{\underline{w}} \right|^p}+\beta \frac{1}{\left| \boldsymbol{y}-\boldsymbol{x}_{-\underline{w}} \right|^p} \right) \overline{d\boldsymbol{y}^*}f\left( \boldsymbol{y} \right) \right] 
		\\
		=&-\frac{1}{p}\lim_{\epsilon \rightarrow 0} \left[ -\int_{\Omega _{\underline{\omega }}}{\left( \alpha \frac{1}{\left| \boldsymbol{y}-\boldsymbol{x}_{\underline{w}} \right|^p}+\beta \frac{1}{\left| \boldsymbol{y}-\boldsymbol{x}_{-\underline{w}} \right|^p} \right) \left( D_{\boldsymbol{y}_{\underline{\omega}}}f\left( \boldsymbol{y} \right) \right) d\sigma _{\underline{\omega}}\left( \boldsymbol{y} \right)} \right] \\
		& -\frac{1}{p}\int_{\partial \Omega _{\underline{\omega }}}{\left( \alpha \frac{1}{\left| \boldsymbol{y}-\boldsymbol{x}_{\underline{w}} \right|^p}+\beta \frac{1}{\left| \boldsymbol{y}-\boldsymbol{x}_{-\underline{w}} \right|^p} \right)}\overline{d\boldsymbol{y}^*}f\left( \boldsymbol{y} \right) .
	\end{align*}
	By Theorem $\ref{interchange}$, differentiation and integration can be interchanged. Indeed, since $f\in C^1\left( \overline{\Omega _D} \right) $, which implies that $f$ is bounded. Further, the homogeneity of
	\begin{align*}
		\partial _{x_{p_i}}\left[ \alpha \frac{1}{\left| \boldsymbol{y}-\boldsymbol{x}_{\underline{\omega }} \right|^p}+\beta \frac{1}{\left| \boldsymbol{y}-\boldsymbol{x}_{-\underline{\omega }} \right|^p} \right] =-p\left[ \alpha \frac{x_{p_i}-y_{p_i}}{\left| \boldsymbol{y}-\boldsymbol{x}_{\underline{\omega }} \right|^{p+2}}+\beta \frac{x_{p_i}-y_{p_i}}{\left| \boldsymbol{y}-\boldsymbol{x}_{-\underline{\omega }} \right|^{p+2}} \right] 
	\end{align*}
	suggests that it is integrable with respect to $\boldsymbol{x}$, which means that the two conditions of Theorem \ref{interchange} are satisfied. Therefore, we get 
	\begin{align}\label{partial xpi}
		&\sigma _{p+1}\partial _{x_{p_i}}T_{\Omega _{\underline{\omega }}}f\left( \boldsymbol{x} \right)\nonumber \\
		=&\int_{\Omega _{\underline{\omega }}}{\left( \alpha \frac{x_{p_i}-y_{p_i}}{\left| \boldsymbol{y}-\boldsymbol{x}_{\underline{\omega }} \right|^{p+2}}+\beta \frac{x_{p_i}-y_{p_i}}{\left| \boldsymbol{y}-\boldsymbol{x}_{-\underline{\omega }} \right|^{p+2}} \right) \left( D_{\boldsymbol{y}_{\underline{\omega }}}f\left( \boldsymbol{y} \right) \right) d\sigma _{\underline{\omega }}\left( \boldsymbol{y} \right)}\nonumber \\
		&-\int_{\partial \Omega _{\underline{\omega }}}{\left( \alpha \frac{x_{p_i}-y_{p_i}}{\left| \boldsymbol{y}-\boldsymbol{x}_{\underline{\omega }} \right|^{p+2}}+\beta \frac{x_{p_i}-y_{p_i}}{\left| \boldsymbol{y}-\boldsymbol{x}_{-\underline{\omega }} \right|^{p+2}} \right) \overline{d\boldsymbol{y}^*}f\left( \boldsymbol{y} \right)}.
	\end{align}
	And then, using Gauss theorem, we can get
	\begin{align*}
		&\quad \left( \int_{\partial \Omega _{\underline{\omega }}}{-\int_{\partial B_{\epsilon}}{}} \right) \left( \alpha \frac{x_{p_i}-y_{p_i}}{\left| \boldsymbol{y}-\boldsymbol{x}_{\underline{\omega }} \right|^{p+2}}+\beta \frac{x_{p_i}-y_{p_i}}{\left| \boldsymbol{y}-\boldsymbol{x}_{-\underline{\omega }} \right|^{p+2}} \right) \overline{d\boldsymbol{y}^*}f\left( \boldsymbol{y} \right) \\
		&=\int_{\partial \Omega _{\underline{\omega }}\backslash B_{\epsilon}}{\left[ \left( \alpha \frac{x_{p_i}-y_{p_i}}{\left| \boldsymbol{y}-\boldsymbol{x}_{\underline{\omega }} \right|^{p+2}}+\beta \frac{x_{p_i}-y_{p_i}}{\left| \boldsymbol{y}-\boldsymbol{x}_{-\underline{\omega }} \right|^{p+2}} \right) D_{\boldsymbol{y}_{\underline{\omega }}} \right] f\left( \boldsymbol{y} \right)}\\
		&\quad \quad \quad \quad \quad +\left( \alpha \frac{x_{p_i}-y_{p_i}}{\left| \boldsymbol{y}-\boldsymbol{x}_{\underline{\omega }} \right|^{p+2}}+\beta \frac{x_{p_i}-y_{p_i}}{\left| \boldsymbol{y}-\boldsymbol{x}_{-\underline{\omega }} \right|^{p+2}} \right) \left( D_{\boldsymbol{y}_{\underline{\omega }}}f\left( \boldsymbol{y} \right) \right) d\sigma _{\underline{\omega }}\left( \boldsymbol{y} \right).
	\end{align*} 
	Hence, we have 
	\begin{align*}
		&\sigma _{p+1}\partial _{x_{p_i}}T_{\Omega _{\underline{\omega }}}f\left( \boldsymbol{x} \right)\\
		=&\lim_{\epsilon \rightarrow 0} \int_{B_{\epsilon}}{\left( \alpha \frac{x_{p_i}-y_{p_i}}{\left| \boldsymbol{y}-\boldsymbol{x}_{\underline{\omega }} \right|^{p+2}}+\beta \frac{x_{p_i}-y_{p_i}}{\left| \boldsymbol{y}-\boldsymbol{x}_{-\underline{\omega }} \right|^{p+2}} \right) \left( D_{\boldsymbol{y}_{\underline{\omega }}}f\left( \boldsymbol{y} \right) \right) d\sigma _{\underline{\omega }}\left( \boldsymbol{y} \right)}\\
		&-\int_{\Omega _{\underline{\omega }}\backslash B_{\epsilon}}{\left[ \left( \alpha \frac{x_{p_i}-y_{p_i}}{\left| \boldsymbol{y}-\boldsymbol{x}_{\underline{\omega }} \right|^{p+2}}+\beta \frac{x_{p_i}-y_{p_i}}{\left| \boldsymbol{y}-\boldsymbol{x}_{-\underline{\omega }} \right|^{p+2}} \right) D_{\boldsymbol{y}_{\underline{\omega }}} \right] f\left( \boldsymbol{y} \right) d\sigma _{\underline{\omega }}\left( \boldsymbol{y} \right)}\\
		&-\int_{\partial B_{\epsilon}}{\left( \alpha \frac{x_{p_i}-y_{p_i}}{\left| \boldsymbol{y}-\boldsymbol{x}_{\underline{\omega }} \right|^{p+2}}+\beta \frac{x_{p_i}-y_{p_i}}{\left| \boldsymbol{y}-\boldsymbol{x}_{-\underline{\omega }} \right|^{p+2}} \right) \overline{d\boldsymbol{y}^*}f\left( \boldsymbol{y} \right)}.
	\end{align*}
	From the homogeneity of $\frac{x_{p_i}-y_{p_i}}{\left| \boldsymbol{y}-\boldsymbol{x}_{\underline{\omega }} \right|^{p+2}}$ and $\frac{x_{p_i}-y_{p_i}}{\left| \boldsymbol{y}-\boldsymbol{x}_{-\underline{\omega }} \right|^{p+2}}$, on the one hand, one can easily show that 
	\begin{align*}
		\lim_{\epsilon \rightarrow 0} \int_{B_{\epsilon}}{\left( \alpha \frac{x_{p_i}-y_{p_i}}{\left| \boldsymbol{y}-\boldsymbol{x}_{\underline{\omega }} \right|^{p+2}}+\beta \frac{x_{p_i}-y_{p_i}}{\left| \boldsymbol{y}-\boldsymbol{x}_{-\underline{\omega }} \right|^{p+2}} \right) \left( D_{\boldsymbol{y}_{\underline{\omega }}}f\left( \boldsymbol{y} \right) \right) d\sigma _{\underline{\omega }}\left( \boldsymbol{y} \right)}=0,
	\end{align*}
	and 
	\begin{align*}
		&\frac{x_{p_i}-y_{p_i}}{\left| \boldsymbol{y}-\boldsymbol{x}_{\pm \underline{\omega}} \right|^{p+2}}D_{\boldsymbol{y}_{\underline{\omega }}}=\frac{1}{-p}\partial _{x_{p_i}}\frac{1}{\left| \boldsymbol{y}-\boldsymbol{x}_{\pm \underline{\omega}} \right|^p}D_{\boldsymbol{y}_{\underline{\omega }}}\\
		=&\frac{1}{-p}\partial _{x_{p_i}}D_{\boldsymbol{y}_{\underline{\omega }}}\frac{1}{\left| \boldsymbol{y}-\boldsymbol{x}_{\pm \underline{\omega}} \right|^p}=\partial _{x_{p_i}}\frac{\overline{\boldsymbol{y}-\boldsymbol{x}_{\pm \underline{\omega}}}}{\left| \boldsymbol{y}-\boldsymbol{x}_{\pm \underline{\omega}} \right|^{p+2}}.
	\end{align*}
	Hence we get 
	\begin{align*}
		&\lim_{\epsilon \rightarrow 0} \int_{\partial B_{\epsilon}}{\left( \alpha \frac{x_{p_i}-y_{p_i}}{\left| \boldsymbol{y}-\boldsymbol{x}_{\underline{\omega }} \right|^{p+2}}+\beta \frac{x_{p_i}-y_{p_i}}{\left| \boldsymbol{y}-\boldsymbol{x}_{-\underline{\omega }} \right|^{p+2}} \right) \overline{d\boldsymbol{y}^*}f\left( \boldsymbol{y} \right)}\\
		=&\sigma _{p+1}e_i\left[ \alpha f\left( \boldsymbol{x}_{\underline{\omega }} \right) +\beta f\left( \boldsymbol{x}_{-\underline{\omega }} \right) \right] .
	\end{align*}
	These give us that 
	\begin{align*}
		&\sigma _{p+1}\partial _{x_{p_i}}T_{\Omega _{\underline{\omega }}}f\left( \boldsymbol{x} \right)\\
		=&\int_{\Omega _{\underline{\omega }}}{\left[ \partial _{x_{p_i}}\left( \alpha \frac{\overline{\boldsymbol{y}-\boldsymbol{x}_{\underline{\omega }}}}{\left| \boldsymbol{y}-\boldsymbol{x}_{\underline{\omega }} \right|^{p+2}}+\beta \frac{\overline{\boldsymbol{y}-\boldsymbol{x}_{-\underline{\omega }}}}{\left| \boldsymbol{y}-\boldsymbol{x}_{-\underline{\omega }} \right|^{p+2}} \right) \right] f\left( \boldsymbol{y} \right) d\sigma _{\underline{\omega }}\left( \boldsymbol{y} \right)}\\
		&-\sigma _{p+1}e_i\left[ \alpha f\left( \boldsymbol{x}_{\underline{\omega }} \right) +\beta f\left( \boldsymbol{x}_{-\underline{\omega }} \right) \right] \\
		=&\sigma _{p+1}\int_{\Omega _{\underline{\omega }}}{\left( \partial _{x_{p_i}}\mathcal{E} _{\boldsymbol{y}}\left( \boldsymbol{x} \right) \right) f\left( \boldsymbol{y} \right) d\sigma _{\underline{\omega }}\left( \boldsymbol{y} \right)}-\sigma _{p+1}e_i\left[ \alpha f\left( \boldsymbol{x}_{\underline{\omega }} \right) +\beta f\left( \boldsymbol{x}_{-\underline{\omega }} \right) \right] ,
	\end{align*}
	which leads to
	\begin{align*}
		\partial _{x_{p_i}}T_{\Omega _{\underline{\omega }}}f\left( \boldsymbol{x} \right) =\int_{\Omega _{\underline{\omega }}}{\left( \partial _{x_{p_i}}\mathcal{E} _{\boldsymbol{y}}\left( \boldsymbol{x} \right) \right) f\left( \boldsymbol{y} \right) d\sigma _{\underline{\omega }}\left( \boldsymbol{y} \right)}-e_i\left[ \alpha f\left( \boldsymbol{x}_{\underline{\omega }} \right) +\beta f\left( \boldsymbol{x}_{-\underline{\omega }} \right) \right] .
	\end{align*}
	Therefore, we get 
	\begin{align*}
		&D_{\boldsymbol{x}_p}T_{\Omega _{\underline{\omega }}}f\left( \boldsymbol{x} \right) =\sum_{i=0}^p{e_i\partial _{x_{p_i}}T_{\Omega _{\underline{\omega }}}f\left( \boldsymbol{x} \right)}\\
		=&\sum_{i=0}^p{e_i\left[ \int_{\Omega _{\underline{\omega }}}{\left( \partial _{x_{p_i}}\mathcal{E} _{\boldsymbol{y}}\left( \boldsymbol{x} \right) \right) f\left( \boldsymbol{y} \right) d\sigma _{\underline{\omega }}\left( \boldsymbol{y} \right)}-e_i\left[ \alpha f\left( \boldsymbol{x}_{\underline{\omega }} \right) +\beta f\left( \boldsymbol{x}_{-\underline{\omega }} \right) \right] \right]}\\
		=&\int_{\Omega _{\underline{\omega }}}{\left( D_{\boldsymbol{x}_p}\mathcal{E} _{\boldsymbol{y}}\left( \boldsymbol{x} \right) \right) f\left( \boldsymbol{y} \right) d\sigma _{\underline{\omega }}\left( \boldsymbol{y} \right)}+\left[ \alpha f\left( \boldsymbol{x}_{\underline{\omega }} \right) +\beta f\left( \boldsymbol{x}_{-\underline{\omega }} \right) \right].
	\end{align*}
	Next, we consider $\partial _{x_{q_i}}T_{\Omega _{\underline{\omega }}}f\left( \boldsymbol{x} \right) $ and we notice that 
	\begin{align*}
		\partial _{x_{q_i}}=\frac{\partial r}{\partial x_{q_i}}\cdot \frac{\partial}{\partial r}=\frac{x_{q_i}}{\left| \underline{\boldsymbol{x}}_q \right|}\cdot \frac{\partial}{\partial r}.
	\end{align*}
	Then, with a similar argument as applied to $\partial _{x_{p_i}}T_{\Omega _{\underline{\omega }}}f\left( \boldsymbol{x} \right) $, we get
	\begin{align*}
		\partial _{x_{q_i}}\frac{1}{\left| \boldsymbol{y}-\boldsymbol{x}_{\pm \underline{\omega }} \right|^p}=\frac{x_{q_i}}{\left| \underline{\boldsymbol{x}}_q \right|}\cdot \frac{\partial}{\partial r}\frac{1}{\left| \boldsymbol{y}-\boldsymbol{x}_{\pm \underline{\omega }} \right|^p}=\frac{x_{q_i}}{\left| \underline{\boldsymbol{x}}_q \right|}\cdot \left( -p \right) \frac{r\mp \tilde{r}}{\left| \boldsymbol{y}-\boldsymbol{x}_{\pm \underline{\omega }} \right|^{p+2}}.
	\end{align*}
	Hence, we get
	\begin{align}\label{partial xqi}
		&\sigma _{p+1}\partial _{x_{q_i}}T_{\Omega _{\underline{\omega }}}f\left( \boldsymbol{x} \right) \nonumber \\ 
		=&\frac{1}{ -p}\partial _{x_{q_i}}\int_{\Omega _{\underline{\omega }}}{\left( \alpha \frac{1}{\left| \boldsymbol{y}-\boldsymbol{x}_{\underline{\omega }} \right|^p}+\frac{1}{\left| \boldsymbol{y}-\boldsymbol{x}_{-\underline{\omega }} \right|^p} \right) \left( D_{\boldsymbol{y}_{\underline{\omega }}}f\left( \boldsymbol{y} \right) \right) d\sigma _{\underline{\omega }}\left( \boldsymbol{y} \right)} \nonumber \\
		&-\frac{1}{-p}\partial _{x_{q_i}}\int_{\partial \Omega _{\underline{\omega }}}{\left( \alpha \frac{1}{\left| \boldsymbol{y}-\boldsymbol{x}_{\underline{\omega }} \right|^p}+\frac{1}{\left| \boldsymbol{y}-\boldsymbol{x}_{-\underline{\omega }} \right|^p} \right) \overline{d\boldsymbol{y}^*}f\left( \boldsymbol{y} \right)} \nonumber\\
		=&\frac{x_{q_i}}{\left| \underline{\boldsymbol{x}}_q \right|}\int_{\Omega _{\underline{\omega }}}{\left( \alpha \frac{r-\tilde{r}}{\left| \boldsymbol{y}-\boldsymbol{x}_{\underline{\omega }} \right|^{p+2}}+\beta \frac{r+\tilde{r}}{\left| \boldsymbol{y}-\boldsymbol{x}_{-\underline{\omega }} \right|^{p+2}} \right) \left( D_{\boldsymbol{y}_{\underline{\omega }}}f\left( \boldsymbol{y} \right) \right) d\sigma _{\underline{\omega }}\left( \boldsymbol{y} \right)} \nonumber\\
		&-\frac{x_{q_i}}{\left| \underline{\boldsymbol{x}}_q \right|}\int_{\partial \Omega _{\underline{\omega }}}{\left( \alpha \frac{r-\tilde{r}}{\left| \boldsymbol{y}-\boldsymbol{x}_{\underline{\omega }} \right|^{p+2}}+\beta \frac{r+\tilde{r}}{\left| \boldsymbol{y}-\boldsymbol{x}_{-\underline{\omega }} \right|^{p+2}} \right) \overline{d\boldsymbol{y}^*}f\left( \boldsymbol{y} \right)}.
	\end{align}
	Further, Gauss theorem tells us that 
	\begin{align*}
		&\left( \int_{\partial \Omega _{\underline{\omega}}}{-\int_{\partial B_{\epsilon}}} \right) \left( \alpha \frac{r-\tilde{r}}{\left| \boldsymbol{y}-\boldsymbol{x}_{\underline{\omega }} \right|^{p+2}}+\beta \frac{r+\tilde{r}}{\left| \boldsymbol{y}-\boldsymbol{x}_{-\underline{\omega }} \right|^{p+2}} \right)\overline{d\boldsymbol{y}^*}f\left( \boldsymbol{y} \right)\\
		=&\int_{\partial \Omega _{\underline{\omega}}\backslash B_{\epsilon}}{\left[ \left( \alpha \frac{r-\tilde{r}}{\left| \boldsymbol{y}-\boldsymbol{x}_{\underline{\omega }} \right|^{p+2}}+\beta \frac{r+\tilde{r}}{\left| \boldsymbol{y}-\boldsymbol{x}_{-\underline{\omega }} \right|^{p+2}} \right) D_{\boldsymbol{y}_{\underline{\omega}}} \right] f\left( \boldsymbol{y} \right)}\\
		& +\left( \alpha \frac{r-\tilde{r}}{\left| \boldsymbol{y}-\boldsymbol{x}_{\underline{\omega }} \right|^{p+2}}+\beta \frac{r+\tilde{r}}{\left| \boldsymbol{y}-\boldsymbol{x}_{-\underline{\omega }} \right|^{p+2}} \right) \left( D_{\boldsymbol{y}_{\underline{\omega}}}f\left( \boldsymbol{y} \right) \right) d\sigma _{\underline{\omega}}\left( \boldsymbol{y} \right) .
	\end{align*}
	Applying Gauss theorem, we have
	\begin{align*}
		&\sigma _{p+1}\partial _{x_{q_i}}T_{\Omega _{\underline{\omega }}}f\left( \boldsymbol{x} \right) \\
		=&\frac{x_{q_i}}{\left| \underline{\boldsymbol{x}}_q \right|}\lim_{\epsilon \rightarrow 0} \int_{B_{\epsilon}}{\left( \alpha \frac{r-\tilde{r}}{\left| \boldsymbol{y}-\boldsymbol{x}_{\underline{\omega }} \right|^{p+2}}+\beta \frac{r+\tilde{r}}{\left| \boldsymbol{y}-\boldsymbol{x}_{-\underline{\omega }} \right|^{p+2}} \right) \left( D_{\boldsymbol{y}_{\underline{\omega }}}f\left( \boldsymbol{y} \right) \right) d\sigma _{\underline{\omega }}\left( \boldsymbol{y} \right)}\\
		&-\frac{x_{q_i}}{\left| \underline{\boldsymbol{x}}_q \right|}\int_{\Omega _{\underline{\omega }}\backslash B_{\epsilon}}{\left[ \left( \alpha \frac{r-\tilde{r}}{\left| \boldsymbol{y}-\boldsymbol{x}_{\underline{\omega }} \right|^{p+2}}+\beta \frac{r+\tilde{r}}{\left| \boldsymbol{y}-\boldsymbol{x}_{-\underline{\omega }} \right|^{p+2}} \right) D_{\boldsymbol{y}_{\underline{\omega }}} \right] f\left( \boldsymbol{y} \right) d\sigma _{\underline{\omega }}\left( \boldsymbol{y} \right)}\\
		&-\frac{x_{q_i}}{\left| \underline{\boldsymbol{x}}_q \right|}\int_{\partial B_{\epsilon}}{\left( \alpha \frac{r-\tilde{r}}{\left| \boldsymbol{y}-\boldsymbol{x}_{\underline{\omega }} \right|^{p+2}}+\beta \frac{r+\tilde{r}}{\left| \boldsymbol{y}-\boldsymbol{x}_{-\underline{\omega }} \right|^{p+2}} \right) \overline{d\boldsymbol{y}^*}f\left( \boldsymbol{y} \right)}.
	\end{align*}
	Now, we notice that 
	\begin{align*}
		\frac{r\mp \tilde{r}}{\left| \boldsymbol{y}-\boldsymbol{x}_{\pm \underline{\omega }} \right|^{p+2}}=\frac{1}{-p}\frac{1}{\left| \boldsymbol{y}-\boldsymbol{x}_{\pm \underline{\omega }} \right|^p}\frac{\partial}{\partial r},
	\end{align*}
	which leads to 
	\begin{align*}
		&\frac{r\mp \tilde{r}}{\left| \boldsymbol{y}-\boldsymbol{x}_{\pm \underline{\omega }} \right|^{p+2}}D_{\boldsymbol{y}_{\underline{\omega }}}=\frac{1}{-p}\frac{1}{\left| \boldsymbol{y}-\boldsymbol{x}_{\pm \underline{\omega }} \right|^p}\frac{\partial}{\partial r}D_{\boldsymbol{y}_{\underline{\omega }}}\\
		=&\frac{1}{-p}\frac{1}{\left| \boldsymbol{y}-\boldsymbol{x}_{\pm \underline{\omega }} \right|^p}D_{\boldsymbol{y}_{\underline{\omega }}}\frac{\partial}{\partial r}=\frac{\overline{\boldsymbol{y}-\boldsymbol{x}_{\pm \underline{\omega }}}}{\left| \boldsymbol{y}-\boldsymbol{x}_{\pm \underline{\omega }} \right|^{p+2}}\frac{\partial}{\partial r}.
	\end{align*}
	Therefore, we can obtain 
	\begin{align*}
		&\sigma _{p+1}\partial _{x_{q_i}}T_{\Omega _{\underline{\omega }}}f\left( \boldsymbol{x} \right)\\
		&=\frac{x_{q_i}}{\left| \underline{\boldsymbol{x}}_q \right|}\lim_{\epsilon \rightarrow 0} \int_{\Omega _{\underline{\omega }}}{\frac{\partial}{\partial r}\left( \alpha \frac{\overline{\boldsymbol{y}-\boldsymbol{x}_{\underline{\omega }}}}{\left| \boldsymbol{y}-\boldsymbol{x}_{\underline{\omega }} \right|^{p+2}}+\beta \frac{\overline{\boldsymbol{y}-\boldsymbol{x}_{-\underline{\omega }}}}{\left| \boldsymbol{y}-\boldsymbol{x}_{-\underline{\omega }} \right|^{p+2}} \right) f\left( \boldsymbol{y} \right) d\sigma _{\underline{\omega }}\left( \boldsymbol{y} \right)}\\
		&\quad -\frac{x_{q_i}}{\left| \underline{\boldsymbol{x}}_q \right|}\int_{\partial B_{\epsilon}}{\left( \alpha \frac{r-\tilde{r}}{\left| \boldsymbol{y}-\boldsymbol{x}_{\underline{\omega }} \right|^{p+2}}+\beta \frac{r+\tilde{r}}{\left| \boldsymbol{y}-\boldsymbol{x}_{-\underline{\omega }} \right|^{p+2}} \right) \overline{d\boldsymbol{y}^*}f\left( \boldsymbol{y} \right)}\\
		&=\frac{x_{q_i}}{\left| \underline{\boldsymbol{x}}_q \right|}\sigma _{p+1}\left[ \int_{\Omega _{\underline{\omega }}}{\left( \frac{\partial}{\partial r}\mathcal{E} _{\boldsymbol{y}}\left( \boldsymbol{x} \right) \right) f\left( \boldsymbol{y} \right) d\sigma _{\underline{\omega }}\left( \boldsymbol{y} \right)}-\left( \alpha \underline{\omega }f\left( \boldsymbol{x}_{\underline{\omega }} \right) -\beta \underline{\omega }f\left( \boldsymbol{x}_{-\underline{\omega }} \right) \right) \right] ,
	\end{align*}
	which gives us 
	\begin{align*}
		&\partial _{x_{q_i}}T_{\Omega _{\underline{\omega }}}f\left( \boldsymbol{x} \right)\\ =&\frac{x_{q_i}}{\left| \underline{\boldsymbol{x}}_q \right|}\left[ \int_{\Omega _{\underline{\omega }}}{\left( \frac{\partial}{\partial r}\mathcal{E} _{\boldsymbol{y}}\left( \boldsymbol{x} \right) \right) f\left( \boldsymbol{y} \right) d\sigma _{\underline{\omega }}\left( \boldsymbol{y} \right)}-\left( \alpha \underline{\omega }f\left( \boldsymbol{x}_{\underline{\omega }} \right) -\beta \underline{\omega }f\left( \boldsymbol{x}_{-\underline{\omega }} \right) \right) \right] .
	\end{align*}
	Further, since
	\begin{align*}
		\mathbb{E} _{\underline{\boldsymbol{x}}_q}=\sum_{i=p+1}^{p+q}{x_i\partial _{x_{q_i}}}=\sum_{i=p+1}^{p+q}{x_i\frac{\partial r}{\partial x_{q_i}}\cdot}\frac{\partial}{\partial r}=\left| \underline{\boldsymbol{x}}_q \right|\frac{\partial}{\partial r},
	\end{align*}
	we get
	\begin{align*}
		&\sigma _{p+1}\mathbb{E} _{\underline{\boldsymbol{x}}_q}T_{\Omega _{\underline{\omega }}}f\left( \boldsymbol{x} \right)\\
		=&\left| \underline{\boldsymbol{x}}_q \right|\sigma _{p+1}\bigg[ \int_{\Omega _{\underline{\omega }}}{\left( \frac{\partial}{\partial r}\mathcal{E} _{\boldsymbol{y}}\left( \boldsymbol{x} \right) \right) f\left( \boldsymbol{y} \right) d\sigma _{\underline{\omega }}\left( \boldsymbol{y} \right)}
		-\left( \alpha \underline{\omega }f\left( \boldsymbol{x}_{\underline{\omega }} \right) -\beta \underline{\omega }f\left( \boldsymbol{x}_{-\underline{\omega }} \right) \right) \bigg] \\
		=&\frac{\left| \underline{\boldsymbol{x}}_q \right|^2 \sigma _{p+1}}{\underline{\boldsymbol{x}}_q}\bigg[ \int_{\Omega _{\underline{\omega }}}{\underline{\eta }\bigg(\frac{\partial}{\partial r}\mathcal{E} _{\boldsymbol{y}}\left( \boldsymbol{x} \right) \bigg) f\left( \boldsymbol{y} \right) d\sigma _{\underline{\omega }}\left( \boldsymbol{y} \right)}\\
		&-\underline{\eta }\left( \alpha \underline{\omega }f\left( \boldsymbol{x}_{\underline{\omega }} \right) -\beta \underline{\omega }f\left( \boldsymbol{x}_{-\underline{\omega }} \right) \right) \bigg]  \\
		=&\frac{\left| \underline{\boldsymbol{x}}_q \right|^2\sigma _{p+1}}{\underline{\boldsymbol{x}}_q}\left[ \int_{\Omega _{\underline{\omega }}}{\underline{\eta }\left( \frac{\partial}{\partial r}\mathcal{E} _{\boldsymbol{y}}\left( \boldsymbol{x} \right) \right) f\left( \boldsymbol{y} \right) d\sigma _{\underline{\omega }}\left( \boldsymbol{y} \right)}+\left( \alpha f\left( \boldsymbol{x}_{\underline{\omega }} \right) +\beta f\left( \boldsymbol{x}_{-\underline{\omega }} \right) \right) \right] \\
		=&\frac{\left| \underline{\boldsymbol{x}}_q \right|^2}{\underline{\boldsymbol{x}}_q}\cdot \sigma _{p+1}\left[ \int_{\Omega _{\underline{\omega }}}{\underline{\eta }\left( \frac{\partial}{\partial r}\mathcal{E} _{\boldsymbol{y}}\left( \boldsymbol{x} \right) \right) f\left( \boldsymbol{y} \right) d\sigma _{\underline{\omega }}\left( \boldsymbol{y} \right)}+f\left( \boldsymbol{x} \right) \right] ,
	\end{align*}
	which shows us that
	\begin{align*}
		\mathbb{E} _{\underline{\boldsymbol{x}}_q}T_{\Omega _{\underline{\omega }}}f\left( \boldsymbol{x} \right) =\frac{\left| \underline{\boldsymbol{x}}_q \right|^2}{\underline{\boldsymbol{x}}_q}\left[ \int_{\Omega _{\underline{\omega }}}{\underline{\eta }\left( \frac{\partial}{\partial r}\mathcal{E} _{\boldsymbol{y}}\left( \boldsymbol{x} \right) \right) f\left( \boldsymbol{y} \right) d\sigma _{\underline{\omega }}\left( \boldsymbol{y} \right)}+f\left( \boldsymbol{x} \right) \right] .
	\end{align*}
	According to the definition of $\bar{\vartheta}$ , we have 
	\begin{align*}
		\bar{\vartheta}T_{\Omega _{\underline{\omega}}}f\left( \boldsymbol{x} \right) &=D_{\boldsymbol{x}_p}T_{\Omega _{\underline{\omega }}}f\left( \boldsymbol{x} \right) +\frac{\underline{\boldsymbol{x}}_q}{\left| \underline{\boldsymbol{x}}_q \right|^2}\mathbb{E} _{\underline{\boldsymbol{x}}_q}T_{\Omega _{\underline{\omega }}}f\left( \boldsymbol{x} \right) \\
		&=\int_{\Omega _{\underline{\omega}}}{\left( D_{\boldsymbol{x}_p}+\underline{\eta}\frac{\partial}{\partial r} \right) f\left( \boldsymbol{y} \right) d\sigma _{\underline{\omega}}\left( \boldsymbol{y} \right)}+2f\left( \boldsymbol{x} \right) \\
		&=\int_{\Omega _{\underline{\omega}}}{\bar{\vartheta}f\left( \boldsymbol{y} \right) d\sigma _{\underline{\omega}}\left( \boldsymbol{y} \right)}+2f\left( \boldsymbol{x} \right) 
		=2f\left( \boldsymbol{x} \right),
	\end{align*}
	which completes the proof.
\end{proof}
\begin{remark}
	The operator $\bar{\vartheta}$ restricted to $\Omega _{\underline{\eta }}$ is actually equal to the operator $D_{\underline{\eta}}$ mentioned in \cite{Xu1}.
\end{remark}
\begin{lemma}\label{lemma xpiT}
	Let $\Omega _D\subset \mathbb{R} _{*}^{p+q+1}$ be a bounded and $p$-symmetric domain, $f\in C^1\left( \overline{\Omega _D} \right) \cap \mathcal{G} \mathcal{S} \left( \Omega _D \right)$, then we have
	\begin{align*}
		\partial _{x_{q_i}}T_{\Omega _D}f\left( \boldsymbol{x} \right) =\int_{\mathbb{S} ^+}{\partial _{x_{q_i}}T_{\Omega _{\underline{\omega}}}f\left( \boldsymbol{x} \right) dS_{\underline{\omega}} \left( \boldsymbol{y}\right)}.
	\end{align*}
\end{lemma}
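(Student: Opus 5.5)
The plan is to write $T_{\Omega_D}$ as an iterated integral: first over the slice $\Omega_{\underline{\omega}}$, then over the direction $\underline{\omega}$. After that, Theorem \ref{interchange} moves $\partial_{x_{q_i}}$ inside the outer integral.

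\textbf{Step 1: slice decomposition.} Since $\Omega_D$ is $p$-symmetric, every $\boldsymbol{y}\in\Omega_D$ can be written as $\boldsymbol{y}=\boldsymbol{y}_p+\tilde r\underline{\omega}$ with $\tilde r>0$ (recall $\Omega_D\subset\mathbb{R}^{p+q+1}_*$) and $\underline{\omega}\in\mathbb{S}$. Use polar coordinates in the $\mathbb{R}^q$ variable:
\[
d\sigma(\boldsymbol{y})=\tilde r^{\,q-1}\,d\tilde r\,d\boldsymbol{y}_p\,dS(\underline{\omega}).
\]
Each slice $\Omega_{\underline{\omega}}$ contains both the $\underline{\omega}$ and $-\underline{\omega}$ halves, so integrating over $\underline{\omega}\in\mathbb{S}^+$ (a half sphere) with full slices covers $\Omega_D$ exactly once. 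The factor $\tilde r^{\,q-1}$ is then cancelled by the normalisation $\sigma_{q-1}|\underline{\boldsymbol{y}}_q|^{q-1}$ in $K_{\boldsymbol{y}}$, up to the constant convention already built into the measure $dS_{\underline{\omega}}$ of the statement. This gives
\[
T_{\Omega_D}f(\boldsymbol{x})=\int_{\mathbb{S}^+}T_{\Omega_{\underline{\omega}}}f(\boldsymbol{x})\,dS_{\underline{\omega}}.
\]
I will check this identity first, tracking the constant so that it matches the normalisation of $K_{\boldsymbol{y}}$ from \cite{Hu}.

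\textbf{Step 2: interchange derivative and outer integral.} Fix $\boldsymbol{x}$ and regard $x_{q_i}$ as the parameter $t$ in Theorem \ref{interchange}, with $X=\mathbb{S}^+$. Theorem \ref{Theta T=2f} supplies the existence of $\partial_{x_{q_i}}T_{\Omega_{\underline{\omega}}}f(\boldsymbol{x})$ together with its explicit formula:
\[
\frac{x_{q_i}}{|\underline{\boldsymbol{x}}_q|}\Big[\int_{\Omega_{\underline{\omega}}}\partial_r\mathcal{E}_{\boldsymbol{y}}(\boldsymbol{x})f\,d\sigma_{\underline{\omega}}-\big(\alpha\underline{\omega}f(\boldsymbol{x}_{\underline{\omega}})-\beta\underline{\omega}f(\boldsymbol{x}_{-\underline{\omega}})\big)\Big].
\]
What remains is an integrable majorant $g(\underline{\omega})$, uniform for $x_{q_i}$ in a small interval around the given value. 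Write $\mathcal{E}_{\boldsymbol{y}}$ in terms of $\alpha,\beta$ and $E_{\boldsymbol{y}}(\boldsymbol{x}_{\pm\underline{\omega}})$.
\begin{itemize}
\item The coefficients satisfy $|\alpha|,|\beta|\le 1$.
\item The boundary terms are bounded by $2\sup|f|$, which is finite because $f\in C^1(\overline{\Omega_D})$.
\item The factor $x_{q_i}/|\underline{\boldsymbol{x}}_q|$ is at most $1$ in absolute value.
\end{itemize}

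\textbf{Main obstacle.} The hard part is a uniform bound on the principal-value integral $\int_{\Omega_{\underline{\omega}}}\partial_r E_{\boldsymbol{y}}(\boldsymbol{x}_{\pm\underline{\omega}})f(\boldsymbol{y})\,d\sigma_{\underline{\omega}}$. This kernel has homogeneity $-(p+2)$ in $\mathbb{R}^{p+2}$, which is a strongly singular (Calderón--Zygmund type) kernel, so the integrand is not absolutely integrable.

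To handle it, I will use the form of this integral obtained in the proof of Theorem \ref{Theta T=2f}, before the derivative was moved onto the kernel. There it equals:
\begin{itemize}
\item a weakly singular volume integral with kernel of order $-(p+1)$ against $D_{\boldsymbol{y}_{\underline{\omega}}}f$, plus
\item a boundary integral over $\partial\Omega_{\underline{\omega}}$, which stays away from $\boldsymbol{x}_{\pm\underline{\omega}}$ since $\boldsymbol{x}$ is interior.
\end{itemize}
Both pieces are bounded by $C\big(\|f\|_{C^1}+\sup|f|\big)$. The constant $C$ depends only on $\operatorname{diam}\Omega_D$ and on $\operatorname{dist}(\boldsymbol{x},\partial\Omega_D)$. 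By $p$-symmetry, all slices are rotations of one another, so $C$ is independent of $\underline{\omega}$, and it stays uniform for $\boldsymbol{x}$ in a compact neighbourhood.

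With this, $g$ can be taken constant, which is integrable on the compact set $\mathbb{S}^+$. Theorem \ref{interchange} then gives the claimed formula.
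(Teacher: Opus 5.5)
Your plan is essentially the paper's proof. The paper also writes $T_{\Omega_D}f$ as an integral over $\mathbb{S}^+$ of the slice transforms $T_{\Omega_{\underline{\omega}}}f$ using spherical coordinates in $\underline{\boldsymbol{y}}_q$, then checks the hypotheses of Theorem \ref{interchange} by going back to the pre-Gauss forms (\ref{partial xpi}) and (\ref{partial xqi}), where the derivative is a weakly singular volume integral against $D_{\boldsymbol{y}_{\underline{\omega}}}f$ (bounded in polar coordinates about $\boldsymbol{x}_{\underline{\omega}}$) plus a boundary integral bounded by a constant. The normalisation you absorb into $dS_{\underline{\omega}}$ is the factor $1/\sigma_{q-1}$ that the paper's proof carries explicitly and its displayed statement omits, and your uniformity in $x_{q_i}$ and $\underline{\omega}$ (all slices being isometric copies of $D$) is a slightly more careful version of the paper's pointwise estimate.
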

\begin{proof}
	Let $\boldsymbol{y} = \boldsymbol{y}_p + \boldsymbol{y}_q \in \Omega_D$. By representing $\boldsymbol{y}_q$ in spherical coordinates as $\boldsymbol{y}_q = r\underline{\omega}$ with $\underline{\omega} \in \mathbb{S}$, we can express the volume element $d\sigma (\boldsymbol{y})$ as  $d\sigma (\boldsymbol{y}) = r^{q-1} \, d\sigma _{\underline{\omega}}(\boldsymbol{y}) \, dS_{\underline{\omega}} \left( \boldsymbol{y}\right)$. Consequently,
	\begin{align*}
		T_{\Omega _D}f\left( \boldsymbol{x} \right) &=-\int_{\Omega _D}{K_{\boldsymbol{y}}\left( \boldsymbol{x} \right) f\left( \boldsymbol{y} \right) d\sigma \left( \boldsymbol{y} \right)}\\
		&=-\int_{\mathbb{S} ^+}{\int_{\Omega _{\underline{\omega }}}{\frac{\mathcal{E} _{\boldsymbol{y}}\left( \boldsymbol{x} \right)}{\sigma _{q-1}|\underline{\boldsymbol{y}}_q|^{q-1}}\cdot f\left( \boldsymbol{y} \right) r^{q-1}d\sigma _{\underline{\omega }}\left( \boldsymbol{y} \right)}dS_{\underline{\omega}} \left( \boldsymbol{y}\right)}\\
		&=-\frac{1}{\sigma _{q-1}}\int_{\mathbb{S} ^+}{\int_{\Omega _{\underline{\omega }}}{\mathcal{E} _{\boldsymbol{y}}\left( \boldsymbol{x} \right) f\left( \boldsymbol{y} \right) d\sigma _{\underline{\omega }}\left( \boldsymbol{y} \right)}dS_{\underline{\omega}} \left( \boldsymbol{y}\right)}\\
		&=\frac{1}{\sigma _{q-1}}\int_{\mathbb{S} ^+}{T_{\Omega _{\underline{\omega }}}f\left( \boldsymbol{x} \right) dS_{\underline{\omega}} \left( \boldsymbol{y}\right)}.
	\end{align*}
	Now, we proceed to validate the two prerequisites stated in Theorem \ref{interchange}. Initially, the existence of $\partial _{x_{q_i}}T_{\Omega _{\underline{\omega }}}f\left( \boldsymbol{x} \right)$ has been previously substantiated in the preceding theorem. Subsequently, based on the reasoning of the aforementioned theorem, coupled with equations $(\ref{partial xpi})$ and $(\ref{partial xqi})$, and fact that $f\in \mathcal{G} \mathcal{S} \left( \Omega _D \right)$, we infer that
	\begin{align*}
		&\sigma _{p+1}\partial _{x_{p_i}}T_{\Omega _{\underline{\omega }}}f\left( \boldsymbol{x} \right)\\
		=&\int_{\Omega _{\underline{\omega }}}{\left( \alpha \frac{x_{p_i}-y_{p_i}}{\left| \boldsymbol{y}-\boldsymbol{x}_{\underline{\omega }} \right|^{p+2}}+\beta \frac{x_{p_i}-y_{p_i}}{\left| \boldsymbol{y}-\boldsymbol{x}_{-\underline{\omega }} \right|^{p+2}} \right) \left( D_{\boldsymbol{y}_{\underline{\omega }}}f\left( \boldsymbol{y} \right) \right) d\sigma _{\underline{\omega }}\left( \boldsymbol{y} \right)}\\
		&-\int_{\partial \Omega _{\underline{\omega }}}{\left( \alpha \frac{x_{p_i}-y_{p_i}}{\left| \boldsymbol{y}-\boldsymbol{x}_{\underline{\omega }} \right|^{p+2}}+\beta \frac{x_{p_i}-y_{p_i}}{\left| \boldsymbol{y}-\boldsymbol{x}_{-\underline{\omega }} \right|^{p+2}} \right) \overline{d\boldsymbol{y}^*}f\left( \boldsymbol{y} \right)},\\
		&\sigma _{p+1}\partial _{x_{q_i}}T_{\Omega _{\underline{\omega }}}f\left( \boldsymbol{x} \right) \\ 
		=&\frac{x_{q_i}}{\left| \underline{\boldsymbol{x}}_q \right|}\int_{\Omega _{\underline{\omega }}}{\left( \alpha \frac{r-\tilde{r}}{\left| \boldsymbol{y}-\boldsymbol{x}_{\underline{\omega }} \right|^{p+2}}+\beta \frac{r+\tilde{r}}{\left| \boldsymbol{y}-\boldsymbol{x}_{-\underline{\omega }} \right|^{p+2}} \right) \left( D_{\boldsymbol{y}_{\underline{\omega }}}f\left( \boldsymbol{y} \right) \right) d\sigma _{\underline{\omega }}\left( \boldsymbol{y} \right)} \\
		&-\frac{x_{q_i}}{\left| \underline{\boldsymbol{x}}_q \right|}\int_{\partial \Omega _{\underline{\omega }}}{\left( \alpha \frac{r-\tilde{r}}{\left| \boldsymbol{y}-\boldsymbol{x}_{\underline{\omega }} \right|^{p+2}}+\beta \frac{r+\tilde{r}}{\left| \boldsymbol{y}-\boldsymbol{x}_{-\underline{\omega }} \right|^{p+2}} \right) \overline{d\boldsymbol{y}^*}f\left( \boldsymbol{y} \right)}.
	\end{align*}
	Given the homogeneity of  $\frac{x_{p_i}-y_{p_i}}{|\boldsymbol{y}-\boldsymbol{x}_{\pm \underline{\omega }}|^{p+2}}$ and $\frac{r\mp \tilde{r}}{|\boldsymbol{y}-\boldsymbol{x}_{\pm \underline{\omega }}|^{p+2}}$ and $f\in C^1\left( \overline{\Omega _D} \right) $, it becomes evident that $\partial _{x_{p_i}}T_{\Omega _{\underline{\omega }}}f\left( \boldsymbol{x} \right)$, $\partial _{x_{q_i}}T_{\Omega _{\underline{\omega }}}f\left( \boldsymbol{x} \right)$ are integrable over $\mathbb{S}$ for all $\boldsymbol{x}\in \Omega _D$. Indeed, it is evident that 
	\begin{align*}
		|\partial _{x_{p_i}}T_{\Omega _{\underline{\omega }}}f\left( \boldsymbol{x} \right) |\leqslant &C_1\int_{\Omega _{\underline{\omega }}}{\frac{|x_{p_i}-y_{p_i}|}{|\boldsymbol{y}-\boldsymbol{x}_{\underline{\omega }}|^{p+2}}|D_{\boldsymbol{y}_{\underline{\omega }}}f\left( \boldsymbol{y} \right) |d\sigma _{\underline{\omega }}\left( \boldsymbol{y} \right)}\\
		&+C_2\int_{\Omega _{\underline{\omega }}}{\frac{|x_{p_i}-y_{p_i}|}{|\boldsymbol{y}-\boldsymbol{x}_{-\underline{\omega }}|^{p+2}}|D_{\boldsymbol{y}_{\underline{\omega }}}f\left( \boldsymbol{y} \right) |d\sigma _{\underline{\omega }}\left( \boldsymbol{y} \right)}+C^{\prime}.
	\end{align*}
	Since the integrals above are well-defined as Cauchy's principal values, we consider, for example, the limit of the integral  
	\begin{align*}
		\int_{\Omega _{\underline{\omega }}\backslash B_{\epsilon}( \boldsymbol{x}_{\underline{\omega}} )}{\frac{|x_{p_i}-y_{p_i}|}{|\boldsymbol{y}-\boldsymbol{x}_{\underline{\omega }}|^{p+2}}|D_{\boldsymbol{y}_{\underline{\omega }}}f\left( \boldsymbol{y} \right) |d\sigma _{\underline{\omega}}\left( \boldsymbol{y} \right)}.
	\end{align*}
	When using spherical coordinates, we observe that $d\sigma (\boldsymbol{y})$ incorporates $|\boldsymbol{y}-\boldsymbol{x}_{\underline{\omega }}|$ as a factor. Additionally, given $f \in C^1(\varOmega_D)$ and that $\Omega_D$ is bounded and closed, we can select $R > 0$ such that $\Omega _D\subset B_R\left( \boldsymbol{x}_{\underline{\omega}} \right)$ and we assume that $f\left( \boldsymbol{y} \right) =0$. Furthermore, we assume $f(\boldsymbol{y}) = 0$ for $\boldsymbol{y}\in B_R\left( \boldsymbol{x}_{\underline{\omega}} \right) \backslash \Omega _D$.  Then, we set $\boldsymbol{y}=\boldsymbol{x}_{\underline{\omega }}+s\boldsymbol{t}$, where $\boldsymbol{t}=\boldsymbol{t}_p+\underline{\omega }t_1\in \Omega _{\underline{\omega }}$ and we get
	\begin{align*}
		&\int_{\mathbb{S} ^+}{\int_{\Omega _{\underline{\omega }}\backslash B_{\epsilon}\left( \boldsymbol{x}_{\underline{\omega }} \right)}{\frac{|x_{p_i}-y_{p_i}|}{|\boldsymbol{y}-\boldsymbol{x}_{\underline{\omega }}|^{p+2}}|D_{\boldsymbol{y}_{\underline{\omega }}}f\left( \boldsymbol{y} \right) |d\sigma _{\underline{\omega }}\left( \boldsymbol{y} \right)}}dS_{\underline{\omega}} \left( \boldsymbol{y}\right)\\
		=&\int_{\epsilon}^R{\int_{|\boldsymbol{t}|=1}{\frac{|st_{p_i}|}{|-s\boldsymbol{t}|^{p+2}}\cdot s^{p+1}|D_{\boldsymbol{t}_{\underline{\omega }}}f\left( \boldsymbol{x}_{\underline{\omega }}+s\boldsymbol{t} \right) |}d\sigma _{\underline{\omega }}\left( \boldsymbol{t} \right)}dS_{\underline{\omega}} \left( \boldsymbol{t}\right)\\
		\leqslant &C\int_{\epsilon}^R{\int_{|\boldsymbol{t}|=1}{|D_{\boldsymbol{t}_{\underline{\omega }}}f\left( \boldsymbol{x}_{\underline{\omega }}+s\boldsymbol{t} \right) |}d\sigma _{\underline{\omega }}\left( \boldsymbol{t} \right)}dS_{\underline{\omega}} \left( \boldsymbol{t}\right)\leqslant C.
	\end{align*}
	Since $f \in C^1(\overline{\Omega_D})$, the integral is finite, verifying the second condition of Theorem \ref{interchange}, thus completing the proof. An analogous argument holds for $\partial _{x_{q_i}}T_{\Omega _{\underline{\omega }}}f\left( \boldsymbol{x} \right)$.
\end{proof}
\begin{theorem}
	Let $\Omega _D\subset \mathbb{R} _{*}^{p+q+1}$ be a bounded and $p$-symmetric domain, $f\in C^1\left( \overline{\Omega _D} \right) \cap \mathcal{GS} \left( \Omega _D \right)$, then we have
	\begin{align*}
		\bar{\vartheta}_{\boldsymbol{x}}T_{\Omega _D}f\left( \boldsymbol{x} \right) =f\left( \boldsymbol{x} \right).
	\end{align*}
\end{theorem}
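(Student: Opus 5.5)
The plan is to reduce the statement to the slice identity of Theorem \ref{Theta T=2f}, using the slice decomposition of $T_{\Omega_D}$ found in the proof of Lemma \ref{lemma xpiT}. That proof gives, with the factor $|\underline{\boldsymbol{y}}_q|^{q-1}$ in $K_{\boldsymbol{y}}$ cancelling the Jacobian $r^{q-1}$ of spherical coordinates in the $q$-variables,
\begin{align*}
T_{\Omega_D}f(\boldsymbol{x})=\frac{1}{\sigma_{q-1}}\int_{\mathbb{S}^+}T_{\Omega_{\underline{\omega}}}f(\boldsymbol{x})\,dS_{\underline{\omega}}.
\end{align*}
Here $\mathbb{S}^+$ is a half-sphere. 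It suffices because each slice $\Omega_{\underline{\omega}}$ already contains the points $\boldsymbol{y}_p\pm\tilde r\underline{\omega}$, so integrating over all of $\mathbb{S}$ would count every point twice. I will first recheck this convention, since it fixes the final constant.

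Next I will apply $\bar{\vartheta}_{\boldsymbol{x}}=D_{\boldsymbol{x}_p}+\frac{\underline{\boldsymbol{x}}_q}{|\underline{\boldsymbol{x}}_q|^2}\mathbb{E}_{\underline{\boldsymbol{x}}_q}$ and move every derivative inside the $\underline{\omega}$-integral. For the $\partial_{x_{q_i}}$ derivatives this is exactly Lemma \ref{lemma xpiT}. For the $\partial_{x_{p_i}}$ derivatives the same argument applies: that lemma's proof already bounds $|\partial_{x_{p_i}}T_{\Omega_{\underline{\omega}}}f(\boldsymbol{x})|$ uniformly in $\underline{\omega}$, which supplies the dominating function required by Theorem \ref{interchange}. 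The coefficients $\underline{\boldsymbol{x}}_q/|\underline{\boldsymbol{x}}_q|^2$ depend only on $\boldsymbol{x}$ and are finite because $\Omega_D\subset\mathbb{R}^{p+q+1}_*$, so they pass through the integral. This yields
\begin{align*}
\bar{\vartheta}_{\boldsymbol{x}}T_{\Omega_D}f(\boldsymbol{x})=\frac{1}{\sigma_{q-1}}\int_{\mathbb{S}^+}\bar{\vartheta}_{\boldsymbol{x}}T_{\Omega_{\underline{\omega}}}f(\boldsymbol{x})\,dS_{\underline{\omega}}.
\end{align*}

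Since $f\in\mathcal{GS}(\Omega_D)$, Theorem \ref{Theta T=2f} gives $\bar{\vartheta}T_{\Omega_{\underline{\omega}}}f(\boldsymbol{x})=2f(\boldsymbol{x})$ for every $\underline{\omega}$. The representation formula, Theorem \ref{*Representation Formula*}, is what lets the boundary terms $\alpha f(\boldsymbol{x}_{\underline{\omega}})+\beta f(\boldsymbol{x}_{-\underline{\omega}})$ collapse to $f(\boldsymbol{x})$ independently of $\underline{\omega}$. The integrand is therefore the constant $2f(\boldsymbol{x})$. The half-sphere has measure $\sigma_{q-1}/2$, so the result is $\frac{1}{\sigma_{q-1}}\cdot\frac{\sigma_{q-1}}{2}\cdot 2f(\boldsymbol{x})=f(\boldsymbol{x})$.

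I expect the main obstacle to be the bookkeeping of constants: matching the factor $2$ in Theorem \ref{Theta T=2f} with the measure of $\mathbb{S}^+$ and the normalization $\sigma_{q-1}$ in $K_{\boldsymbol{y}}$. A secondary issue is that the dominated-convergence bound must hold uniformly near the singular points $\boldsymbol{x}_{\pm\underline{\omega}}$. I will handle this exactly as in Lemma \ref{lemma xpiT}, using polar coordinates centred at $\boldsymbol{x}_{\pm\underline{\omega}}$ together with the boundedness of $D_{\boldsymbol{y}_{\underline{\omega}}}f$ on $\overline{\Omega_D}$. The boundary terms over $\partial\Omega_{\underline{\omega}}$ stay away from the singularities and are bounded uniformly in $\underline{\omega}$.
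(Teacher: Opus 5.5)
Your outline is, step for step, the paper's own proof. It uses the slice decomposition $T_{\Omega_D}f=\frac{1}{\sigma_{q-1}}\int_{\mathbb{S}^+}T_{\Omega_{\underline{\omega}}}f\,dS_{\underline{\omega}}$, then differentiation under the integral via Lemma~\ref{lemma xpiT}, then Theorem~\ref{Theta T=2f} together with $|\mathbb{S}^+|=\sigma_{q-1}/2$. Your domination argument is fine, since all slices are congruent and $\boldsymbol{x}_{\pm\underline{\omega}}$ stay a fixed distance from $\partial\Omega_{\underline{\omega}}$.

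The problem is the constant you postponed checking: the factor $2$ you import from Theorem~\ref{Theta T=2f} is wrong. By construction of $\mathcal{E}_{\boldsymbol{y}}$, $T_{\Omega_{\underline{\omega}}}f$ is the slice extension (Theorem~\ref{*Representation Formula*}) of the classical Teodorescu transform $g(\boldsymbol{z})=\int_{\Omega_{\underline{\omega}}}E(\boldsymbol{z}-\boldsymbol{y})f(\boldsymbol{y})\,d\sigma_{\underline{\omega}}(\boldsymbol{y})$ on $\Omega_{\underline{\omega}}\subset\mathbb{R}^{p+2}$. The operator $\bar{\vartheta}$ maps $C^1$ generalized partial-slice functions to such functions and acts as $D_{\underline{\omega}}$ on $\Omega_{\underline{\omega}}$. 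Also $D_{\underline{\omega}}g=f$ there, because $E$ is the normalized fundamental solution. Hence $\bar{\vartheta}T_{\Omega_{\underline{\omega}}}f$ and $f$ are slice functions that agree on $\Omega_{\underline{\omega}}$, so $\bar{\vartheta}T_{\Omega_{\underline{\omega}}}f=f$, not $2f$.

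Theorem~\ref{CPF} confirms this. For smooth compactly supported $f\in\mathcal{GS}(\Omega_D)$ it gives $T_{\Omega_{\underline{\omega}}}(\bar{\vartheta}f)=f$. Applying the claimed identity to $\bar{\vartheta}f$ would then give $2\bar{\vartheta}f=\bar{\vartheta}f$, hence $\bar{\vartheta}f=0$ and $f=0$. In the proof of Theorem~\ref{Theta T=2f} the jump terms are miscounted; for example $\sum_{i=0}^{p}e_i(-e_i)=p-1$, not $1$. The correct jumps contribute $\frac{p+1}{p+2}f(\boldsymbol{x})$ through $D_{\boldsymbol{x}_p}$ and $\frac{1}{p+2}f(\boldsymbol{x})$ through the radial term, which sum to $f(\boldsymbol{x})$.

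So if you carry out the bookkeeping you announced, the argument gives $\bar{\vartheta}T_{\Omega_D}f=\frac{1}{\sigma_{q-1}}\cdot\frac{\sigma_{q-1}}{2}\cdot f=\frac12 f$ for the kernel $K_{\boldsymbol{y}}=\mathcal{E}_{\boldsymbol{y}}/(\sigma_{q-1}|\underline{\boldsymbol{y}}_q|^{q-1})$ as defined. Applying the same decomposition to Theorem~\ref{CPFG} with compactly supported $f$ gives $T_{\Omega_D}(\bar{\vartheta}f)=\frac12 f$. This shows it is the normalization of $K$ that is off: each full slice $\Omega_{\underline{\omega}}$ already covers both $\pm\underline{\omega}$, so one must divide by $|\mathbb{S}^+|$. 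That means taking $K_{\boldsymbol{y}}(\boldsymbol{x})=2\mathcal{E}_{\boldsymbol{y}}(\boldsymbol{x})/(\sigma_{q-1}|\underline{\boldsymbol{y}}_q|^{q-1})$.

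With that normalization and the corrected slice identity $\bar{\vartheta}T_{\Omega_{\underline{\omega}}}f=f$, your argument goes through unchanged and yields $f$. As written, both your proposal and the paper reach $f$ only because the spurious factor $2$ in Theorem~\ref{Theta T=2f} offsets the factor $\frac12$ coming from normalizing $K$ by $\sigma_{q-1}$ instead of $|\mathbb{S}^+|$.
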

\begin{proof}
	Let $\boldsymbol{y} = \boldsymbol{y}_p + \boldsymbol{y}_q \in \Omega_D$. We rewrite $\boldsymbol{y}_q = r\underline{\omega}$ with $\underline{\omega} \in \mathbb{S}$ using spherical coordinates. Then, the volume element can be expressed as $d\sigma (\boldsymbol{y}) = r^{q-1} \, d\sigma _{\underline{\omega}}(\boldsymbol{y}) \, dS_{\underline{\omega}} \left( \boldsymbol{y}\right)$. Hence, we have
	\begin{align*}
		T_{\Omega _D}f\left( \boldsymbol{x} \right) &=-\int_{\Omega _D}{K_{\boldsymbol{y}}\left( \boldsymbol{x} \right) f\left( \boldsymbol{y} \right) d\sigma \left( \boldsymbol{y} \right)}\\
		&=-\int_{\mathbb{S} ^+}{\int_{\Omega _{\underline{\omega }}}{\frac{\mathcal{E} _{\boldsymbol{y}}\left( \boldsymbol{x} \right)}{\sigma _{q-1}|\underline{\boldsymbol{y}}_q|^{q-1}}\cdot f\left( \boldsymbol{y} \right) r^{q-1}d\sigma _{\underline{\omega }}\left( \boldsymbol{y} \right)}dS_{\underline{\omega}} \left( \boldsymbol{y}\right)}\\
		&=-\frac{1}{\sigma _{q-1}}\int_{\mathbb{S} ^+}{\int_{\Omega _{\underline{\omega }}}{\mathcal{E} _{\boldsymbol{y}}\left( \boldsymbol{x} \right) f\left( \boldsymbol{y} \right) d\sigma _{\underline{\omega }}\left( \boldsymbol{y} \right)}dS_{\underline{\omega}} \left( \boldsymbol{y}\right)}\\
		&=\frac{1}{\sigma _{q-1}}\int_{\mathbb{S} ^+}{T_{\Omega _{\underline{\omega }}}f\left( \boldsymbol{x} \right) dS_{\underline{\omega}} \left( \boldsymbol{y}\right)}.
	\end{align*}
	By Lemma \ref{lemma xpiT} and Theorem \ref{Theta T=2f}, we have
	\begin{align*}
		\bar{\vartheta}_{\boldsymbol{x}}T_{\Omega _D}f\left( \boldsymbol{x} \right) &=\frac{1}{\sigma _{q-1}}\int_{\mathbb{S} ^+}{\bar{\vartheta}_{\boldsymbol{x}}T_{\Omega _{\underline{\omega }}}f\left( \boldsymbol{x} \right) dS_{\underline{\omega}} \left( \boldsymbol{y}\right)}\\
		&=\frac{1}{\sigma _{q-1}}\int_{\mathbb{S} ^+}{2f\left( \boldsymbol{x} \right) dS_{\underline{\omega}} \left( \boldsymbol{y}\right)}=f\left( \boldsymbol{x} \right).
	\end{align*}
\end{proof}
\par
Given the proof of Lemma \ref{lemma xpiT}, for $f\in C^1\left( \overline{\Omega _D} \right) \cap \mathcal{GS} \left( \Omega _D \right)$, we immediately conclude that
\begin{align*}
	& \partial _{x_{p_i}}T_{\Omega _D}f\left( \boldsymbol{x} \right) \nonumber\\
	=& \int_{\mathbb{S} ^{^+}}{\bigg[ \int_{\Omega _{\underline{\omega }}}{\frac{-p}{\sigma _{p+1}}\left( \alpha \frac{x_{p_i}-y_{p_i}}{|\boldsymbol{y}-\boldsymbol{x}_{\underline{\omega }}|^{p+2}}+\beta \frac{x_{p_i}-y_{p_i}}{|\boldsymbol{y}-\boldsymbol{x}_{-\underline{\omega }}|^{p+2}} \right) f\left( \boldsymbol{y} \right) d\sigma _{\underline{\omega }}\left( \boldsymbol{y} \right)}}\\
	&+e_if\left( \boldsymbol{x} \right) \bigg]dS_{\underline{\omega}} \left( \boldsymbol{y}\right)\\
	=& \int_{\Omega _D}\frac{-p}{\sigma _{p+1}}\left( \alpha \frac{x_{p_i}-y_{p_i}}{\left| \boldsymbol{y}-\boldsymbol{x}_{\underline{\omega }} \right|^{p+2}}+\beta \frac{x_{p_i}-y_{p_i}}{\left| \boldsymbol{y}-\boldsymbol{x}_{-\underline{\omega }} \right|^{p+2}} \right) f\left( \boldsymbol{y} \right) \cdot \left| \underline{\boldsymbol{y}}_q \right|^{1-q}d\sigma \left( \boldsymbol{y} \right)\nonumber\\
	&+\frac{\sigma _{q-1}}{2}\cdot e_if\left( \boldsymbol{x} \right),\\
	&\partial _{x_{q_i}}T_{\Omega _D}f\left( \boldsymbol{x} \right)\\
	=&\frac{x_{q_i}}{\left| \underline{\boldsymbol{x}}_q \right|}\int_{\mathbb{S} ^+} \bigg[ {\int_{\Omega _{\underline{\omega }}}{\frac{-p}{\sigma _{p+1}}\left( \alpha \frac{r-\tilde{r}}{\left| \boldsymbol{y}-\boldsymbol{x}_{\underline{\omega }} \right|^{p+2}}+\beta \frac{r+\tilde{r}}{\left| \boldsymbol{y}-\boldsymbol{x}_{-\underline{\omega }} \right|^{p+2}} \right) f\left( \boldsymbol{y} \right) d\sigma _{\underline{\omega }}\left( \boldsymbol{y} \right)}}\\
	&-\left( \alpha \underline{\omega}f\left( \boldsymbol{x}_{\underline{\omega}} \right) -\beta \underline{\omega}f\left( \boldsymbol{x}_{-\underline{\omega}} \right) \right) \bigg] dS_{\underline{\omega}} \left( \boldsymbol{y}\right)\\
	=&\frac{x_{q_i}}{\left| \underline{\boldsymbol{x}}_q \right|}\int_{\Omega _D}{\frac{-p}{\sigma _{p+1}}\left( \alpha \frac{r-\tilde{r}}{\left| \boldsymbol{y}-\boldsymbol{x}_{\underline{\omega }} \right|^{p+2}}+\beta \frac{r+\tilde{r}}{\left| \boldsymbol{y}-\boldsymbol{x}_{-\underline{\omega }} \right|^{p+2}} \right) f\left( \boldsymbol{y} \right) \left| \underline{\boldsymbol{y}}_q \right|^{1-q}d\sigma \left( \boldsymbol{y} \right)}\\
	&-\frac{\sigma _{q-1}x_{q_i}\underline{\boldsymbol{x}}_q}{2\left| \underline{\boldsymbol{x}}_q \right|^2}f\left( \boldsymbol{x} \right).
\end{align*}
We define the $L^t$ space over an $p$-symmetric domain $\Omega _D$ for slice functions as 
\begin{align*}
	\mathcal{L} ^t\left( \Omega _D \right) =\mathcal{GS} \left( \Omega _D \right) \cap L^t\left( \Omega _D \right).
\end{align*}
\par

\section{Hodge decomposition on a Banach space}
Let $\Omega _D\subset \mathbb{R} _{*}^{p+q+1}$ be a bounded p-symmetric domain and $1<t<\infty$, the norm of a $\mathbb{R} _{p+q}-valued$ function $f\in L^t\left( \Omega _D \right)$ is given by 
\begin{align*}
	\left\| f \right\| _{L^t\left( \Omega _D \right)} := \left( \int_{\Omega _D}{\left| f\left( \boldsymbol{x} \right) \right|^td\sigma \left( \boldsymbol{x} \right)} \right) ^{\frac{1}{t}}.
\end{align*}
In addition, we define
\begin{align*}
	\mathcal{A} ^t\left( \Omega _D \right) :=ker\bar{\vartheta}\cap \mathcal{L} ^t\left( \Omega _D \right).
\end{align*}
\begin{proposition} \label{lambda K} \cite{Co7}
	Let $\Omega _D\subset \mathbb{R} _{*}^{p+q+1}$ be a bounded p-symmetric domain. For any compact set $K\subset \Omega _D$, there exists a constant $\lambda _K>0$ such that 
	\begin{align*}
		sup\left\{ \left| f\left( \boldsymbol{x} \right) \right|\,\,: \boldsymbol{x}\in K \right\} \leqslant \lambda _K\left\| f \right\| _{L^t}, \forall f\in \mathcal{A} ^t\left( \Omega _D \right) .    
	\end{align*}
\end{proposition}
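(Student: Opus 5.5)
We derive the pointwise bound from the Cauchy integral formula \eqref{CIF}, applied on small $p$-symmetric subdomains and averaged over a range of radii. Since $K\subset\Omega_D$ is compact and $\Omega_D\subset\mathbb{R}_*^{p+q+1}$, the distance $\delta:=\operatorname{dist}(K,\partial\Omega_D\cup\mathbb{R}^{p+1})$ is positive. For $\boldsymbol{x}=\boldsymbol{x}_p+r\underline{\eta}\in K$ and $0<\rho<\delta/2$, we take the $(p+2)$-dimensional ball $B'_\rho$ centred at $(\boldsymbol{x}_p,r)$. It stays away from the axis $r=0$, so its $p$-symmetric completion $U^{\rho}:=\Omega_{B'_\rho\cup\overline{B'_\rho}}$ is a bounded $p$-symmetric domain with smooth boundary, contained in $\Omega_D$ and in $\mathbb{R}_*^{p+q+1}$. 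Here $\overline{B'_\rho}$ denotes the reflected ball.

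\textbf{Step 1 (local Cauchy formula).} For $f\in\mathcal{A}^t(\Omega_D)$ we first need $f$ smooth enough to apply Theorem~\ref{CPFG} on $U^\rho$. On each slice, $f\in\ker\bar\vartheta$ means $D_{\underline{\eta}}f=0$, so $f$ is monogenic in $p+2$ variables there, hence real-analytic. Through the representation formula (Theorem~\ref{*Representation Formula*}), this regularity carries over to all of $\Omega_D$. Formula \eqref{CIF} on $U^\rho$ then gives
\begin{align*}
f(\boldsymbol{x})=\int_{\partial U^\rho}K_{\boldsymbol{y}}(\boldsymbol{x})\,n(\boldsymbol{y})\,f(\boldsymbol{y})\,dS(\boldsymbol{y}).
\end{align*}

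\textbf{Step 2 (average in $\rho$).} We integrate this identity over $\rho\in(\delta/4,\delta/2)$ and divide by $\delta/4$. By the coarea formula the boundary integrals combine into a volume integral over the annular region $A:=U^{\delta/2}\setminus U^{\delta/4}$:
\begin{align*}
f(\boldsymbol{x})=\frac{4}{\delta}\int_{A}K_{\boldsymbol{y}}(\boldsymbol{x})\,n(\boldsymbol{y})\,f(\boldsymbol{y})\,d\sigma(\boldsymbol{y}),
\end{align*}
where $n$ is now the radial unit field of the slice balls. Interpreting $d\sigma$ consistently with $K_{\boldsymbol{y}}=\mathcal{E}_{\boldsymbol{y}}/(\sigma_{q-1}|\underline{\boldsymbol{y}}_q|^{q-1})$ and the decomposition $d\sigma=r^{q-1}d\sigma_{\underline{\omega}}\,dS_{\underline{\omega}}$ used in Lemma~\ref{lemma xpiT} requires some care. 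The factor $|\underline{\boldsymbol{y}}_q|^{q-1}$ cancels, and we are left with $\mathcal{E}_{\boldsymbol{y}}(\boldsymbol{x})$ integrated against $f$ on $A\cap\Omega_{\underline{\omega}}$.

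\textbf{Step 3 (kernel bound and Hölder).} On $A$, the points $\boldsymbol{y}_{\underline{\omega}}$ satisfy $|\boldsymbol{y}-\boldsymbol{x}_{\pm\underline{\omega}}|\ge\delta/4$ in the slice, and the coefficients satisfy $|\alpha|,|\beta|\le 1$. Hence $|\mathcal{E}_{\boldsymbol{y}}(\boldsymbol{x})|\le C\delta^{-(p+1)}$ uniformly for $\boldsymbol{x}\in K$. Moreover $|\underline{\boldsymbol{y}}_q|\ge\delta$ on $A$, so the weight $|\underline{\boldsymbol{y}}_q|^{1-q}$ is bounded there. Applying Hölder's inequality with exponents $t$ and $t'$,
\begin{align*}
|f(\boldsymbol{x})|\le C\,\delta^{-1}\delta^{-(p+1)}\,|A|^{1/t'}\,\|f\|_{L^t(\Omega_D)}=:\lambda_K\|f\|_{L^t},
\end{align*}
and the constant is independent of $\boldsymbol{x}\in K$ and of $f$.

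\textbf{Main obstacle.} The delicate step is Step~1: justifying that an $L^t$ element of $\ker\bar\vartheta$ is regular enough for \eqref{CIF}, which is a Weyl-lemma-type statement. I would first try the following argument. If $\bar\vartheta f=0$ holds only weakly, mollifying $f$ in the slice variables along a fixed $\underline{\eta}$ preserves the equation, so the mollified function is $C^1$ and satisfies \eqref{CIF}. The averaged formula from Step~2 passes to the $L^t$ limit, which yields the estimate, and it also shows that $f$ agrees almost everywhere with a continuous function.
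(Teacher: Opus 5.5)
The paper gives no proof of this proposition; it is simply quoted from \cite{Co7}, which treats slice regular functions rather than the $(p,q)$ setting with Lebesgue measure $d\sigma$ on $\mathbb{R}^{p+q+1}$. Your argument is correct and supplies that adaptation. The key points all check out:
\begin{itemize}
\item Since $r\ge\delta>\rho$, the tube $U^\rho$ over the slice ball stays off $\mathbb{R}^{p+1}$.
\item The function $\rho(\boldsymbol{y})=|(\boldsymbol{y}_p,|\underline{\boldsymbol{y}}_q|)-(\boldsymbol{x}_p,r)|$ satisfies $|\nabla\rho|=1$, so the coarea step is exact and $n=\nabla\rho$ is a unit paravector.
\item On each slice, the two annuli give $|\boldsymbol{y}-\boldsymbol{x}_{\pm\underline{\omega}}|\ge\delta/4$. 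Also $|\alpha|,|\beta|\le1$, because $|1\mp\underline{\omega}\,\underline{\eta}|^2=2\pm2\langle\underline{\omega},\underline{\eta}\rangle$.
\end{itemize}

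Some small corrections:
\begin{itemize}
\item On $A$ you only have $|\underline{\boldsymbol{y}}_q|\ge r-\delta/2\ge\delta/2$, not $\ge\delta$.
\item Uniformity in $\boldsymbol{x}\in K$ needs the bound $|A|\le|\Omega_D|$.
\item The regularity ``obstacle'' is moot. In the paper $\bar\vartheta$ is defined on $C^1$ functions, so elements of $\mathcal{A}^t(\Omega_D)$ are $C^1$ and each slice restriction is monogenic. If you do want weak solutions, mollify the stem function in $(\boldsymbol{x}_p,r)$ with a kernel even in $r$. This preserves both the even--odd conditions and the constant-coefficient Cauchy--Riemann system; ``mollifying along a fixed $\underline{\eta}$'' is not quite the right description.
\end{itemize}

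Your argument uses \eqref{CIF} only up to a fixed multiplicative constant, which is fortunate. With $\sigma_{q-1}$ the full sphere area and the $\mathbb{S}^+$-decomposition of Lemma \ref{lemma xpiT}, the left-hand side of \eqref{CIF} actually comes out as $\frac12 f(\boldsymbol{x})$. For example, when $q=1$ one has $K_{\boldsymbol{y}}=\frac12E_{\boldsymbol{y}}$. Your estimate is unaffected.

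A leaner variant bypasses the global formula and the coarea step:
\begin{enumerate}
\item Write $f(\boldsymbol{x})=\alpha f(\boldsymbol{x}_{\underline{\omega}})+\beta f(\boldsymbol{x}_{-\underline{\omega}})$.
\item Apply the mean-value property of the harmonic real components of the monogenic restriction $f|_{\Omega_{\underline{\omega}}}$ on the balls $B(\boldsymbol{x}_{\pm\underline{\omega}},\delta/2)$.
\item Integrate over $\underline{\omega}\in\mathbb{S}$ using $d\sigma=r^{q-1}d\sigma_{\underline{\omega}}\,dS_{\underline{\omega}}$, with $r^{q-1}$ bounded below on the tube, and finish with H\"older.
\end{enumerate}
This is the same estimate with slightly less machinery. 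Your route has the merit of staying entirely within the integral formulas the paper develops.
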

\begin{proposition}
	Let $\Omega _D\subset \mathbb{R} _{*}^{p+q+1}$ be a bounded p-symmetric domain, then
	\begin{enumerate}
		\item $\mathcal{L} ^t\left( \Omega _D \right)$ is a closed subspace of $L^t\left( \Omega _D \right)$,
		\item $\mathcal{A} ^t\left( \Omega _D \right)$ is a closed subspace of $\mathcal{L} ^t\left( \Omega _D \right)$.
	\end{enumerate}
\end{proposition}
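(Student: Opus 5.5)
The plan is to use sequential closedness. Take a sequence in the subspace converging in $L^t(\Omega_D)$ and show that the limit stays in the subspace. Both parts rest on the same device: from $L^t$ convergence, extract a subsequence converging pointwise almost everywhere, or locally uniformly on compact sets for part (2). Then check that the defining structural property passes to the limit.

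For (1), let $f_n\in\mathcal{L}^t(\Omega_D)$ with $f_n\to f$ in $L^t(\Omega_D)$. Each $f_n$ is induced by a stem function $F^{(n)}=F^{(n)}_1+iF^{(n)}_2$ on $D$. By the representation formula (Theorem \ref{*Representation Formula*}) with $\underline{\omega}_1=-\underline{\omega}_2=\underline{\eta}$, the stem components are recovered from $f_n$ as
\[
F^{(n)}_1(\boldsymbol{x}_p,r)=\tfrac12\bigl(f_n(\boldsymbol{x}_p+r\underline{\eta})+f_n(\boldsymbol{x}_p-r\underline{\eta})\bigr),\quad
F^{(n)}_2(\boldsymbol{x}_p,r)=-\tfrac12\underline{\eta}\bigl(f_n(\boldsymbol{x}_p+r\underline{\eta})-f_n(\boldsymbol{x}_p-r\underline{\eta})\bigr).
\]
Because this must be independent of $\underline{\eta}$, the membership $f\in\mathcal{GS}(\Omega_D)$ is equivalent, almost everywhere, to the linear identity
\[
f(\boldsymbol{x}_p+r\underline{\omega})=\tfrac12(1-\underline{\omega}\underline{\eta})f(\boldsymbol{x}_p+r\underline{\eta})+\tfrac12(1+\underline{\omega}\underline{\eta})f(\boldsymbol{x}_p-r\underline{\eta})
\]
holding for a.e. $(\boldsymbol{x}_p,r,\underline{\omega},\underline{\eta})$.

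I would pass to a subsequence $f_{n_k}\to f$ almost everywhere in $\Omega_D$. Writing $d\sigma=r^{q-1}d\sigma_{\underline{\omega}}dS_{\underline{\omega}}$, Fubini shows that the set where convergence fails is null in the product coordinates $(\boldsymbol{x}_p,r,\underline{\omega})$. The identity above holds for every $f_{n_k}$, so letting $k\to\infty$ shows it holds for $f$ off a null set. Defining $F_1,F_2$ by the displayed formulas for a fixed good $\underline{\eta}$ gives a stem function satisfying (\ref{Stem Function Costituent}) with $f=\mathcal{I}(F)$ a.e. Hence $f\in\mathcal{L}^t(\Omega_D)$, once we identify $L^t$ classes.

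The main obstacle is the measure-theoretic bookkeeping: the defining identity involves evaluations on different slices, so one must ensure there is a single $\underline{\eta}$ for which $r\underline{\eta}$ and $-r\underline{\eta}$ are good points for a.e. $(\boldsymbol{x}_p,r)$. I would handle this with Fubini over $\mathbb{S}\times\mathbb{S}$, choosing $\underline{\eta}$ outside a null set of the sphere.

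For (2), let $f_n\in\mathcal{A}^t(\Omega_D)$ with $f_n\to f$ in $L^t$. By part (1), $f\in\mathcal{L}^t(\Omega_D)$. Proposition \ref{lambda K} applied to $f_n-f_m\in\mathcal{A}^t(\Omega_D)$ gives
\[
\sup_K|f_n-f_m|\le\lambda_K\|f_n-f_m\|_{L^t},
\]
so $(f_n)$ is uniformly Cauchy on every compact $K\subset\Omega_D$. It therefore converges locally uniformly to a continuous representative of $f$.

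It remains to show that $\bar\vartheta f=0$. The slice kernel of $\bar\vartheta$ coincides with generalized partial-slice monogenicity, that is, on each slice $\Omega_{\underline{\omega}}$ the function satisfies $D_{\underline{\omega}}f=0$, so $f_n|_{\Omega_{\underline{\omega}}}$ are monogenic in $\mathbb{R}^{p+2}$. I would use the Cauchy integral formula on small balls in the slice (Theorem \ref{CPF} with vanishing volume term): $f_n(\boldsymbol{x})=\int_{\partial B}\mathcal{E}_{\boldsymbol{y}}(\boldsymbol{x})n(\boldsymbol{y})f_n(\boldsymbol{y})\,dS_{\underline{\eta}}(\boldsymbol{y})$. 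Uniform convergence on $\partial B$ lets us pass to the limit, so $f$ satisfies the same Cauchy representation. Differentiating under the integral (Theorem \ref{interchange}) then shows that $f$ is $C^1$ and that $D_{\underline{\eta}}f=0$ on each slice. Hence $\bar\vartheta f=0$ and $f\in\mathcal{A}^t(\Omega_D)$.
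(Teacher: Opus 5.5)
Your proposal is correct and follows essentially the same route as the paper. In part (1) the paper also shows that the representation-formula identity survives the $L^t$ limit, but it uses a triangle-inequality estimate in $L^t$ via spherical coordinates, and in part (2) it likewise uses Proposition \ref{lambda K} to get a locally uniform limit that agrees with $f$ a.e.; your a.e.\ subsequence/Fubini argument handles the choice of good directions more carefully and builds the stem function explicitly, and your slice-wise Cauchy integral argument supplies the step, only asserted in the paper, that this limit lies in $\ker\bar{\vartheta}$.
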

\begin{proof}
	\begin{enumerate}
		\item We suppose that there is a sequence  $f_n\in \mathcal{L} ^t\left( \Omega _D \right)$, which converges to $f \in L^t\left( \Omega _D \right)$ in the $L^t$ norm. If we want to prove $f\in \mathcal{GS} \left( \Omega _D \right)$, we just need to prove that $f$ satisfies the representation formula. For any $\underline{\omega}\in \mathbb{S}$, we get
		\begin{align}\label{fn representation formula}
			&\left\| f\left( \boldsymbol{x} \right) -\left( \alpha f\left( \boldsymbol{x}_{\underline{\omega}} \right) +\beta f\left( \boldsymbol{x}_{-\underline{\omega}} \right) \right) \right\| _{L^t}\nonumber\\
			\leqslant& \left\| f\left( \boldsymbol{x} \right) -f_n\left( \boldsymbol{x} \right) \right\| _{L^t}+\left\| f_n\left( \boldsymbol{x} \right) -\left( \alpha f_n\left( \boldsymbol{x}_{\underline{\omega}} \right) +\beta f_n\left( \boldsymbol{x}_{-\underline{\omega}} \right) \right) \right\| _{L^t}\nonumber\\
			&+\left\| \left( \alpha f_n\left( \boldsymbol{x}_{\underline{\omega}} \right) +\beta f_n\left( \boldsymbol{x}_{-\underline{\omega}} \right) \right) -\left( \alpha f\left( \boldsymbol{x}_{\underline{\omega}} \right) +\beta f\left( \boldsymbol{x}_{-\underline{\omega}} \right) \right) \right\| _{L^t}.
		\end{align}
		According to the assumption for $\left\{ f_n \right\}$, we know that 
		\begin{align*}
			\left\| f\left( \boldsymbol{x} \right) -f_n\left( \boldsymbol{x} \right) \right\| _{L^t}\longrightarrow 0 , \quad \quad \left( when\,\,n\rightarrow \infty \right) .
		\end{align*}
		Since $f_n$ is a generalized partial-slice function for all $n$, then we have $$f_n\left( \boldsymbol{x} \right) =\left( \alpha f_n\left( \boldsymbol{x}_{\underline{\omega }} \right) +\beta f_n\left( \boldsymbol{x}_{-\underline{\omega }} \right) \right).$$ Now, let's think about another formula as follows.
		\begin{align*}
			&\quad \left\| \left( \alpha f_n\left( \boldsymbol{x}_{\underline{\omega}} \right) +\beta f_n\left( \boldsymbol{x}_{-\underline{\omega}} \right) \right) -\left( \alpha f\left( \boldsymbol{x}_{\underline{\omega}} \right) +\beta f\left( \boldsymbol{x}_{-\underline{\omega}} \right) \right) \right\| _{L^t}\\
			&\leqslant \left\| \alpha \left( f_n\left( \boldsymbol{x}_{\underline{\omega }} \right) -f\left( \boldsymbol{x}_{\underline{\omega }} \right) \right) \right\| _{L^t}+\left\| \beta \left( f_n\left( \boldsymbol{x}_{-\underline{\omega }} \right) -f\left( \boldsymbol{x}_{-\underline{\omega }} \right) \right) \right\| _{L^t}.  
		\end{align*}
		Now, we assume $\boldsymbol{x}=\boldsymbol{x}_p+\underline{\eta}r$, since the sequence $\left\{ f_n \right\}$ converged to $f$ in the $L^t$ norm, which can be written as 
		\begin{eqnarray}\label{norm of (fn-f)}
			\begin{split}
				\left\| f_n-f \right\| _{L^t}^{t}&=\int_{\Omega _D}{\left| f_n\left( \boldsymbol{x} \right) -f\left( \boldsymbol{x} \right) \right|^td\sigma \left( \boldsymbol{x} \right)}\\
				&=\int_{\mathbb{S} ^+}{\int_{\Omega _{\underline{\eta}}}{\left| \left( f_n-f \right) \left( \boldsymbol{x}_p+\underline{\eta}r \right) \right|}^t\cdot r^{q-1}d\sigma _{\underline{\eta}}\left( \boldsymbol{x} \right) dS_{\underline{\eta}} \left( \boldsymbol{x}\right)}.
			\end{split}
		\end{eqnarray}
		Through our previous assumptions, equation $(\ref{norm of (fn-f)})$ converges to zero when $n$ goes to infinity. This implies that for almost every $\underline{\eta}\in \mathbb{S} ^+$, we get 
		\begin{align*}
			\int_{\Omega _{\underline{\eta}}}{\left| \left( f_n-f \right) \left( \boldsymbol{x} \right) \right|^t\cdot r^{q-1}d\sigma _{\underline{\eta}}\left( \boldsymbol{x} \right)}\longrightarrow 0,\quad \quad \left( when\,\,n\rightarrow \infty \right).
		\end{align*}
		Hence, for the constant $C$, $C^{\prime}$, we have 
		\begin{align*}
			&\left\| \alpha \left( f_n\left( \boldsymbol{x}_{\underline{\omega}} \right) -f_n\left( \boldsymbol{x}_{\underline{\omega}} \right) \right) \right\| _{L^t}^{t}\\
			= &\int_{\Omega _D}{\left| \alpha \left( f_n\left( \boldsymbol{x}_{\underline{\omega }} \right) -f_n\left( \boldsymbol{x}_{\underline{\omega }} \right) \right) \right|^td\sigma \left( \boldsymbol{x} \right)}\\
			\leqslant& C\int_{\mathbb{S} ^+}{\int_{\Omega _{\underline{\eta}}}{\left| \left( f_n-f \right) \left( \boldsymbol{x}_p+\underline{\omega}r \right) \right|^t\cdot r^{q-1}d\sigma _{\underline{\eta}}\left( \boldsymbol{x} \right)}dS_{\underline{\eta}} \left( \boldsymbol{x}\right)}\\
			=&C\int_{\mathbb{S} ^+}{\int_{\Omega _{\underline{\omega }}}{\left| \left( f_n-f \right) \left( \boldsymbol{x}_p+\underline{\omega }r \right) \right|^t\cdot r^{q-1}d\sigma _{\underline{\omega }}\left( \boldsymbol{x} \right)}dS_{\underline{\eta}} \left( \boldsymbol{x}\right)}\\
			\leqslant&C^{\prime}\int_{\Omega _{\underline{\eta}}}{\left| \left( f_n-f \right) \left( \boldsymbol{x}_p+\underline{\omega}r \right) \right|^t\cdot r^{q-1}d\sigma _{\underline{\eta}}\left( \boldsymbol{x} \right)}\longrightarrow 0,\quad \left( n\rightarrow \infty \right),
		\end{align*}
		where the last second equality comes from the fact that the domains of the variables $\boldsymbol{x}_p$, $r$ on $\Omega _{\underline{\omega}}$ and $\Omega _{\underline{\eta}}$ are the same. In a similar way, we come to similar conclusions
		\begin{align*}
			\left\| \beta \left( f_n\left( \boldsymbol{x}_{-\underline{\omega}} \right) -f\left( \boldsymbol{x}_{-\underline{\omega}} \right) \right) \right\| _{L^t}\longrightarrow 0,\quad \quad \left( n\rightarrow \infty \right) .
		\end{align*}
		Finally, according to equation $(\ref{fn representation formula})$, we have $$\left\| f\left( \boldsymbol{x} \right) -\left( \alpha f\left( \boldsymbol{x}_{\underline{\omega }} \right) +\beta f\left( \boldsymbol{x}_{-\underline{\omega }} \right) \right) \right\| _{L^t}\longrightarrow 0,$$ when $n\rightarrow \infty$, which completes the proof.
		\item Suppose $\left\{ f_n \right\}$ is a convergent sequence in $\mathcal{A} ^t\left( \Omega _D \right)$ converging to $f$, where $f$ is its limit function in $\mathcal{L} ^t\left( \Omega _D \right)$. By proposition $\ref{lambda K}$, we know that there exists a function $g\,:\, \Omega _D\longrightarrow \mathbb{R} _{p+q}$ given by
		\begin{align*}
			g\left( \boldsymbol{x} \right) \,\,:=\,\,\underset{n\rightarrow \infty}{\lim}f_n\left( \boldsymbol{x} \right) ,\quad for\,\,all\,\,\boldsymbol{x}\in \Omega _D,
		\end{align*}
		and $\left\{ f_n \right\}$ converges uniformly to $g$ on compact subsets of $\Omega _D$, which implies that $g$ is a generalized partial-slice monogenic function on $\Omega _D$. Now for any compact subset $K\subset \Omega _D$, we have
		\begin{align*}
			0&\leqslant \int_K{\left| \left( f-g \right) \left( \boldsymbol{x} \right) \right|^td\sigma \left( \boldsymbol{x} \right)}\\
			&\leqslant \int_K{\left| \left( f-f_n \right) \left( \boldsymbol{x} \right) \right|^td\sigma \left( \boldsymbol{x} \right)}+\int_K{\left| \left( f_n-g \right) \left( \boldsymbol{x} \right) \right|^td\sigma \left( \boldsymbol{x} \right)}\\
			&\leqslant \left\| f-f_n \right\| _{L^t}^{t}+\int_K{\left| \left( f_n-g \right) \left( \boldsymbol{x} \right) \right|^td\sigma \left( \boldsymbol{x} \right)}.
		\end{align*}
		Based on our assumption for $\left\{ f_n \right\}$, we know that $\left\| f-f_n \right\| _{L^t}^{t}$ converges to zero when $n\rightarrow \infty $. Hence, we have 
		\begin{align*}
			\int_K{\left| \left( f-g \right) \left( \boldsymbol{x} \right) \right|^td\sigma \left( \boldsymbol{x} \right)}\longrightarrow 0,\quad \left( when\,\,n\rightarrow \infty \right) .
		\end{align*}
		This shows that $f=g\in \mathcal{A} ^t\left( \Omega _D \right)$, which completes the proof.  
	\end{enumerate}
\end{proof}
\begin{theorem}[Plemelj integral formula]\label{Plemelj integral formula}
	Let $\Omega _D\subset \mathbb{R} _{*}^{p+q+1}$ be defined as above with smooth boundary $\partial \Omega _D$, and $x\left( t \right) \in \Omega _D$ is a smooth path in $\mathbb{R} ^{p+q+1}$ and it has non-tangential limit $\boldsymbol{x}\in \partial \Omega _D$ as $t\rightarrow 0$. Then, for each $H\ddot{o}lder$ continuous slice function $f\,:\, \Omega _D\longrightarrow \mathbb{R} _{p+q}$ define on $\Omega _D$, we have
	\begin{align*}
		&\underset{t\rightarrow 0}{\lim}\int_{\partial \Omega _D}{K_{\boldsymbol{y}}\left( x\left( t \right) \right) n\left( \boldsymbol{y} \right) f\left( \boldsymbol{y} \right) d\sigma \left( \boldsymbol{y} \right)}\\
		=&
		\begin{cases}
			\frac{f\left( \boldsymbol{x} \right)}{2}+p.v.\int_{\partial \Omega _D}{K_{\boldsymbol{y}}\left( \boldsymbol{x} \right) n\left( \boldsymbol{y} \right) f\left( \boldsymbol{y} \right) d\sigma \left( \boldsymbol{y} \right)},&x\left( t \right) \in \Omega _D;\\
			-\frac{f\left( \boldsymbol{x} \right)}{2}+p.v.\int_{\partial \Omega _D}{K_{\boldsymbol{y}}\left( \boldsymbol{x} \right) n\left( \boldsymbol{y} \right) f\left( \boldsymbol{y} \right) d\sigma \left( \boldsymbol{y} \right)},&x\left( t \right) \in \mathbb{R} _{*}^{p+q+1}\backslash \Omega _D,
		\end{cases}
	\end{align*}
	where $p.v.$ stands for the principal value.
\end{theorem}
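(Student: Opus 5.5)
The plan is to reduce the statement to the classical Plemelj--Sokhotski formula for the monogenic Cauchy kernel $E$ in $\mathbb{R}^{p+2}$. We do this slice by slice, exactly as the Cauchy--Pompeiu formula on $\Omega_D$ (Theorem \ref{CPFG}) was obtained from the slice version (Theorem \ref{CPF}).

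First we write $\boldsymbol{y}=\boldsymbol{y}_p+\tilde r\underline{\omega}$ and use the factorization $dS(\boldsymbol{y})=\tilde r^{\,q-1}dS_{\underline{\omega}}(\boldsymbol{y})\,dS(\underline{\omega})$ on $\partial\Omega_D$, as in the proof of Lemma \ref{lemma xpiT}. The factor $|\underline{\boldsymbol{y}}_q|^{1-q}$ in $K_{\boldsymbol{y}}$ then cancels, and
\begin{align*}
\int_{\partial \Omega _D}K_{\boldsymbol{y}}(x(t))n(\boldsymbol{y})f(\boldsymbol{y})\,dS(\boldsymbol{y})=\frac{1}{\sigma_{q-1}}\int_{\mathbb{S}^+}\int_{\partial\Omega_{\underline{\omega}}}\mathcal{E}_{\boldsymbol{y}}(x(t))n(\boldsymbol{y})f(\boldsymbol{y})\,dS_{\underline{\omega}}(\boldsymbol{y})\,dS(\underline{\omega}).
\end{align*}
The same identity holds at the boundary point $\boldsymbol{x}$ in the principal value sense. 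Next we write $x(t)=x_p(t)+r(t)\underline{\eta}(t)$ and expand $\mathcal{E}_{\boldsymbol{y}}(x(t))=\alpha(t)E_{\boldsymbol{y}}(x(t)_{\underline{\omega}})+\beta(t)E_{\boldsymbol{y}}(x(t)_{-\underline{\omega}})$. Here $x(t)_{\pm\underline{\omega}}=x_p(t)\pm r(t)\underline{\omega}$ are smooth paths in the slice $\mathbb{R}^{p+1}\oplus\underline{\omega}\mathbb{R}\cong\mathbb{R}^{p+2}$. Because $\Omega_D$ is $p$-symmetric, these paths lie in $\Omega_{\underline{\omega}}$ (respectively in its complement) exactly when $x(t)$ lies in $\Omega_D$ (respectively outside it). They converge non-tangentially to $\boldsymbol{x}_{\pm\underline{\omega}}\in\partial\Omega_{\underline{\omega}}$: non-tangentiality transfers because the map $(\boldsymbol{x}_p,r)\mapsto \boldsymbol{x}_p+r\underline{\omega}$ is an isometry onto the slice and $\partial\Omega_D$ is the rotation of $\partial D$. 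Moreover, $\alpha(t)\to\alpha$ and $\beta(t)\to\beta$, since $\underline{\eta}(t)\to\underline{\eta}$ when $\boldsymbol{x}\in\mathbb{R}^{p+q+1}_*$.

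On each slice, $f|_{\Omega_{\underline{\omega}}}$ is Hölder continuous, and the classical Plemelj--Sokhotski theorem for $E$ in $\mathbb{R}^{p+2}$ gives
\begin{align*}
\lim_{t\to0}\int_{\partial\Omega_{\underline{\omega}}}E_{\boldsymbol{y}}(x(t)_{\pm\underline{\omega}})n f\,dS_{\underline{\omega}}=\pm'\tfrac12 f(\boldsymbol{x}_{\pm\underline{\omega}})+p.v.\int_{\partial\Omega_{\underline{\omega}}}E_{\boldsymbol{y}}(\boldsymbol{x}_{\pm\underline{\omega}})n f\,dS_{\underline{\omega}},
\end{align*}
with sign $+$ for interior approach and $-$ for exterior approach. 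Combining these with the weights $\alpha,\beta$, the jump terms become $\pm\frac12(\alpha f(\boldsymbol{x}_{\underline{\omega}})+\beta f(\boldsymbol{x}_{-\underline{\omega}}))$. By the representation formula (Theorem \ref{*Representation Formula*}), valid since $f$ is a slice function, this equals $\pm\frac12 f(\boldsymbol{x})$ and is independent of $\underline{\omega}$. Integrating over $\mathbb{S}^+$ and dividing by $\sigma_{q-1}$ reproduces the claimed jump $\pm f(\boldsymbol{x})/2$, with the normalization chosen consistently with the Cauchy integral formula (\ref{CIF}) (take $f\equiv$ const to calibrate). The remaining terms reassemble into the principal value integral over $\partial\Omega_D$.

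The main obstacle is exchanging $\lim_{t\to0}$ with the outer integral over $\mathbb{S}^+$. For this we need a bound on the slice Cauchy integrals that is uniform in $t$ and $\underline{\omega}$. We would first try the standard estimate behind the classical Plemelj formula: split $f(\boldsymbol{y})=(f(\boldsymbol{y})-f(\boldsymbol{x}_{\pm\underline{\omega}}))+f(\boldsymbol{x}_{\pm\underline{\omega}})$. The constant part is handled exactly by Cauchy's formula (giving a bounded indicator-type value). The Hölder part is dominated by $C|\boldsymbol{y}-\boldsymbol{x}_{\pm\underline{\omega}}|^{\mu-p-1}$, using non-tangential approach $|\boldsymbol{y}-x(t)_{\pm}|\gtrsim|\boldsymbol{y}-\boldsymbol{x}_{\pm}|$, and this is integrable on the $(p+1)$-dimensional boundary with constants uniform in $\underline{\omega}$ by rotational symmetry. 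Dominated convergence over the compact set $\mathbb{S}^+$ then finishes the argument. The same bound also shows that the principal value over $\partial\Omega_D$ exists and equals the $\underline{\omega}$-average of the slice principal values.
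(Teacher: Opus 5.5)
Your proposal is correct and follows the paper's route. Both arguments decompose the boundary integral over $\mathbb{S}^+$ into slice integrals, split $\mathcal{E}_{\boldsymbol{y}}$ into the two classical kernels evaluated at $\boldsymbol{x}_{\pm\underline{\omega}}(t)$, apply the classical Sokhotski--Plemelj formula on each slice, and recombine the jump terms through the representation formula. Your extra justifications fill in steps the paper leaves implicit: the uniform domination that lets the limit pass through $\int_{\mathbb{S}^+}$, the transfer of non-tangential approach, $\alpha(t)\to\alpha$, and calibrating the constant against (\ref{CIF}).
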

\begin{proof}
	Here, we only present the details for the case $x\left( t \right) \in \Omega _D$, for the other case, one can prove it similarly.\par
	Firstly, it's easy for us to know that 
	\begin{align}\label{Firstly}
		&\int_{\partial \Omega _D}{K_{\boldsymbol{y}}\left( \boldsymbol{x} \right) n\left( \boldsymbol{y} \right) f\left( \boldsymbol{y} \right) d\sigma \left( \boldsymbol{y} \right)}\nonumber\\
		=&\frac{1}{\sigma _{q-1}}\int_{\mathbb{S} ^+}{\int_{\partial \Omega _{\underline{\omega}}}{\mathcal{E} _{\boldsymbol{y}}\left( \boldsymbol{x} \right) n\left( \boldsymbol{y} \right) f\left( \boldsymbol{y} \right) d\sigma_{\underline{\omega}} \left( \boldsymbol{y}\right)}}dS_{\underline{\omega}} \left( \boldsymbol{y}\right) ,
	\end{align}
	where $\underline{\omega}\in \mathbb{S}$. We denote $\boldsymbol{x}\left( t \right) =\boldsymbol{u}\left( t \right) +\underline{\eta }v\left( t \right) $, where $\underline{\eta }\in \mathbb{S}$, $\boldsymbol{u}\left( t \right) \in \mathbb{R} ^{p+1}$, $v\left( t \right) \in \mathbb{R}$ and $\boldsymbol{x}_{\pm \underline{\omega }}\left( t \right) =\boldsymbol{u}\left( t \right) \pm \underline{\omega }v\left( t \right) $. Then, the representation formula givens us that
	\begin{align}\label{Cauchy kernel representation formula}
		&\mathcal{E} _{\boldsymbol{y}}\left( \boldsymbol{x}\left( t \right) \right) \nonumber\\
		&=\frac{1}{2}\left[ \left( \mathcal{E} _{\boldsymbol{y}}\left( \boldsymbol{x}_{\underline{\omega }}\left( t \right) \right) +\mathcal{E} _{\boldsymbol{y}}\left( \boldsymbol{x}_{-\underline{\omega }}\left( t \right) \right) \right) +\underline{\omega }\underline{\eta }\left( \mathcal{E} _{\boldsymbol{y}}\left( \boldsymbol{x}_{-\underline{\omega }}\left( t \right) \right) -\mathcal{E} _{\boldsymbol{y}}\left( \boldsymbol{x}_{\underline{\omega }}\left( t \right) \right) \right) \right] .
	\end{align}
	Hence, we get
	\begin{align}\label{integtal representation formula}
		&\int\limits_{\partial \Omega _{\underline{\omega }}}{\mathcal{E} _{\boldsymbol{y}}\left( \boldsymbol{x}\left( t \right) \right) n\left( \boldsymbol{y} \right) f\left( \boldsymbol{y} \right) d\sigma_{\underline{\omega}} \left( \boldsymbol{y}\right)}\nonumber\\
		=&\frac{1}{2}\bigg[ \int\limits_{\partial \Omega _{\underline{\omega }}}{\mathcal{E} _{\boldsymbol{y}}\left( \boldsymbol{x}_{\underline{\omega }}\left( t \right) \right) n\left( \boldsymbol{y} \right) f\left( \boldsymbol{y} \right) d\sigma_{\underline{\omega}} \left( \boldsymbol{y}\right)}\nonumber\\
		&+\int\limits_{\partial \Omega _{\underline{\omega }}}{\mathcal{E} _{\boldsymbol{y}}\left( \boldsymbol{x}_{-\underline{\omega }}\left( t \right) \right) n\left( \boldsymbol{y} \right) f\left( \boldsymbol{y} \right) d\sigma_{\underline{\omega}} \left( \boldsymbol{y}\right)}\bigg]\nonumber\\	
	   &+\frac{\underline{\omega}\underline{\eta}}{2}\bigg[ \int\limits_{\partial \Omega _{\underline{\omega }}}{\mathcal{E} _{\boldsymbol{y}}\left( \boldsymbol{x}_{-\underline{\omega }}\left( t \right) \right) n\left( \boldsymbol{y} \right) f\left( \boldsymbol{y} \right) d\sigma_{\underline{\omega}} \left( \boldsymbol{y}\right)}\nonumber\\
		&-\int\limits_{\partial \Omega _{\underline{\omega }}}{\mathcal{E} _{\boldsymbol{y}}\left( \boldsymbol{x}_{\underline{\omega }}\left( t \right) \right) n\left( \boldsymbol{y} \right) f\left( \boldsymbol{y} \right) d\sigma_{\underline{\omega}} \left( \boldsymbol{y}\right)} \bigg] .
	\end{align}
	Notice that $\boldsymbol{x}_{\pm \underline{\omega }}\left( t \right)$ approach $\boldsymbol{x}_{\pm \underline{\omega }}$ in $\Omega \cap \mathbb{R} ^{p+2}$ non-tangentially, when $\boldsymbol{x}(t)$ approaches $\boldsymbol{x}$ in $\Omega$ non-tangentially. Hence, with the Sokhotski-Plemelj formula, we have 
	\begin{align*}
		& \underset{t\rightarrow 0}{\lim}\int_{\partial \Omega _{\underline{\omega }}}{\mathcal{E} _{\boldsymbol{y}}\left( \boldsymbol{x}_{\underline{\omega }}\left( t \right) \right) n\left( \boldsymbol{y} \right) f\left( \boldsymbol{y} \right) d\sigma_{\underline{\omega}} \left( \boldsymbol{y}\right)}\\
		=&p.v.\int_{\partial \Omega _D}{K_{\boldsymbol{y}}\left( \boldsymbol{x}_{\underline{\omega}} \right) n\left( \boldsymbol{y} \right) f\left( \boldsymbol{y} \right) d\sigma \left( \boldsymbol{y} \right)}+\frac{f\left( \boldsymbol{x}_{\underline{\omega}} \right)}{2},\\
		& \underset{t\rightarrow 0}{\lim}\int_{\partial \Omega _{\underline{\omega }}}{\mathcal{E} _{\boldsymbol{y}}\left( \boldsymbol{x}_{-\underline{\omega }}\left( t \right) \right) n\left( \boldsymbol{y} \right) f\left( \boldsymbol{y} \right) d\sigma_{\underline{\omega}} \left( \boldsymbol{y}\right)}\\
		=&p.v.\int_{\partial \Omega _D}{K_{\boldsymbol{y}}\left( \boldsymbol{x}_{-\underline{\omega }} \right) n\left( \boldsymbol{y} \right) f\left( \boldsymbol{y} \right) d\sigma \left( \boldsymbol{y} \right)}+\frac{f\left( \boldsymbol{x}_{-\underline{\omega }} \right)}{2},
	\end{align*}
	Plugging the equations above into $(\ref{integtal representation formula})$, we get
	\begin{align*}
		&\int_{\partial \Omega _{\underline{\omega }}}{\mathcal{E} _{\boldsymbol{y}}\left( \boldsymbol{x}\left( t \right) \right) n\left( \boldsymbol{y} \right) f\left( \boldsymbol{y} \right) d\sigma_{\underline{\omega}} \left( \boldsymbol{y}\right)}\\
		=&p.v.\int_{\partial \Omega _D}{K_{\boldsymbol{y}}\left( \boldsymbol{x}_{\underline{\omega }} \right) n\left( \boldsymbol{y} \right) f\left( \boldsymbol{y} \right) d\sigma \left( \boldsymbol{y} \right)}\\
		&+\frac{1}{2}\left[ \left( \frac{f\left( \boldsymbol{x}_{\underline{\omega }} \right)}{2}+\frac{f\left( \boldsymbol{x}_{-\underline{\omega }} \right)}{2} \right) +\underline{\omega }\underline{\eta }\left( \frac{f\left( \boldsymbol{x}_{-\underline{\omega }} \right)}{2}-\frac{f\left( \boldsymbol{x}_{\underline{\omega }} \right)}{2} \right) \right] \\
		=&p.v.\int_{\partial \Omega _D}{K_{\boldsymbol{y}}\left( \boldsymbol{x} \right) n\left( \boldsymbol{y} \right) f\left( \boldsymbol{y} \right) d\sigma \left( \boldsymbol{y} \right)}+\frac{1}{2}f\left( \boldsymbol{x} \right) ,
	\end{align*}
	where the two equations above rely on $(\ref{Cauchy kernel representation formula})$ and the fact that
	\begin{align*}
		\frac{1}{2}\left[ \left( f\left( \boldsymbol{x}_{\underline{\omega }} \right) +f\left( \boldsymbol{x}_{-\underline{\omega }} \right) \right) +\underline{\omega }\underline{\eta }\left( f\left( \boldsymbol{x}_{-\underline{\omega }} \right) -f\left( \boldsymbol{x}_{\underline{\omega }} \right) \right) \right] =f\left( \boldsymbol{x} \right) ,
	\end{align*}
	which comes from Theorem $\ref{*Representation Formula*}$. Therefore, with $(\ref{Firstly})$, we finally have 
	\begin{align*}
		&\underset{t\rightarrow 0}{\lim}\int_{\partial \Omega _D}{K_{\boldsymbol{y}}\left( \boldsymbol{x}\left( t \right) \right) n\left( \boldsymbol{y} \right) f\left( \boldsymbol{y} \right) d\sigma \left( \boldsymbol{y} \right)}\\
		=&\frac{f\left( \boldsymbol{x} \right)}{2}+p.v.\int_{\partial \Omega _D}{K_{\boldsymbol{y}}\left( \boldsymbol{x} \right) n\left( \boldsymbol{y} \right) f\left( \boldsymbol{y} \right) d\sigma \left( \boldsymbol{y} \right)},
	\end{align*}
	which completes the proof.
\end{proof}
\begin{corollary}\label{corollary of  Plemelj formula}
	Let $\Omega _D\subset \mathbb{R} _{*}^{p+q+1}$ be a bounded p-symmetric domain with smooth boundary $\partial \Omega _D$. The relation 
	\begin{align*}
		p.v.\int_{\partial \Omega _D}{K_{\boldsymbol{y}}\left( \boldsymbol{x} \right) n\left( \boldsymbol{y} \right) g\left( \boldsymbol{y} \right) d\sigma \left( \boldsymbol{y} \right)}=\frac{g\left( \boldsymbol{x} \right)}{2},\quad for\,\,all\,\,x\in \partial \Omega _D
	\end{align*}
	is necessary and sufficient so that g represents the boundary values of a generalized partial-slice monogenic function \\
	\begin{align*}
		p.v.\int_{\partial \Omega _D}{K_{\boldsymbol{y}}\left( \boldsymbol{x} \right) n\left( \boldsymbol{y} \right) g\left( \boldsymbol{y} \right) d\sigma \left( \boldsymbol{y} \right)}
	\end{align*}
	defined in $\Omega _D$.
	On the other hand, the relation 
	\begin{align*}
		p.v.\int_{\partial \Omega _D}{K_{\boldsymbol{y}}\left( \boldsymbol{x} \right) n\left( \boldsymbol{y} \right) g\left( \boldsymbol{y} \right) d\sigma \left( \boldsymbol{y} \right)}=-\frac{g\left( \boldsymbol{x} \right)}{2},\quad for\,\,all\,\,x\in \partial \Omega _D
	\end{align*}
	is necessary and sufficient so that g represents the boundary values of a generalized partial-slice monogenic function \\
	\begin{align*}
		p.v.\int_{\partial \Omega _D}{K_{\boldsymbol{y}}\left( \boldsymbol{x} \right) n\left( \boldsymbol{y} \right) g\left( \boldsymbol{y} \right) d\sigma \left( \boldsymbol{y} \right)}
	\end{align*}
	defined in $\mathbb{R} _{*}^{p+q+1}\backslash \Omega _D$.
\end{corollary}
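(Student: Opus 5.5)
We derive the corollary from the Plemelj integral formula (Theorem \ref{Plemelj integral formula}) together with the Cauchy integral formula (\ref{CIF}). Throughout, $g$ is a H\"older continuous slice function on $\partial\Omega_D$. Write
\begin{align*}
	\Phi(\boldsymbol{z}) := \int_{\partial \Omega _D}{K_{\boldsymbol{y}}\left( \boldsymbol{z} \right) n\left( \boldsymbol{y} \right) g\left( \boldsymbol{y} \right) d\sigma \left( \boldsymbol{y} \right)}, \quad \boldsymbol{z}\in \mathbb{R}_*^{p+q+1}\backslash \partial\Omega_D,
\end{align*}
and let $S g(\boldsymbol{x})$ denote the principal value integral on $\partial\Omega_D$.

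First we check that $\Phi$ lies in $\ker\bar{\vartheta}$ both in $\Omega_D$ and in the exterior $\mathbb{R}_*^{p+q+1}\backslash\overline{\Omega_D}$. Slice by slice, $\mathcal{E}_{\boldsymbol{y}}(\cdot)$ is built via the representation formula from the monogenic Cauchy kernel $E_{\boldsymbol{y}}$. Away from $\boldsymbol{y}$ it is therefore a generalized partial-slice monogenic function of $\boldsymbol{z}$, and by the Proposition after Theorem \ref{CPF} it is annihilated by $\bar{\vartheta}$. Differentiation under the integral is justified by Theorem \ref{interchange}, because the kernel is smooth and uniformly bounded for $\boldsymbol{z}$ in a compact set away from $\partial\Omega_D$. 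Theorem \ref{Plemelj integral formula} then gives the non-tangential boundary values
\begin{align*}
	\Phi^{+}(\boldsymbol{x})=\tfrac12 g(\boldsymbol{x})+Sg(\boldsymbol{x}), \qquad \Phi^{-}(\boldsymbol{x})=-\tfrac12 g(\boldsymbol{x})+Sg(\boldsymbol{x}),\quad \boldsymbol{x}\in\partial\Omega_D,
\end{align*}
from inside and outside respectively.

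\emph{Interior case.} For sufficiency, assume $Sg=\frac12 g$. Then $\Phi^{+}=g$, so $g$ is the boundary value of the generalized partial-slice monogenic function $\Phi|_{\Omega_D}$. For necessity, assume $g=h^{+}$ for some $h\in\ker\bar\vartheta$ on $\Omega_D$, with $h$ continuous up to the boundary. The Cauchy integral formula (\ref{CIF}) gives $h=\Phi$ in $\Omega_D$, hence $g=\Phi^{+}=\frac12 g+Sg$, which is $Sg=\frac12 g$.

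\emph{Exterior case.} The argument is symmetric, using $\Phi^{-}$. Sufficiency follows as before: $Sg=-\frac12 g$ gives $\Phi^{-}=g$. For necessity we need the exterior Cauchy formula: if $h\in\ker\bar\vartheta$ on the exterior with boundary values $g$ and $h$ vanishes at infinity, then $\Phi=-h$ outside $\Omega_D$ and $\Phi=0$ inside. We prove this slicewise by Theorem \ref{CPFG} on $B_R\backslash\overline{\Omega_D}$, letting $R\to\infty$; the large-sphere term vanishes since $E_{\boldsymbol{y}}$ decays like $|\boldsymbol{z}|^{-p-1}$. Then $\Phi^{+}=0$, so $Sg=-\frac12 g$. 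With the sign conventions of Theorem \ref{Plemelj integral formula}, this yields exactly the stated relation, identifying $g$ with the exterior boundary value of $-\Phi$, i.e. of $\Phi$ up to the orientation convention.

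The main obstacle is the necessity direction in the exterior case. There the paper has no exterior Cauchy formula, and the precise normalization (decay at infinity, sign of $n$) must be fixed so that the sign in the relation comes out correctly. We handle it by working on a single slice $\Omega_{\underline\omega}$, where the classical monogenic exterior formula holds. We then transport the result to $\Omega_D$ by the same spherical integration over $\mathbb{S}^+$ used in (\ref{Firstly}), together with the representation formula (Theorem \ref{*Representation Formula*}).
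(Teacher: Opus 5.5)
Your interior argument is the paper's own: the Cauchy integral formula (\ref{CIF}) forces any continuation to equal $\Phi$, and Theorem \ref{Plemelj integral formula} turns $\Phi^{+}=g$ into $Sg=\tfrac12 g$ and back. The paper settles the exterior case with the single word ``similarly''. The extra argument you give there has two problems. The minor one is a sign slip: by your own displayed formula, $Sg=-\tfrac12 g$ gives $\Phi^{-}=-\tfrac12 g+Sg=-g$, not $g$. So the exterior function with boundary values $g$ is $-\Phi$, which is the Cauchy integral taken with the exterior normal $-n$. Your sufficiency sentence contradicts your closing remark and should be rewritten to agree with it.

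The serious problem is your exterior Cauchy formula ($\Phi=-h$ outside, $\Phi=0$ inside). It is false for continuations into the domain named in the statement, $\mathbb{R}^{p+q+1}_*\backslash\Omega_D$. Theorem \ref{CPFG} is only available for domains inside $\mathbb{R}^{p+q+1}_*$. Yet $B_R\backslash\overline{\Omega_D}$ contains $\mathbb{R}^{p+1}$, where $h$ is not given and $\bar\vartheta$ is singular. In each slice the hyperplane $\{r=0\}$ splits the exterior into two half-spaces, and Stokes' theorem leaves an extra term on $\{r=0\}$. That term is weighted by the jump $2\underline\omega F_2(\boldsymbol{x}_p,0^{+})$ of $h$, which need not vanish.

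Here is a counterexample. Take $q\ge 2$, let $D$ be the union of the balls of radius $\tfrac12$ about $(\boldsymbol{0},\pm1)$, and put $h(\boldsymbol{x})=E(\underline\omega-\boldsymbol{x})$ for $\boldsymbol{x}=\boldsymbol{x}_p+r\underline\omega$ with $r>0$. Its stem function is $F_1=-\overline{\boldsymbol{x}_p}\,\rho^{-p-2}/\sigma_{p+1}$ and $F_2=(r-1)\rho^{-p-2}/\sigma_{p+1}$, where $\rho^2=|\boldsymbol{x}_p|^2+(r-1)^2$, extended to $r<0$ by parity. Then $h\in\ker\bar\vartheta$ on $\mathbb{R}^{p+q+1}_*\backslash\overline{\Omega_D}$ and $h$ vanishes at infinity, but $F_2(\boldsymbol{x}_p,0^{+})\neq 0$. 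In each slice, the boundary sphere about $\underline\omega$ contributes $0$ at points $\boldsymbol{z}$ of its own ball. The sphere about $-\underline\omega$ carries the data $E(-\underline\omega-\cdot)$ and contributes $-E(-\underline\omega-\boldsymbol{z})\neq 0$ at those points. Hence $\Phi\not\equiv 0$ in $\Omega_D$, so $\Phi^{+}\neq 0$ and $Sg\neq-\tfrac12 g$.

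There are two ways to repair this. You can require the exterior continuation to be generalized partial-slice monogenic on $\mathbb{R}^{p+q+1}\backslash\overline{\Omega_D}$, i.e.\ to have a stem function that is $C^1$ across $r=0$; then your slicewise $B_R$ argument with the classical monogenic exterior formula is valid. Alternatively, you can read the statement literally, as a claim about the boundary values of the Cauchy integral itself. Then both directions follow at once from Theorem \ref{Plemelj integral formula}, with the sign correction above, and no exterior Cauchy formula is needed. The paper's ``similarly'' hides the same difficulty.
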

\begin{proof}
	Let $f$ be the generalized partial-slice monogenic continuation into the domain $\Omega _D$ of the function $g$ given on $\partial \Omega _D$. From the Cauchy integral formula (\ref{CIF}), we know that
	\begin{align*}
		f\left( \boldsymbol{x} \right) =\int_{\partial \Omega _D}{K_{\boldsymbol{y}}\left( \boldsymbol{x} \right) n\left( \boldsymbol{y} \right) g\left( \boldsymbol{y} \right) d\sigma \left( \boldsymbol{y} \right)}.
	\end{align*}
	Therefore, the non-tangential boundary values of $f$ are $g$. Applying the Plemelj formula introduced in Theorem \ref{Plemelj integral formula}, we have
	\begin{align*}
		g\left( \boldsymbol{x} \right) =\frac{g\left( \boldsymbol{x} \right)}{2}+p.v.\int_{\partial \Omega _D}{K_{\boldsymbol{y}}\left( \boldsymbol{x} \right) n\left( \boldsymbol{y} \right) g\left( \boldsymbol{y} \right) d\sigma \left( \boldsymbol{y} \right)},\quad for\,\,all\,\,x\in \partial \Omega _D,
	\end{align*}
	which leads to 
	\begin{align*}
		p.v.\int_{\partial \Omega _D}{K_{\boldsymbol{y}}\left( \boldsymbol{x} \right) n\left( \boldsymbol{y} \right) g\left( \boldsymbol{y} \right) d\sigma \left( \boldsymbol{y} \right)}=\frac{g\left( \boldsymbol{x} \right)}{2},\quad for\,\,all\,\,x\in \partial \Omega _D.
	\end{align*}
	If vice versa, we have
	\begin{align*}
		p.v.\int_{\partial \Omega _D}{K_{\boldsymbol{y}}\left( \boldsymbol{x} \right) n\left( \boldsymbol{y} \right) g\left( \boldsymbol{y} \right) d\sigma \left( \boldsymbol{y} \right)}=\frac{g\left( \boldsymbol{x} \right)}{2},\quad for\,\,all\,\,x\in \partial \Omega _D.
	\end{align*}
	The Plemelj formula implies that 
	\begin{align*}
		\int_{\partial \Omega _D}{K_{\boldsymbol{y}}\left( \boldsymbol{x} \right) n\left( \boldsymbol{y} \right) g\left( \boldsymbol{y} \right) d\sigma \left( \boldsymbol{y} \right)}
	\end{align*}
	has the boundary value $g$. Therefore, it is the generalized partial-slice monogenic continuation of $g$ into $\Omega _D$. The proof for the exterior domain case can be obtained similarly. 
\end{proof}
Now, we introduce an integral operator as follows.
\begin{align*}
	\left( S_{\partial \Omega}u \right) \left( \boldsymbol{x} \right) :=2\int_{\partial \Omega _D}{K _{\boldsymbol{y}}\left( \boldsymbol{x} \right) n\left( \boldsymbol{y} \right) u\left( \boldsymbol{y} \right) d\sigma \left( \boldsymbol{y} \right)},\quad \boldsymbol{x}\in \Omega _D.
\end{align*}
Next, with the Plemelj-Sokhotski formula given above, one can easily obtain a result on generalized partial-slice monogenic continuation as follows.
\begin{corollary}\label{Cor}
	Given that $u$ is H\"older continuous on $\partial \Omega _D$, we obtain the algebraic identity $S_{\partial \Omega _D}^{2}u = Iu$, where $I$ denotes the identity operator.
\end{corollary}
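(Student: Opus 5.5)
We would prove $S_{\partial\Omega_D}^2 u = u$ the same way one proves it in classical Clifford analysis: use the Plemelj formula (Theorem \ref{Plemelj integral formula}) to split the boundary operator into the traces of an interior and an exterior generalized partial-slice monogenic function, and then apply Corollary \ref{corollary of  Plemelj formula} to each trace. Throughout, $S_{\partial\Omega_D}u(\boldsymbol{x})$ for $\boldsymbol{x}\in\partial\Omega_D$ is read as twice the principal value integral $p.v.\int_{\partial\Omega_D}K_{\boldsymbol{y}}(\boldsymbol{x})n(\boldsymbol{y})u(\boldsymbol{y})d\sigma(\boldsymbol{y})$.

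First, for H\"older continuous $u$ on $\partial\Omega_D$, define
\begin{align*}
\Phi(\boldsymbol{z})=\int_{\partial\Omega_D}K_{\boldsymbol{y}}(\boldsymbol{z})n(\boldsymbol{y})u(\boldsymbol{y})d\sigma(\boldsymbol{y}),\quad \boldsymbol{z}\in\mathbb{R}_*^{p+q+1}\setminus\partial\Omega_D .
\end{align*}
Let $\Phi^+$ denote its restriction to $\Omega_D$ and $\Phi^-$ its restriction to the complement. Both are generalized partial-slice monogenic, since the kernel $K_{\boldsymbol{y}}(\cdot)$ lies in $\ker\bar{\vartheta}$ away from the singular set; this follows slice-wise from $\mathcal{E}_{\boldsymbol{y}}$ being built from the Cauchy kernel $E$ via the representation formula. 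By Theorem \ref{Plemelj integral formula}, their non-tangential boundary values are
\begin{align*}
\Phi^{\pm}(\boldsymbol{x})=\pm\tfrac12 u(\boldsymbol{x})+\tfrac12 (S_{\partial\Omega_D}u)(\boldsymbol{x}),\quad \boldsymbol{x}\in\partial\Omega_D .
\end{align*}
Hence $u=\Phi^+-\Phi^-$ and $S_{\partial\Omega_D}u=\Phi^++\Phi^-$ on $\partial\Omega_D$.

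Second, we apply $S_{\partial\Omega_D}$ to $S_{\partial\Omega_D}u=\Phi^++\Phi^-$. Here $\Phi^+|_{\partial\Omega_D}$ is the trace of an interior generalized partial-slice monogenic function, so the first part of Corollary \ref{corollary of  Plemelj formula} gives $S_{\partial\Omega_D}\Phi^+=\Phi^+$. Likewise $\Phi^-|_{\partial\Omega_D}$ is the trace of an exterior one, so the second part gives $S_{\partial\Omega_D}\Phi^-=-\Phi^-$. By linearity,
\begin{align*}
S_{\partial\Omega_D}^2u=\Phi^+-\Phi^-=u .
\end{align*}

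The main obstacle is justifying that the boundary traces $\Phi^{\pm}|_{\partial\Omega_D}$ are again H\"older continuous and generalized partial-slice, so that Theorem \ref{Plemelj integral formula} and Corollary \ref{corollary of  Plemelj formula} can be applied a second time. The exterior case carries an additional requirement: $\Phi^-$ must decay at infinity, which is what the exterior Cauchy formula needs. For the H\"older regularity we would reduce slice-wise to $\Omega_{\underline{\omega}}\subset\mathbb{R}^{p+2}$ via the decomposition used in (\ref{integtal representation formula}). There the classical Privalov-type theorem shows that the Cauchy singular integral maps H\"older functions to H\"older functions. The slice structure of the traces then follows from the representation formula (Theorem \ref{*Representation Formula*}) applied to $\mathcal{E}_{\boldsymbol{y}}$. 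For the decay of $\Phi^-$ we would use the bound $|K_{\boldsymbol{y}}(\boldsymbol{z})|=O(|\boldsymbol{z}|^{-(p+1)})$ as $|\boldsymbol{z}|\to\infty$, uniformly for $\boldsymbol{y}$ in the compact set $\partial\Omega_D$.
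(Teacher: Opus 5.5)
Your strategy is the classical one: split $u=\Phi^+-\Phi^-$ and $S_{\partial\Omega_D}u=\Phi^++\Phi^-$ using Theorem \ref{Plemelj integral formula}, then obtain $S_{\partial\Omega_D}\Phi^{\pm}=\pm\Phi^{\pm}$ from Corollary \ref{corollary of  Plemelj formula}. This is what the paper intends; the paper gives no proof beyond saying the identity follows from the Plemelj--Sokhotski formula. There is, however, a genuine gap, and it is in your first step rather than in the regularity issues you list at the end. Theorem \ref{Plemelj integral formula} is stated only for H\"older continuous \emph{slice} functions. You apply it to $u$, which is only assumed H\"older continuous. You check slice structure for the traces $\Phi^{\pm}|_{\partial\Omega_D}$ but never for $u$.

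This is not a removable technicality. Since $\mathcal{E}_{\boldsymbol{y}}(\boldsymbol{x})$ is by construction a generalized partial-slice function of $\boldsymbol{x}$, so is $K_{\boldsymbol{y}}(\boldsymbol{x})n(\boldsymbol{y})u(\boldsymbol{y})$. Hence $S_{\partial\Omega_D}u$ is a slice function for \emph{every} $u$, and $S_{\partial\Omega_D}^2u=u$ forces $u$ to be slice. Concretely, redo the slice-wise computation from the proof of Theorem \ref{Plemelj integral formula} for a non-slice $u$. The jump contributed by the slice through $\underline{\omega}$ is then $\alpha u(\boldsymbol{x}_{\underline{\omega}})+\beta u(\boldsymbol{x}_{-\underline{\omega}})$, not $u(\boldsymbol{x})$. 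So $\Phi^+-\Phi^-=\pi u$, the average of these expressions over $\underline{\omega}$, and your argument actually proves $S_{\partial\Omega_D}^2u=\pi u$. Here $\pi u$ is always a slice function, and it equals $u$ exactly when $u$ is slice.

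So you must add, and use, the hypothesis that $u$ is a H\"older continuous generalized partial-slice function on $\partial\Omega_D$; this is presumably the paper's implicit assumption. With it, your argument goes through. The same hypothesis also removes most of the obstacle you identify. Write $u=U_1+\underline{\omega}U_2$ and move every factor $\underline{\omega}$ to the left, using $\underline{\omega}e_i=-e_i\underline{\omega}$ for $1\le i\le p$ and $\underline{\omega}^2=-1$. This shows that the slice integral $\int_{\partial\Omega_{\underline{\omega}}}\mathcal{E}_{\boldsymbol{y}}(\boldsymbol{x})n(\boldsymbol{y})u(\boldsymbol{y})\,d\sigma_{\underline{\omega}}(\boldsymbol{y})$ does not depend on $\underline{\omega}$. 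Hence the H\"older continuity and slice structure of $\Phi^{\pm}|_{\partial\Omega_D}$, and the decay of $\Phi^-$, reduce to the classical facts for the Cauchy integral over the single surface $\partial\Omega_{\underline{\omega}}\subset\mathbb{R}^{p+2}$. No uniformity in $\underline{\omega}$ then needs to be checked.
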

\par
Define the Plemelj projections $P_{\partial \Omega _D}$ and $Q_{\partial \Omega _D}$ as  
\begin{align*}
	P_{\partial \Omega _D} = \frac{1}{2}(I + S_{\partial \Omega _D}), \quad Q_{\partial \Omega _D} = \frac{1}{2}(I - S_{\partial \Omega _D}).  
\end{align*}
We will observe that $P_{\partial \Omega _D}$ projects onto the space of all defined functions. Then according to the conclusion of corollary \ref{Cor}, we can draw the following corollary.
\begin{corollary}
	The operators $P_{\partial \Omega _D}$ and $Q_{\partial \Omega _D}$ project onto spaces of functions. Specifically, $P_{\partial \Omega _D}$ projects onto functions defined on $\partial \Omega _D$ that are holomorphically continuable into $\Omega _D^{+}$. Conversely, $Q_{\partial \Omega _D}$ projects onto functions holomorphically continuable into $\Omega _D^{-}$ that vanish at $\infty$. These operators satisfy the following algebraic properties
\begin{align*}
	P_{\partial \Omega _D}^{2}=P_{\partial \Omega _D}, \quad Q_{\partial \Omega _D}^{2}=Q_{\partial \Omega _D},\quad P_{\partial \Omega _D}Q_{\partial \Omega _D}=Q_{\partial \Omega _D}P_{\partial \Omega _D}=0.
\end{align*}
\end{corollary}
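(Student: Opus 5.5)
The plan is to derive everything from the identity $S_{\partial \Omega _D}^{2}=I$ of Corollary \ref{Cor} by pure operator algebra. After that, the range statements are identified using Corollary \ref{corollary of  Plemelj formula} and the Plemelj formula (Theorem \ref{Plemelj integral formula}). Throughout, $u$ is a H\"older continuous function on $\partial \Omega _D$, so that $S_{\partial \Omega _D}u$ is defined as a principal value and is again H\"older continuous (standard for singular integrals with H\"older densities). Hence $S_{\partial \Omega _D}$ may be composed with itself.

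For the algebraic identities, I expand the definitions:
\begin{align*}
P_{\partial \Omega _D}^{2}=\frac{1}{4}\left( I+2S_{\partial \Omega _D}+S_{\partial \Omega _D}^{2} \right) =\frac{1}{4}\left( 2I+2S_{\partial \Omega _D} \right) =P_{\partial \Omega _D}.
\end{align*}
In the same way $Q_{\partial \Omega _D}^{2}=\frac{1}{4}( 2I-2S_{\partial \Omega _D}) =Q_{\partial \Omega _D}$. Since $I$ and $S_{\partial \Omega _D}$ commute,
\begin{align*}
P_{\partial \Omega _D}Q_{\partial \Omega _D}=Q_{\partial \Omega _D}P_{\partial \Omega _D}=\frac{1}{4}\left( I-S_{\partial \Omega _D}^{2} \right) =0.
\end{align*}
Also $P_{\partial \Omega _D}+Q_{\partial \Omega _D}=I$, so the two operators are complementary projections.

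For the range of $P_{\partial \Omega _D}$, set $g=P_{\partial \Omega _D}u$. Then $S_{\partial \Omega _D}g=\frac{1}{2}( S_{\partial \Omega _D}u+u) =g$. Written out, this says
\begin{align*}
p.v.\int_{\partial \Omega _D}K_{\boldsymbol{y}}\left( \boldsymbol{x} \right) n\left( \boldsymbol{y} \right) g\left( \boldsymbol{y} \right) d\sigma \left( \boldsymbol{y} \right) =\frac{g\left( \boldsymbol{x} \right)}{2},\quad \boldsymbol{x}\in \partial \Omega _D .
\end{align*}
By the first half of Corollary \ref{corollary of  Plemelj formula}, this is exactly the condition for $g$ to be the boundary value of the generalized partial-slice monogenic function $F_{\partial \Omega _D}g$ on $\Omega _D^{+}=\Omega _D$. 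Conversely, if $g$ extends in this way, the same corollary gives $S_{\partial \Omega _D}g=g$, hence $P_{\partial \Omega _D}g=g$. So the range of $P_{\partial \Omega _D}$ consists precisely of the continuable functions.

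For $Q_{\partial \Omega _D}$ the argument is symmetric. With $h=Q_{\partial \Omega _D}u$ one has $S_{\partial \Omega _D}h=-h$, and the second half of the corollary (the exterior case of Theorem \ref{Plemelj integral formula}) identifies $h$ as the boundary value of $F_{\partial \Omega _D}h$ on $\Omega _D^{-}=\mathbb{R}_*^{p+q+1}\setminus\overline{\Omega _D}$. Decay at infinity would come from the kernel: for $|\boldsymbol{x}|\to\infty$ with $\boldsymbol{y}$ on the compact set $\partial \Omega _D$, $E_{\boldsymbol{y}}(\boldsymbol{x}_{\pm\underline{\omega}})=O(|\boldsymbol{x}|^{-p-1})$, and the weight $|\underline{\boldsymbol{y}}_q|^{1-q}$ stays bounded because $\partial \Omega _D$ avoids $\mathbb{R}^{p+1}$.

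The step I expect to be the main obstacle is justifying that $S_{\partial \Omega _D}$ preserves H\"older continuity and that $S_{\partial \Omega _D}^{2}=I$ holds on the whole space on which the projections act. This requires the slice-by-slice reduction (\ref{Firstly}), applying the classical H\"older mapping property of the Cauchy singular integral on each $\partial \Omega _{\underline{\omega}}$, and uniform control of the constants in $\underline{\omega}\in\mathbb{S}^{+}$ so that the integrals over $\mathbb{S}^{+}$ can be interchanged. I would take Corollary \ref{Cor} as given and handle this point by citing the classical estimate.
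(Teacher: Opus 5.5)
Your proposal is correct and follows the paper's route. The paper's proof only asserts that the identities follow from the definitions (that is, from $S_{\partial \Omega _D}^{2}=I$ in Corollary~\ref{Cor}) and that the ranges are identified by the Plemelj--Sokhotski formula and its consequences (Corollary~\ref{corollary of  Plemelj formula}), which is exactly what you carry out in detail. Two small precisions: the operators should be taken on H\"older continuous \emph{slice} functions on $\partial \Omega _D$, because $F_{\partial \Omega _D}u$ is always a generalized partial-slice function and Theorem~\ref{Plemelj integral formula} is only proved in that setting (so $S_{\partial \Omega _D}^{2}=I$ cannot hold beyond it); and when $S_{\partial \Omega _D}h=-h$ the exterior case of Theorem~\ref{Plemelj integral formula} gives $F_{\partial \Omega _D}h$ the exterior trace $-h$, so the continuation of $h$ into $\Omega _D^{-}$ is $-F_{\partial \Omega _D}h$ rather than $F_{\partial \Omega _D}h$, which does not affect the conclusion.
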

\begin{proof}
	This conclusion is derived directly from the definition and is further substantiated by the Plemelj-Sokhotski formula along with its associated implications.
\end{proof}
\begin{theorem}[Hodge decomposition]
	Let $\Omega _D\subset \mathbb{R} _{*}^{p+q+1}$ be a bounded $p$-symmetric domain and $t>q$. Then, the space $\mathcal{L} ^t\left( \Omega _D \right)$ allows the orthogonal decomposition 
	\begin{align*}
		\mathcal{L} ^t\left( \Omega _D \right) =\mathcal{A} ^t\left( \Omega _D \right) \oplus \left( |\underline{\boldsymbol{y}}_q|^{1-q}\bar{\vartheta}\mathcal{L}_{0}^{t}\left( \Omega _D \right)  \right) 
	\end{align*}
	with respect to the $\mathbb{R} _{p+q}$-valued inner product given by
	\begin{align*}
		\langle f,g\rangle :=\int_{\Omega _D}{\overline{f\left( \boldsymbol{y} \right) }g\left( \boldsymbol{y} \right) d\sigma \left( \boldsymbol{y} \right)},\quad for\,\,all\,\,f,g\in L ^t\left( \Omega _D \right).
	\end{align*}
\end{theorem}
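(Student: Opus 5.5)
Following the classical Gürlebeck--Sprößig scheme, I will treat $\mathcal{L}^t(\Omega_D)$ as a closed subspace of $L^t(\Omega_D)$ (by the preceding proposition). Inside it, $\mathcal{A}^t(\Omega_D)$ is closed, and the aim is to show that the orthogonal complement of $\mathcal{A}^t(\Omega_D)$ with respect to $\langle\cdot,\cdot\rangle$ is exactly the closure of $|\underline{\boldsymbol{y}}_q|^{1-q}\bar{\vartheta}\mathcal{L}_0^t(\Omega_D)$. Here $\mathcal{L}_0^t$ denotes slice functions in the appropriate Sobolev class with vanishing trace on $\partial\Omega_D$, approximated by $C_0^\infty\cap\mathcal{GS}$. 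Since the pairing is between $L^t$ and $L^{t'}$, "orthogonal" is read as annihilation under this duality. The argument has three steps: both pieces lie in $\mathcal{L}^t$ and are orthogonal; they intersect trivially; and every element of $\mathcal{L}^t$ splits accordingly.

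\emph{Orthogonality.} Take $f\in\mathcal{A}^t(\Omega_D)$ and $u\in\mathcal{L}_0^t(\Omega_D)$, first with $u$ smooth and compactly supported. I write $d\sigma(\boldsymbol{y})=\tilde r^{q-1}d\sigma_{\underline{\omega}}\,dS_{\underline{\omega}}$, exactly as in the proof of Lemma \ref{lemma xpiT}. The weight $|\underline{\boldsymbol{y}}_q|^{1-q}$ then cancels the Jacobian, giving
\begin{align*}
\langle f,|\underline{\boldsymbol{y}}_q|^{1-q}\bar{\vartheta}u\rangle=\int_{\mathbb{S}^+}\int_{\Omega_{\underline{\omega}}}\overline{f}\,(D_{\underline{\omega}}u)\,d\sigma_{\underline{\omega}}\,dS_{\underline{\omega}}.
\end{align*}
This uses the proposition $\bar\vartheta=D_{\underline{\omega}}$ on each slice. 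On each slice I apply the Stokes theorem quoted before Theorem \ref{Theta T=2f}. The boundary term vanishes because $u$ has compact support. The remaining volume term contains $\overline{f}D_{\underline{\omega}}$, i.e. the conjugate of $D_{\underline{\omega}}f$ up to the usual convention, and this is zero because $f\in\ker\bar\vartheta$. Density then extends the identity to all of $\mathcal{L}_0^t$. The hypothesis $t>q$ enters here: it guarantees that $|\underline{\boldsymbol{y}}_q|^{1-q}\bar\vartheta u$ and the pairing remain integrable in the $p$-symmetric coordinates. I will verify this with Hölder's inequality against the factor $\tilde r^{q-1}$.

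\emph{Trivial intersection.} Suppose $g=|\underline{\boldsymbol{y}}_q|^{1-q}\bar\vartheta u\in\mathcal{A}^t$. Then $\bar\vartheta u=|\underline{\boldsymbol{y}}_q|^{q-1}g$, and orthogonality applied with $f=g$ gives $\langle g,g\rangle=0$. The scalar part of $\langle g,g\rangle$ is $\|g\|_{L^2}^2$, so $g=0$.

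\emph{Existence of the splitting.} This is the step I expect to be the main obstacle. Given $f\in\mathcal{L}^t$, I want $u\in\mathcal{L}_0^t$ with $f-|\underline{\boldsymbol{y}}_q|^{1-q}\bar\vartheta u\in\ker\bar\vartheta$. Using the Teodorescu transform is natural, since the theorem $\bar\vartheta_{\boldsymbol{x}}T_{\Omega_D}f=f$ shows that $T_{\Omega_D}$ inverts $\bar\vartheta$. My first attempt is the following. Set $w=T_{\Omega_D}(|\underline{\boldsymbol{y}}_q|^{q-1}f)$, and correct it by a solution $h$ of the boundary problem $\bar\vartheta h=0$ with $h=w$ on $\partial\Omega_D$. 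I would obtain $h$ from the Plemelj projections: $w|_{\partial\Omega_D}$ is decomposed via $P_{\partial\Omega_D}$ and $Q_{\partial\Omega_D}$ (Corollaries \ref{corollary of  Plemelj formula} and \ref{Cor}), and only the $P$-component is monogenically continuable. Where that fails, I would instead argue by duality: a bounded functional on $\mathcal{L}^t$ that annihilates $|\underline{\boldsymbol{y}}_q|^{1-q}\bar\vartheta\mathcal{L}_0^t$ is represented by some $g\in\mathcal{L}^{t'}$ with $\bar\vartheta g=0$ in the weak sense, and hence $g\in\mathcal{A}$. The Hahn--Banach theorem then gives density of $\mathcal{A}^t\oplus\overline{|\underline{\boldsymbol{y}}_q|^{1-q}\bar\vartheta\mathcal{L}_0^t}$. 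Closedness of the range comes from the $L^t$ boundedness of $T_{\Omega_D}$ and of its derivatives. The derivative bounds come from the derivative formulas for $\partial_{x_{p_i}}T_{\Omega_D}$ and $\partial_{x_{q_i}}T_{\Omega_D}$ derived after Lemma \ref{lemma xpiT}, which are Calderón--Zygmund type kernels plus multiples of $f$. Controlling the weight $|\underline{\boldsymbol{y}}_q|^{1-q}$ in these estimates is where $t>q$ must be used carefully.
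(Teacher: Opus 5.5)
Your proposal has a genuine gap at the existence of the splitting, which is exactly the step the paper's proof addresses. The paper tacitly takes $\mathcal{L}^t=\mathcal{A}^t\oplus(\mathcal{A}^t)^{\perp}$ and shows only that $(\mathcal{A}^t)^{\perp}\subseteq|\underline{\boldsymbol{y}}_q|^{1-q}\bar\vartheta\mathcal{L}^t_0$.

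Your first attempt cannot work for arbitrary $f\in\mathcal{L}^t(\Omega_D)$. Suppose some $h\in\ker\bar\vartheta$ had the same trace as $w=T_{\Omega_D}(|\underline{\boldsymbol{y}}_q|^{q-1}f)$. Then $w-h$ would have zero trace and $|\underline{\boldsymbol{y}}_q|^{1-q}\bar\vartheta(w-h)=f$, so every $f$ would lie in the second summand.

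The missing idea is to run this construction only for $f$ orthogonal to $\mathcal{A}^t$, and to turn that orthogonality into continuability of the trace. The paper pairs $f$ with $\phi_l=\sigma_{q-1}^{-1}\overline{\mathcal{E}_{\boldsymbol{y}}(\boldsymbol{x}_l)}$, whose poles $\boldsymbol{x}_l$ are dense in $\mathbb{R}^{p+q+1}_*\setminus\overline{\Omega_D}$. Integrating by parts slice by slice gives $F_{\partial\Omega_D}(\mathrm{tr}\,g)=0$ outside $\overline{\Omega_D}$. The exterior case of the Plemelj formula then yields $\mathrm{p.v.}\int_{\partial\Omega_D}K_{\boldsymbol{y}}(\boldsymbol{x})n(\boldsymbol{y})\,\mathrm{tr}\,g(\boldsymbol{y})\,d\sigma(\boldsymbol{y})=\frac12\mathrm{tr}\,g(\boldsymbol{x})$ on $\partial\Omega_D$, i.e.\ $Q_{\partial\Omega_D}(\mathrm{tr}\,g)=0$. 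The corollary to the Plemelj formula then produces $h$. You observe that only the $P$-component is continuable, but you never see that orthogonality to $\mathcal{A}^t$ is what kills the $Q$-component.

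The Hahn--Banach fallback does not close the gap. Density of the sum plus closedness of each summand does not make the sum closed; that requires an $L^t$-bounded projection onto $\mathcal{A}^t$, a Bergman-type theorem you do not supply. Also, for $t>2$ you cannot pass from ``$g\in L^{t'}$ annihilates $\mathcal{A}^t$'' to $g=0$ by pairing $g$ with itself, since $g$ need not lie in $L^2$.

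Your orthogonality step also fails, and not merely up to a convention. After Stokes the volume term is $-\int(\overline{f}D_{\underline{\omega}})u$, with $\overline{f}D_{\underline{\omega}}=\sum_{i=0}^p\partial_{x_i}\overline{f}\,e_i+\partial_r\overline{f}\,\underline{\omega}=\overline{\overline{D_{\underline{\omega}}}f}$, not $\overline{D_{\underline{\omega}}f}$. For $f\in\ker\bar\vartheta$ this equals $2\overline{\partial_{x_0}f}$.

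For example, $f=x_1-e_1x_0$ belongs to $\mathcal{A}^t(\Omega_D)$. Take a real $u\ge0$, $u\not\equiv0$, depending only on $(\boldsymbol{x}_p,|\underline{\boldsymbol{x}}_q|)$ and compactly supported in $\Omega_D$. Then $\langle f,|\underline{\boldsymbol{y}}_q|^{1-q}\bar\vartheta u\rangle=-2e_1\int_{\Omega_D}|\underline{\boldsymbol{y}}_q|^{1-q}u\,d\sigma\neq0$. So with $\bar\vartheta$ in both places the two summands are not orthogonal.

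Your integration by parts goes through only with the conjugate operator $\overline{D_{\boldsymbol{x}_p}}-\frac{\underline{\boldsymbol{x}}_q}{|\underline{\boldsymbol{x}}_q|^2}\mathbb{E}_{\underline{\boldsymbol{x}}_q}$ in the second summand, as in the classical $L^2=\ker D\oplus\overline{D}\,W^{1,2}_0$. For the same reason, the annihilator $g$ in your duality argument satisfies the conjugate equation rather than $\bar\vartheta g=0$. Your trivial-intersection argument rests on orthogonality, so it falls with it.

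The paper does not prove this inclusion at all. Its use of $\phi_l$ as an element of $\mathcal{A}^t$ carries the same mismatch: $\overline{\mathcal{E}_{\boldsymbol{y}}(\boldsymbol{x}_l)}$ is annihilated from the left by the conjugate slice operator $\overline{D_{\underline{\omega}}}$, not by $\bar\vartheta$.
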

\begin{proof}
	Define $X=\mathcal{L} ^t\left( \Omega _D \right) \ominus \mathcal{A} ^t\left( \Omega _D \right)$ as the orthogonal complement of the space $\mathcal{A} ^t\left( \Omega _D \right)$, with respect to the inner product $\langle\cdot ,\cdot \rangle$ that has been specified previously. Given any function $f\in X$, we have $|\underline{\boldsymbol{y}}_q|^{q-1}f\in L^t\left( \Omega _D \right) $, so that $g=T_{\Omega _D}\left( |\underline{\boldsymbol{y}}_q|^{q-1}f \right) \in L^{t}\left( \Omega _D \right) $ as well. Subsequently, we have $f\left( \boldsymbol{y} \right) =\frac{\bar{\vartheta}g\left( \boldsymbol{y} \right)}{|\underline{\boldsymbol{y}}_q|^{q-1}}$, and for any $\phi \in \mathcal{A} ^t\left( \Omega _D \right)$, we have
	\begin{align*}
		0=\int_{\Omega _D}{\overline{\phi \left( \boldsymbol{y} \right) }f\left( \boldsymbol{y} \right) d\sigma \left( \boldsymbol{y} \right)}=\int_{\Omega _D}{\overline{\phi \left( \boldsymbol{y} \right) }\frac{\bar{\vartheta}g\left( \boldsymbol{y} \right)}{|\underline{\boldsymbol{y}}_q|^{q-1}}d\sigma \left( \boldsymbol{y} \right)}.
	\end{align*}
	Specifically, consider the function $\phi _l\left( \boldsymbol{y} \right) =\frac{1}{\sigma _{q-1}}\overline{\mathcal{E} _{\boldsymbol{y}}\left( \boldsymbol{y}_l \right) }$, where the set of $\left\{ \boldsymbol{x}_l \right\}$ is dense in $\mathbb{R} _{*}^{p+q+1}\backslash \overline{\Omega _D}$. Clearly, we have $\overline{\phi _l\left( \boldsymbol{y} \right) }|\underline{\boldsymbol{y}}_q|^{1-q}=K_{\boldsymbol{y}}\left( \boldsymbol{x}_l \right) $, $\phi _l\left( \boldsymbol{y} \right) \in \mathcal{L} ^t\left( \Omega _D \right) $ and $\phi _l\left( \boldsymbol{y} \right) \overline{\vartheta }_{\boldsymbol{y}}=0$, where $\overline{\vartheta }_{\boldsymbol{y}}$ means that $\overline{\vartheta}$ is a differential operator with respect to $\boldsymbol{y}$. Subsequently, employing Gauss's theorem, we deduce that
	\begin{align*}
		0=&\int_{\Omega _D}{\overline{\phi _l\left( \boldsymbol{y} \right) }\frac{\bar{\vartheta}g\left( \boldsymbol{y} \right)}{|\underline{\boldsymbol{y}}_q|^{q-1}}d\sigma \left( \boldsymbol{y} \right)}\\
		=&\frac{1}{\sigma _{q-1}}\int_{\mathbb{S} ^+}{\int_{\Omega _{\underline{\omega }}}{\mathcal{E} _{\boldsymbol{y}}\left( \boldsymbol{x}_l \right) \frac{\bar{\vartheta}g\left( \boldsymbol{y} \right)}{|\underline{\boldsymbol{y}}_q|^{q-1}}d\sigma _{\underline{\omega }}\left( \boldsymbol{y} \right)}dS_{\underline{\omega}} \left( \boldsymbol{y}\right)}\\
		=&-\frac{1}{\sigma _{q-1}}\int_{\mathbb{S} ^+}{\bigg[ \int_{\Omega _{\underline{\omega}}}{\left( \mathcal{E} _{\boldsymbol{y}}\left( \boldsymbol{x}_l \right) \bar{\vartheta} \right) g\left( \boldsymbol{y} \right) |\underline{\boldsymbol{y}}_q|^{1-q}d\sigma _{\underline{\omega}}\left( \boldsymbol{y} \right)}}\\
		&-\int_{\partial \Omega _{\underline{\omega}}}{\mathcal{E} _{\boldsymbol{y}}\left( \boldsymbol{x}_l \right) n\left( \boldsymbol{y} \right) g\left( \boldsymbol{y} \right) |\underline{\boldsymbol{y}}_q|^{1-q}d\sigma _{\underline{\omega}}\left( \boldsymbol{y} \right)} \bigg]dS_{\underline{\omega}} \left( \boldsymbol{y}\right) \\
		=&\frac{1}{\sigma _{q-1}}\int_{\mathbb{S} ^+}{\int_{\partial \Omega _{\underline{\omega}}}{\mathcal{E} _{\boldsymbol{y}}\left( \boldsymbol{x}_l \right) n\left( \boldsymbol{y} \right) g\left( \boldsymbol{y} \right) |\underline{\boldsymbol{y}}_q|^{1-q}d\sigma _{\underline{\omega}}\left( \boldsymbol{y} \right)}dS_{\underline{\omega}} \left( \boldsymbol{y}\right)}\\
		=&F_{\partial \Omega _D}\left( trg \right) \left( \boldsymbol{x}_l \right),
	\end{align*}
	where $trg$ denotes the trace of $g$. Hence, due to continuity considerations, it follows that $F_{\partial \Omega _D}\left( trg \right) =0$ in $\mathbb{R} _{*}^{p+q+1}\backslash \overline{\Omega _D}$. Then, the Plemelj formula as stated in Theorem \ref{Plemelj integral formula} informs us that 
	\begin{align*}
		p.v.\int_{\partial \Omega _D}{K_{\boldsymbol{y}}\left( \boldsymbol{x} \right) n\left( \boldsymbol{y} \right) trg\left( \boldsymbol{y} \right) d\sigma \left( \boldsymbol{y} \right)}=\frac{trg\left( \boldsymbol{y} \right)}{2},\quad for\,\,all\,\,\boldsymbol{y}\in \partial \Omega _D.
	\end{align*}
	Hence, with Corollary \ref{corollary of  Plemelj formula}, the trace $trg$ can be  generalized partial-slice monogenicly extended into the domain $\Omega _D$. Here, we use $h$ to denote the continuation. Then, we have $tr_{\partial \Omega _D}g=tr_{\partial \Omega _D}h$ and the trace operator $tr_{\partial \Omega _D}$ describes the restriction onto the boundary $\partial \Omega _D$.\par
	Next, let $\omega :=g-h$, then we have $tr_{\partial \Omega _D}\omega =0$, in other words, $\omega \in W_{0}^{1,t}\left( \Omega _D \right)$. Further, we can see that
	\begin{align*}
		\bar{\vartheta}\omega =\bar{\vartheta}g=\bar{\vartheta}T_{\Omega _D}|\underline{\boldsymbol{y}}_q|^{q-1}f=|\underline{\boldsymbol{y}}_q|^{q-1}f\left( \boldsymbol{x} \right) .
	\end{align*}
	Indeed, since $f\in \mathcal{GS} \left( \Omega _D \right)$, we suppose $f$ is induced by the stem function $F=F_1+iF_2$, where $F_1$, $F_2$ satisfy the even-odd condition given in $(\ref{Stem Function Costituent})$. One can easily check that for $\boldsymbol{y}=\boldsymbol{y}_p+r\underline{\omega}$, $\boldsymbol{y}_{\diamond}=\boldsymbol{y}_p-r\underline{\omega }$ the functions $H_1$, $H_2$ defined by
	\begin{align*}
		H_1:=\left| \frac{\boldsymbol{y}-\boldsymbol{y}_{\diamond}}{2\underline{\omega}} \right|^{q-1}F_1,\quad H_2:=\left| \frac{\boldsymbol{y}-\boldsymbol{y}_{\diamond}}{2\underline{\omega}} \right|^{q-1}F_2,
	\end{align*}
	also satisfy the even-odd conditions. Further, $|\underline{\boldsymbol{y}}_q|^{q-1}f$ is induced by the stem function $H=H_1+iH_2$, which justifies that $\bar{\vartheta}\omega =\bar{\vartheta}g=|\underline{\boldsymbol{y}}_q|^{q-1}f\left( \boldsymbol{y} \right)\in \mathcal{GS} \left( \Omega _D \right)$ and this completes the proof.
\end{proof}   
\subsection*{Acknowledgments}
The work of Chao Ding is supported by National Natural Science Foundation of China (No. 12271001), Natural Science Foundation of Anhui Province (No. 2308085MA03) and Excellent University Research and Innovation Team in Anhui Province (No. 2024AH010002).


\subsection*{Data Availability}
No new data were created or analysed during this study. Data sharing is not applicable to this article.



\begin{thebibliography}{1}
	\bibitem{Br1} \textsc{F. Brack, R. Delanghe, F. Sommen}, \textit{Clifford Analysis}, (Research Notes in Mathematics 76), Pitman Books Ltd, London, 1982.
	
	\bibitem{Br2} \textsc{F. Brackx, R. Delanghe, F. Sommen}, \textit{Clifford analysis}, Research Notes in Mathematics, Vol. 76, Pitman, Boston, 1982.
	
	\bibitem{Co1} \textsc{F. Colombo, F. Sommen}, Distributions and the global operator of slice monogenic functions, \textit{Complex Anal. Oper. Theory} \textbf{8} (2014), no. 6, 1257-1268.
	
	\bibitem{Co2} \textsc{F. Colombo, I. Sabadini, D. C. Struppa}, Slice monogenic functions, \textit{Israel J.Math.} \textbf{171}(2009), 385-403.
	
	\bibitem{Co3} \textsc{F. Colombo, I. Sabadini, D. C. Struppa}, Entire Slice Regular Functions, \textit{SpringerBriefs in Mathematics}, Springer, Cham, (2016).
	
	\bibitem{Co4}\textsc{F. Colombo, I. Sabadini, D.C. Struppa}, \textit{Noncommutative Functional Calculus, Theory and Applications of Slice Hyperholomorphic Functions}, Progress in Mathematics 289, Birkh\"auser , 2011.
	
	\bibitem{Co5} \textsc{F. Colombo, I. Sabadini, D.C. Struppa}, An extension theorem for slice monogenic functions and some of its consequences, \textit{Israel J. Math.} \textbf{177}(2010), 369-489.
	
	\bibitem{Co6} \textsc{F. Colombo, J. O. Gonz\'alez-Cervantes, I. Sabadini}, \textit{A nonconstant coefficients differential operator associated to slice monogenic functions}, Trans. Amer. Math. Soc. 365 (2013), no. 1, 303-318.
	
	\bibitem{Co7} F. Colombo, J.O. Gonz\'alez-Cervantes, M.E. Luna-Elizarrar\' as, I.Sabadini, M. Shapiro, \emph{On Two Approaches to the Bergman Theory for Slice Regular Functions}, in \emph{Advances in Hypercomplex Analysis}, Springer INdAM Series, vol 1. Springer, Milano, 2013. 
	
	\bibitem{Co8} \textsc{F. Colombo, I. Sabadini, D. C. Struppa}, Michele Sce’s Works in Hypercomplex Analysis. A Translation with Commentaries, \textit{Birkh\"auser}, Basel (2020).
	
	\bibitem{De} R. Delanghe, F. Sommen, V. Sou\v cek: \emph{Clifford Analysis and Spinor Valued Functions}, Kluwer Academic Dordrecht, 1992.
	
	\bibitem{Di1} C. Ding, X.Q. Cheng: \emph{Integral formulas for slice Cauchy-Riemann operator and applications}, Adv. Appl. Clifford Algebras 34, 32 (2024).
	
	\bibitem{Di2} \textsc{P. A. M. Dirac}, The quantum theory of electron, \textit{Proc. Roy. Soc} \textbf{A117}, pp. 610-624, 1928.
	
	\bibitem{Fo} G. Folland: \emph{Real Analysis: Modern Techniques and Their Applications}, 2nd edition, Wiley, 2007.
	
	\bibitem{Fu} \textsc{R. Fueter}, Die Funktionentheorie der Differentialgleichungen $\Delta u=0$ und $\Delta\Delta u=0$ mit vier reellen Variablen, (German), \textit{Comment. Math. Helv.} \textbf{7} (1934), no. 1, 307-330.
	
	\bibitem{Gu} \textsc{K. G\"{u}rlebeck, K. Habetha, and W. Spr\"{o}ßig}, \textit{Holomorphic functions in the plane and n-dimensional space}, Birkh\"{a}user Verlag, Basel, 2008.
	
	\bibitem{Ge1} \textsc{G. Gentili and D. C. Struppa}, A new approach to Cullen-regular functions of a quaternionic variable, \textit{Comptes Rendus Math\'ematique Acad\'emie des Sciences}, \textbf{342}(2006), 741-744.
	
	\bibitem{Ge2} \textsc{G. Gentili, C. Stoppato, D. C. Struppa, F. Vlacci}, Recent developments for regular functions of a hypercomplex variable, in: \textit{Hypercomplex Analysis}, in: \textit{Trends Math.}, Birkh\"auser, Basel, 2009, pp. 165-186.
	
	\bibitem{Ge3} \textsc{G. Gentili, D. C. Struppa}, Regular functions on the space of Cayley numbers, \textit{Rocky Mt. J. Math.} \textbf{40} (2010), 225-241.
	
	\bibitem{Gh1}\textsc{R. Ghilnoi, A. Perotti}, Slice regular functions on real alternative algebras, \textit{Adv. Math.}, \textbf{226}(2011), 1662-1691.
	
	\bibitem{Gh2} \textsc{R. Ghiloni, A. Perotti}, Global differential equations for slice regular functions, \textit{Math. Nachr.} \textbf{287}(2014), 561-573.
	
	\bibitem{Hu} M. Hu, C. Ding, Y. Shen, J. Wang, \emph{Integral formulas and Teodorescu transform for generalized partial-slice monogenic functions}, arXiv:2502.20737, 2025.
	
	\bibitem{Ji} \textsc{M. Jin, G. Ren, I. Sabadini}, Slice Dirac operator over octonions, (2019), to appear in \textit{Israel J. Math.}, arXiv:1908.01383.
	
	\bibitem{Pe} A. Perotti: \emph{Almansi Theorem and Mean Value Formula for Quaternionic Slice regular Functions}, Adv. Appl. Clifford Algebras, Vol. 30, article number 61,2020.
	
	\bibitem{Ry} \textsc{J. Ryan}, Conformal Clifford manifolds arising in Clifford analysis, \textit{Proc. Roy. Irish Acad. Sect. A} \textbf{85} (1985), no. 1, 1-23.
	
	\bibitem{Ve} \textsc{I. N. Vekua}, \textit{Generalized Analytic Functions}, Oxford: Pergamon Press, 1962.
	
	\bibitem{Xu1}\textsc{Z. Xu,  I. Sabadini}, Generalized Partial-Slice Monogenic Functions: A Synthesis of Two Function Theories, \textit{Adv. Appl. Clifford Algebras} \textbf{34}(2024), article number 10.
	
	\bibitem{Xu2} \textsc{Z. Xu, I.Sabadini}, Generalized partial-slice monogenic functions, \textit{Trans. Amer. Math. Soc.} \textbf{378} (2025), pp. 851-883.
	\bibitem{Zh1} \textsc{Z. Zhang}, Integral Representations and its Applications in Clifford Analysis, \textit{General Mathematics} \textbf{13}, No. 3, pp. 81-98, 2005.
	
	\bibitem{Zh2} \textsc{Z. Zhang}, On k-regular functions with values in a universal Clifford algebra, \textit{Journal of Mathematical Analysis and Applications} \textbf{315}, pp. 491-505, 2006.
	
\end{thebibliography}

\end{document}